\def\moverlay{\mathpalette\mov@rlay}
\def\mov@rlay#1#2{\leavevmode\vtop{%
   \baselineskip\z@skip \lineskiplimit-\maxdimen
   \ialign{\hfil$\m@th#1##$\hfil\cr#2\crcr}}}
\newcommand{\charfusion}[3][\mathord]{
    #1{\ifx#1\mathop\vphantom{#2}\fi 
        \mathpalette\mov@rlay{#2\cr#3}
      }
    \ifx#1\mathop\expandafter\displaylimits\fi}
\newcommand{\cupdot}{\charfusion[\mathbin]{\cup}{\cdot}}
\crefname{hypothesis}{Hypothesis}{Hypotheses}
\colorlet{texcscolor}{blue!50!black}
\colorlet{texemcolor}{red!70!black}
\colorlet{texpreamble}{red!70!black}
\colorlet{codebackground}{black!25!white!25}
\lstdefinestyle{siamlatex}{%
  style=tcblatex,
  texcsstyle=*\color{texcscolor},
  texcsstyle=[2]\color{texemcolor},
  keywordstyle=[2]\color{texemcolor},
  moretexcs={cref,Cref,maketitle,mathcal,text,headers,email,url},
}
\DeclareTotalTCBox{\code}{ v O{} }
{ 
  fontupper=\ttfamily\color{black},
  nobeforeafter,
  tcbox raise base,
  colback=codebackground,colframe=white,
  top=0pt,bottom=0pt,left=0mm,right=0mm,
  leftrule=0pt,rightrule=0pt,toprule=0mm,bottomrule=0mm,
  boxsep=0.5mm,
  #2}{#1}
\theoremstyle{definition}
\title{Variational Path Optimization of Linear Pentapods  with a Simple Singularity Variety\thanks{Submitted to the editors DATE.
\funding{The first author was funded by the Doctoral College ``Computational Design" of TU Wien and the second author is supported by Grant No.~P~30855-N32 of the Austrian Science Fund FWF.}}}
\author{Arvin Rasoulzadeh\thanks{Center for Geometry and Computational Design,
TU Wien.\newline\ (\email{rasoulzadeh@geometrie.tuwien.ac.at}).}
\and Georg Nawratil\thanks{(\email{nawratil@geometrie.tuwien.ac.at}).}}
\begin{document}
\maketitle

\begin{tcbverbatimwrite}{tmp_\jobname_abstract.tex}
\begin{abstract}
The class of \emph{linear pentapod with a simple singularity variety} is obtained by imposing architectural restrictions on the design in such a way that the manipulator's singularity variety is linear in orientation/position variables. It turns out that such a simplification leads to crucial computational advantages while maintaining the machine's applications in some fundamental industrial tasks such as 5-axis milling and laser cutting. 
We assume  that a path between a given start- and end-pose of the end-effector is known, which is singularity-free and within the manipulator's workspace. 
An optimization process of this initial path is proposed in such a way that the parallel robot increases its distance to the singularity loci  
while the motion is being smoothed.
In our case the computation time of the optimization is improved as we are dealing with pentapods having simple singularity varieties allowing a closed form solution for the local extrema of the singularity-distance function. Formally, this process is called \emph{variational path optimization} which is the systematic optimization of a path by manipulating its \emph{variations of energy} and \emph{distance to the obstacle}, which in this case is the singularity variety. In this process some physical limits of the mechanical joints are also taken into account.
\end{abstract}

\begin{keywords}
  Linear pentapods, Singularity variety, Variations of energy, Non-linear optimization, Mechanical joints.
\end{keywords}

\begin{AMS}
  00A20 
\end{AMS}
\end{tcbverbatimwrite}
\input{tmp_\jobname_abstract.tex}
\begin{tabular}{ |p{0.7\columnwidth} p{0.2\textwidth}|}
 \hline
 \multicolumn{2}{|c|}{Nomenclature} \\
 \hline
 Name                                                                       & Symbol\\
 \hline
 Real numbers                                                               & $\mathbb{R}$\\
 Algebraic variety                                                          & $\mathbf{V}$\\
 Generalized cylinder                                                       & $\Gamma$\\
 Pentapod's singularity variety                                             & $\Sigma$\\
 Cost function                                                              & $\mathcal{C}$\\
 Objective function                                                         & $\mathcal{C}^{\prime}$\\
 Initial guess i-th coordinate                                              & $p_{i}$\\
 Pose i-th coordinate                                                       & $u_{i}$\\
 pedal points with respect to the breakpoint $p^i$ on $\Sigma$/$Q$-variety  & $q^{i}$/$\mathfrak{q}^{i}$\\
 Object-oriented metric                                                     & $\mathfrak{d}$\\
 Orthogonal projection with respect to object-oriented metric               & $\pi$\\
 Orthogonal projection with respect to Euclidean metric                     & $\mathrm{Pr}$\\
 Number of breakpoints                                                      & $n$\\
 Number of pedal points with respect to the breakpoint $p$             & $m_p$\\             
 \hline
\end{tabular}
\section{Introduction}
\label{sec:intro}
A \emph{linear pentapod} is a five degree-of-freedom parallel robot consisting of five identical spherical-prismatic-spherical legs, where five platform anchor points are aligned along the motion platform $\ell$. Note that the prismatic joints are active while the spherical ones are passive. Within the paper at hand we only meet this type of manipulators possessing a planar base; i.e.\  the base anchor points are located on a plane (cf.\ Fig.\,\ref{fig:general}). The pose of the line $\ell$ is uniquely characterized by a position vector $\overrightarrow{p}\in\mathbb{R}^3$ and an orientation unit-vector $\overrightarrow{i}\in\mathbb{R}^3$ along $\ell$. Additionally the platform anchor points' coordinate vectors are defined by the equation ${m}_{i}=\overrightarrow{p}+r_{i}\overrightarrow{i}$ while the coplanar base anchor points are free to acquire ${M}_{i}=(x_{i},y_{i},0)$ coordinates for $i=1,...,5$.

Linear pentapods are capable of performing some fundamental industrial tasks such as 5-axis milling and cutting. Few years ago, the company \emph{Metrom} developed such a machine which, with the aid of a rotary table, is capable of full 5-sided machining \cite{weck2002parallel}.

Kinematic singularities have an important role to play in path-planning. These singular configurations are critical poses occurred by gaining some uncontrollable degree-of-freedom or the loss of stiffness in certain directions. The actuator forces may also become very large and cause the breakdown of the system. 
Therefore it is of crucial importance in the path-planning process not only to avoid singularities \emph{but also their neighborhoods}. 
In this paper we are only dealing with linear pentapods possessing simple singularity varieties (i.e.\
they are linear in position/orientation variables), which were determined by the authors in \cite{rasoulzadeh2019linear}. 

Under the assumption that a singularity-free path between a given start- and end-pose of the end-effector is known within the manipulator's workspace, our goal is to 
reshape this initial path iteratively by increasing its distance to the singularity loci while smoothing the path. 
This so-called \emph{variational path optimization} does not only take the \emph{variations of energy} and distance to the singularity variety 
(with respect to a reasonable metric) into account, but also some physical limits of the mechanical joints.

The paper is organized as follows: First, a review on the previous works is given in Section \ref{sec:review}. Since the materials used here are heavily dependent on \cite{rasoulzadeh2019linear, rasoulzadeh2019linear2}, most of the review is associated with the findings regarding the class of  linear pentapods with simple singularity varieties and the computation of singularity-free balls. 
In the next step, the required \emph{algebraic geometric} and \emph{differential geometric} settings are given in Section \ref{sec:algebraicgeometricsetup} and \ref{sec:differentialgeometricsetup}, respectively. 
Section\,\ref{sec:joint} presents the optimization of the initial curve under some physical restrictions, caused by joint limits. After that the algorithmic structure of the variational path optimization is discussed in Section \ref{sec:algorithmdetails} and its results are presented in Section\,\ref{sec:results}.
In addition, a flowchart is provided in the Appendix by which the reader can track the detailed explanation of each section of the 
\emph{variational path optimization} algorithm.
\begin{figure}[t!] 
\begin{center}   
   \begin{overpic}[width=110mm]{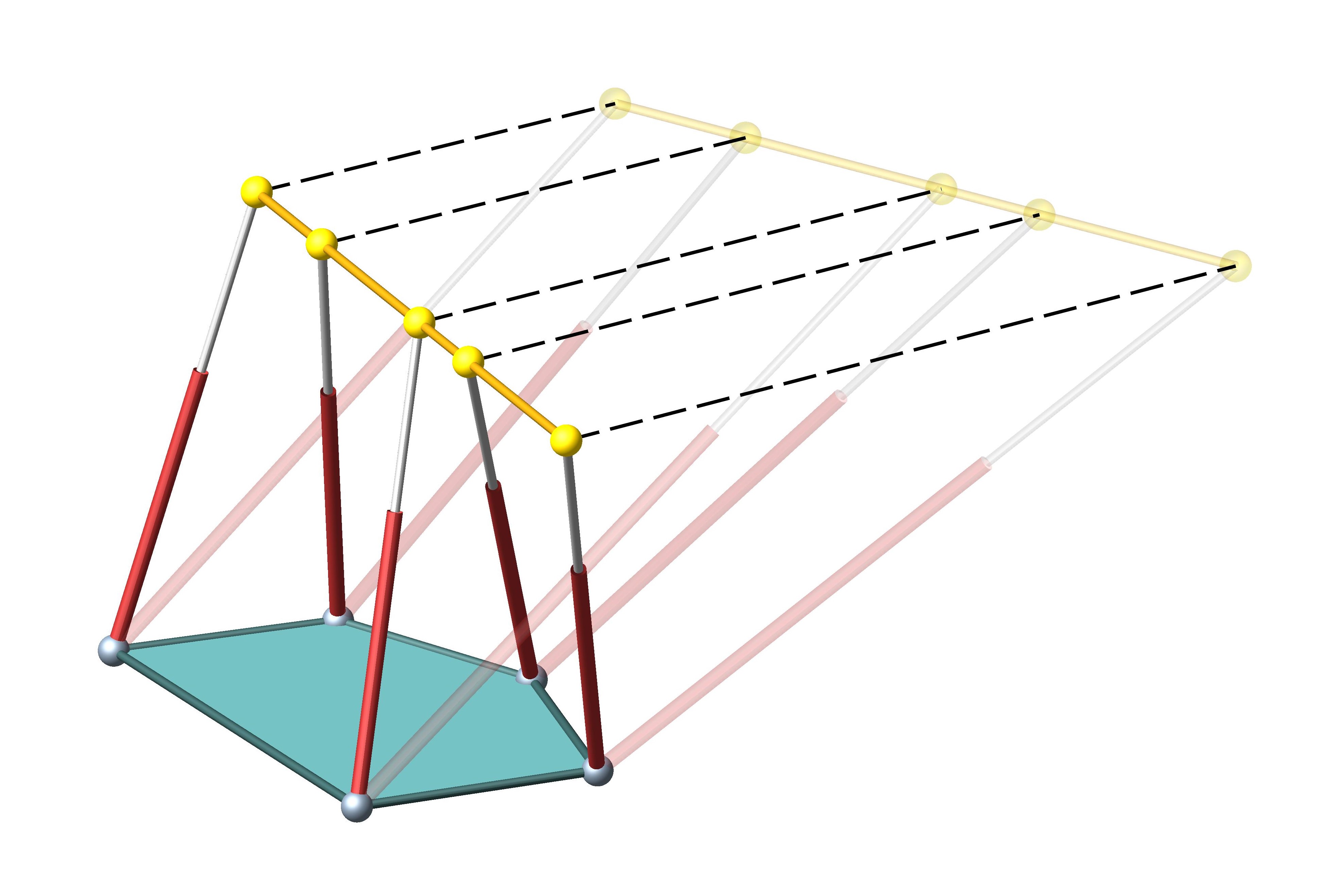}
	\begin{small}
   \put(22,49){\color{black}\line(-1,0){15}}
   \put(29,43){\color{black}\line(-1,0){22}}
   \put(33,40){\color{black}\line(-1,0){26}}
   \put(42,16){\color{black}\line(1,0){24}}
   \put(29,44.75){\color{black}\line(0,1){17}}
   \put(65,54.25){\color{black}\line(0,1){8}}
	\put(28.5,63){$\ell$}
   \put(64.5,63){$\ell^{\prime}$}
   \put(2,15){$M_1$}
   \put(12,53){$m_1$}
   \put(19,22){$M_2$}
   \put(2,49){$m_2$}
   \put(23,2){$M_3$}
   \put(2,43){$m_3$}
   \put(67,15){$M_4$}
   \put(2,40){$m_4$}
   \put(44,5){$M_5$}
   \put(42,37){$m_5$}
   \put(46.5,61.5){${m}_1^{\prime}$}
   \put(56,59){${m}_2^{\prime}$}
   \put(71,55){${m}_3^{\prime}$}
   \put(78,53){${m}_4^{\prime}$}
   \put(93,49){${m}_5^{\prime}$}
	\end{small}     
  \end{overpic} 
	\caption{ Illustration of a linear pentapod with a planar base. From a geometrical point of view, the value of Eq.\,\ref{distance}, for the two poses of the planar pentapod, is equal to $1/5$ of the sum of the squared lengths of the dashed lines between two platforms configurations.}
	\label{fig:general}
\end{center}
\end{figure} 
\subsection{Review}\label{sec:review}
From the line-geometric point of view (cf.\ \cite{merlet1989singular}) a linear pentapod is in a singular configuration if and only if 
the five carrier lines of the legs belong to a linear line congruence \cite{pottmann2009computational}; i.e. the Pl\"ucker coordinates of 
these lines are linearly dependent. From this latter characterization the following algebraic one can be obtained (cf.\ 
\cite{rasoulzadeh2018rational}):\\
There exists a bijection between the configuration space of a linear pentapod and all points 
$(u_1,u_2,u_3,u_4,u_5,u_6)\in\mathbb{R}^{6}$ located on the singular quadric $\Gamma: {u_1}^2+{u_2}^2+{u_3}^2=1$, where $\overrightarrow{i} = ({u_1},{u_2},{u_3})$ 
determines the orientation of the linear platform $\ell$ and $\overrightarrow{p} = (u_4,u_5,u_6)$ its position. 
Then the set of all singular robot configurations is obtained as the intersection of $\Gamma$ with a \emph{cubic} 5-dimensional variety $\Sigma$ of $\mathbb{R}^{6}$, 
which can be written as $\Sigma:\, det\left({S}\right)=0$ with 
\begin{eqnarray}
{S}=\left( \begin {array}{ccccccc} 1&u_1&u_2&u_3&u_4&u_5&u_6
\\ 0&u_4&u_5&u_6&0&0&0
\\ 0&0&0&0&u_1&u_2&u_3\\ r_{2}&x_{2}
&y_{2}& 0 &r_{2}x_{2}&r_{2}y_{2}&0
\\ r_{3}&x_{3}&y_{3}& 0 &r_{3}x_{3}&r_
{3}y_{3}& 0 \\ r_{4}&x_{4}&y_{4
}& 0 &r_{4}x_{4}&r_{4}y_{4}& 0
	\\ r_{5}&x_{5}&y_{5}& 0 &r_{5}x_{5}&r_
{5}y_{5}& 0 \end {array} \right), 
\label{BorrasMatrix}
\end{eqnarray}
(according to \cite{borras2010singularity})
under the assumption that $x_1=y_1=z_1=r_1=0$. Note that this assumption can always be considered without loss of generality as the fixed/moving frame can be chosen in a way that the first base/platform anchor point is its origin. Furthermore, to relax the computations even more, the authors proved that it is possible to assume $M_{1}=(0,0,0)$, $M_{2}=(x_{2},0,0)$ and $M_{3}=(x_{3},y_{3},0)$ where $x_{2}y_{3}\neq 0$ and $r_{2}=1$ hold \cite{rasoulzadeh2019linear}.\\
\begin{figure}[t!] 
\begin{center}   
  \begin{overpic}[height=35mm]{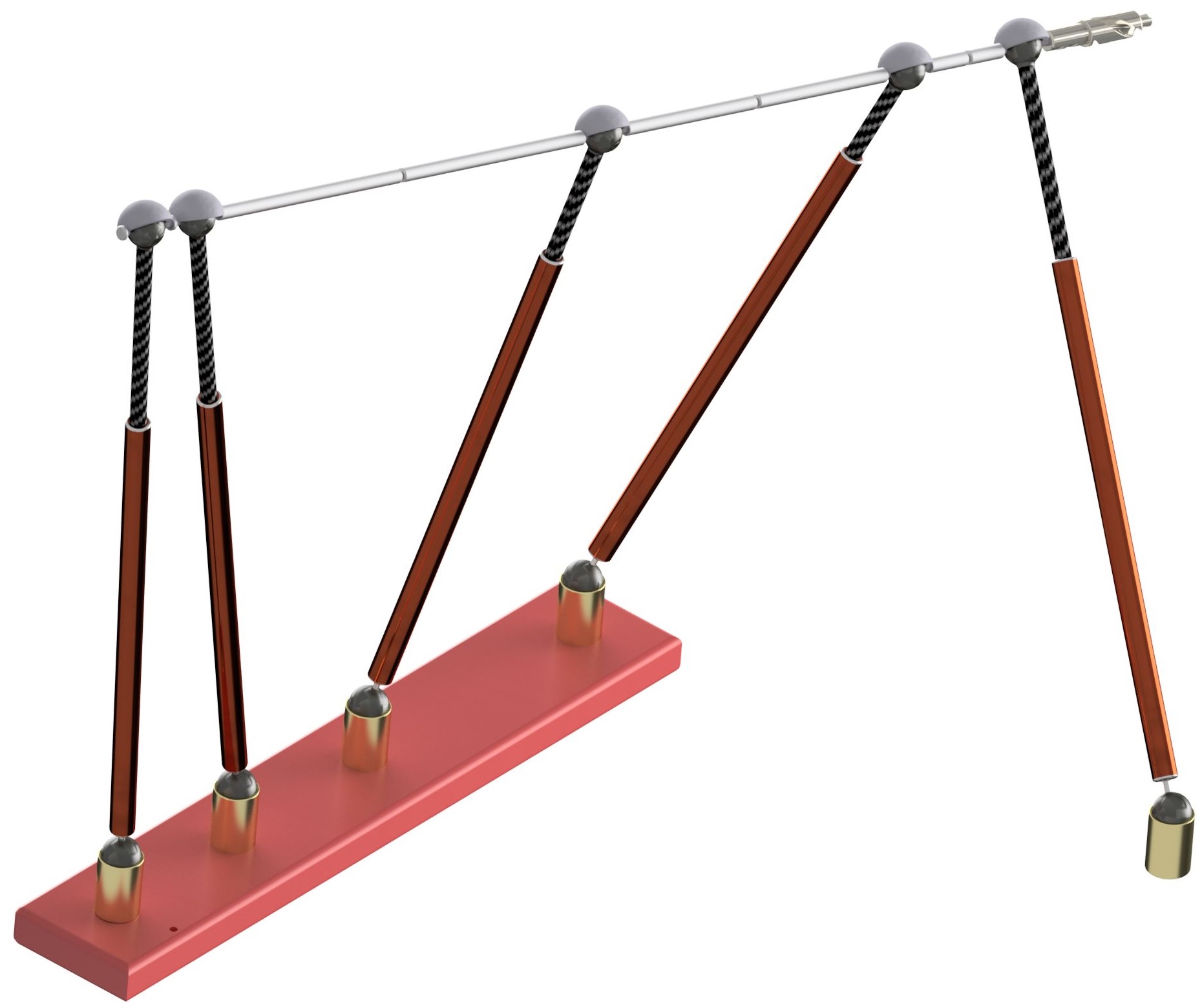}     
  \end{overpic}
   \hfill
  \begin{overpic}[height=36mm]{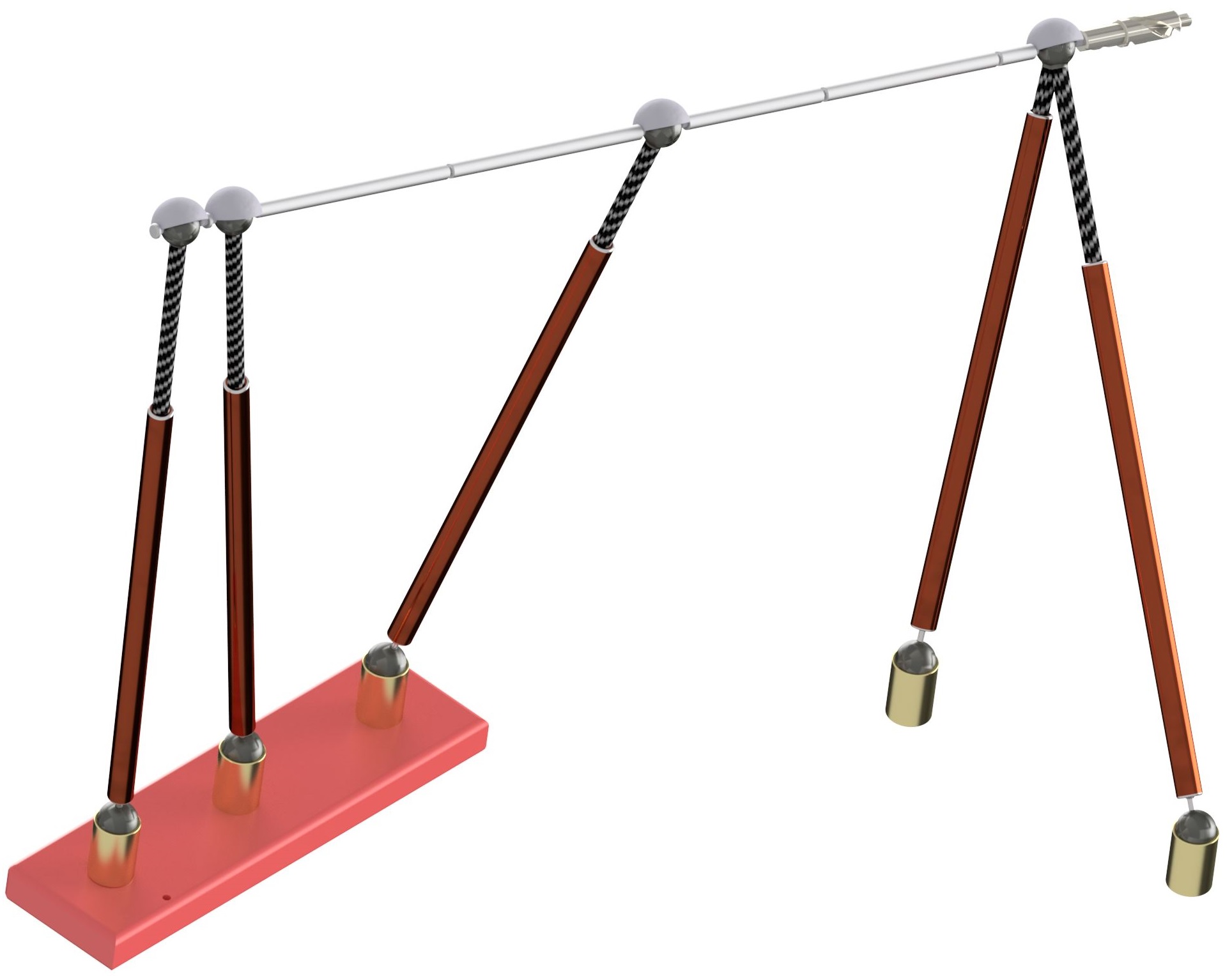}
	\begin{small}
	\end{small}     
  \end{overpic}
	\hfill
	\begin{overpic}[height=36mm]{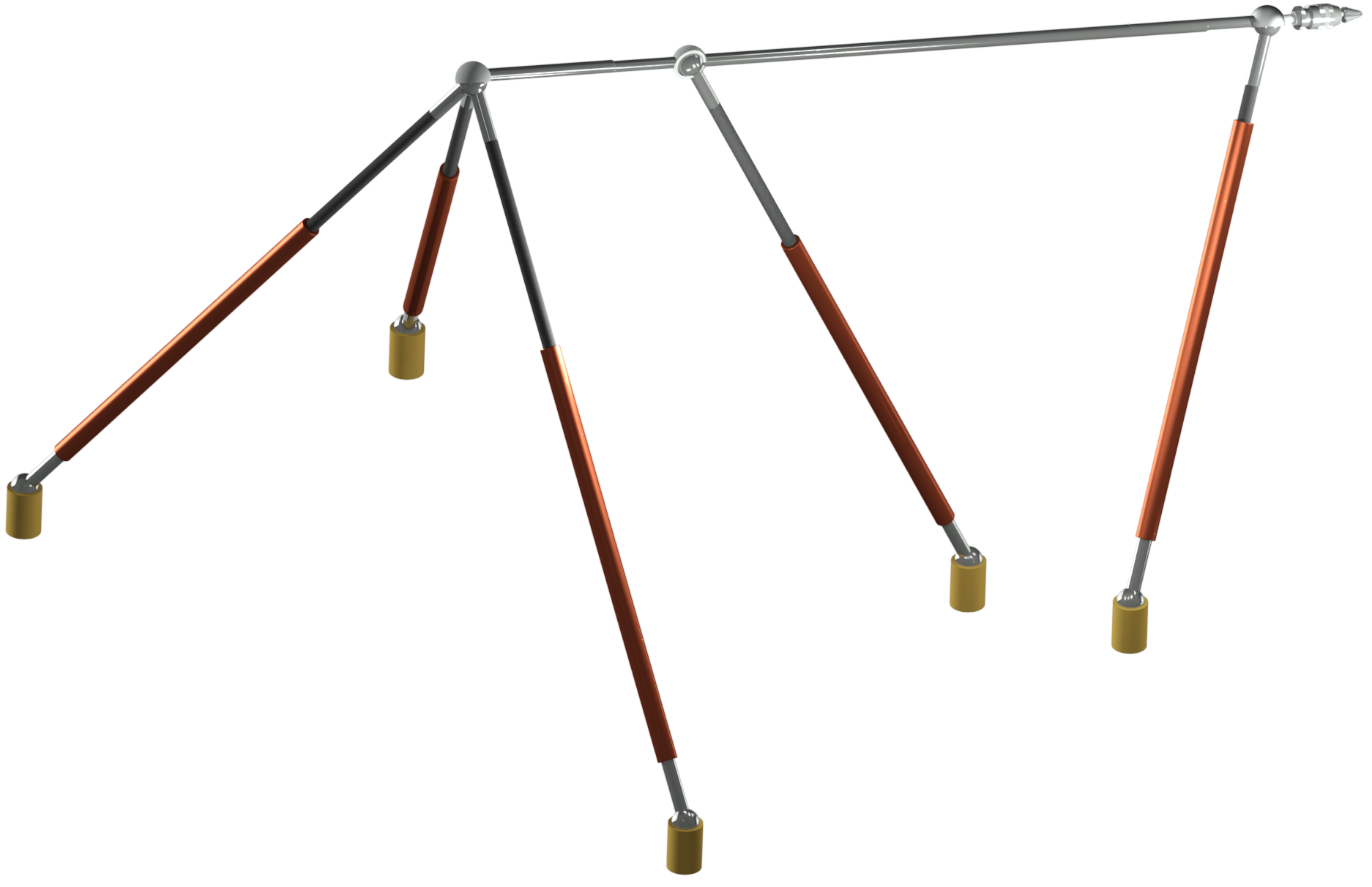}
	\begin{small}
	\end{small}     
  \end{overpic}
	\caption{Illustrations of LO-case pentapods. The geometric characterization of LO-cases is as follows:
1st-LO: $M_2$, $M_3$, $M_4$, $M_5$ are collinear (left), 
2nd-LO: $m_1=m_i$ and  $M_j$, $M_k$, $M_l$ are collinear with pairwise distinct $i,j,k,l\in\left\{2,3,4,5\right\}$ (middle),
3rd-LO: $m_1=m_i=m_j$ with pairwise distinct $i,j\in\left\{2,3,4,5\right\}$ (right).
}
	\label{fig:LO}
\end{center}
\end{figure}   
In \cite{rasoulzadeh2019linear}, the authors demonstrated the necessary and sufficient conditions for a linear pentapod to be linear in orientation/position variables. The linearity in orientation variables, which from now on is named \emph{LO-property} or \emph{LO-case}, 
is fulfilled for the following three types of linear pentapods which are depicted in Fig.\,\ref{fig:LO}. In fact, it is revealed that one can obtain all singular configurations of pentapods with LO-property in using one of the three classes, 
as they are equivalent with respect to $\Delta$-transforms mentioned in \cite{borras2010singularity1}.
For the linearity in position variables, the so-called \emph{LP-property} or \emph{LP-case}, 
authors demonstrated that a pentapod possesses this property iff there exists a \emph{singular affine map}\\
\begin{equation}
\kappa:\,\,(x_{i},y_{i}) \longmapsto r_{i}=\alpha x_{i} + \beta y_{i},
\end{equation}
with $x_{2} =\tfrac{1}{\alpha}$ and $i=1,\ldots , 5$.
\begin{definition}
A linear pentapod possessing a singularity variety linear in orientation/position variables is called a ``simple pentapod".
\label{def:simplepentapod}
\end{definition}  
It is noteworthy to emphasis that due to findings in \cite{rasoulzadeh2019linear}, simple pentapods must be with a planar base.\\
In \cite{rasoulzadeh2019linear2}, the computation of the maximum singularity-free balls for linear pentapods 
is thoroughly investigated. 
As the configuration space of the end-effector equals the space of oriented line-elements, 
we can adopt an object-oriented metric discussed in \cite{nawratil2017point} for our 
mechanical device as follows:
\begin{equation}\label{distance}
\mathfrak{d}(\ell, \mathcal{\ell^{'}})^{2}:=
\frac{1}{5}\sum_{j=1}^5{\|{m}_j-{m}^{'}_{j}\|}^{2},
\end{equation}
where $\ell$ and $\ell^{\prime}$ are two configurations and ${m}_j$ and ${m}^{'}_{j}$ denote the coordinate vectors of the corresponding platform anchor points (cf. Fig.\,\ref{fig:general}).\\
From \cite{rasoulzadeh2018rational} it was already known that the determination of the local extrema of the singularity-distance function is a polynomial problem of degree 80, which 
can be relaxed to a problem of degree 28 by expanding the transformation group from Euclidean to equiform motions by omitting the normalizing condition $\Gamma$. As the obtained distance of 
the relaxed problem is less or equal to the distance of the original problem, it can be used as the radius of a guaranteed singularity-free ball. 
Moreover in \cite{rasoulzadeh2019linear2} it was shown for the class of simple pentapods that the algebraic degree of this distance computation problem drops from 10  
to 3 (cf.\ Table\,\ref{table:pedals}). The resulting closed form solution offers interesting new concepts and strategies concerning path optimization and singularity avoidance. 
\begin{table}[t]
\centering
\begin{small}
{
 \begin{tabular}{||c |c| c| c||} 
 \hline 
                                               & Generic case  &  LP case  &  LO case \\ [0.5ex]
 \hline\hline
 Object-oriented metric case                                  &\color{red}80\color{black} &\color{red}10\color{black} &\color{red}10\color{black} \\[0.5ex]
 \hline
 Object-oriented metric case without normalizing condition    &\color{red}28\color{black} &\color{green}3\color{black} &\color{green}3\color{black}        \\ 
 \hline
\end{tabular}} \\[1ex]
\end{small}
\vspace{0.5	mm}
\caption{\color{black} Generic number of pedal points under different metric conditions: “Generic case" refers to the general pentapod (not necessarily with a simple singularity variety). Note that they are computed over the field of complex numbers and hence the real solutions might be lower. The red coloured cells indicate that these numbers are just experimental while the green ones are mathematically proven (cf. Theorem \ref{theorem:pedals}).}
\label{table:pedals}
\end{table}
\section{Algebraic Geometric Setup}
\label{sec:algebraicgeometricsetup}
Before plunging into the detailed optimization of the singularity-free paths, some remarks and definitions are of necessity to wrap up the mathematical structures of the pentapod's singularity locus. 
\subsection{$\Sigma$ - Variety}
\begin{definition}
The locus of the singularity polynomial of a ``simple pentapod" is called a ``$\Sigma$-variety".
\label{def:simple variety}
\end{definition}		
Now, tidying up the known information through definitions makes it possible to take a look at the structure of the $\Sigma$-varieties of both LO and LP-cases. 
\begin{lemma}
\label{lem:LO}
The $\Sigma$-variety of the LO-cases is the zero set of the following polynomial:
\begin{equation}
\Sigma : u_6\ \left[ u_6(\alpha u_1 + \beta u_2) - u_3 (\alpha u_4 + \beta u_5 -1) \right]=0,
\label{LO:sing} 
\end{equation} 
where $\alpha$ and $\beta$ are real numbers and $\alpha^2 + \beta^2 \neq 0$ (cf. \cite{rasoulzadeh2019linear}).
\end{lemma}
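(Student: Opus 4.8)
The plan is to evaluate the singularity determinant $\det(S)$ in closed form by exploiting the sparsity of $S$, and only then to read off what the LO-property forces. First I would observe that the two columns headed by $u_3$ and $u_6$ in the top row, namely columns $4$ and $7$, carry nonzero entries exclusively in the first three rows. A Laplace expansion along this pair of columns therefore reduces the $7\times 7$ determinant to just three products, indexed by the $2$-subsets of $\{1,2,3\}$. The $2\times 2$ minors drawn from columns $4,7$ evaluate to $-u_6^2$, $u_3^2$ and $u_3u_6$, and tracking the Laplace signs yields
\[
\det(S) = -u_6^2\,D_{12} - u_3^2\,D_{13} + u_3u_6\,D_{23},
\]
where $D_{12},D_{13},D_{23}$ are the complementary $5\times 5$ minors on columns $\{1,2,3,5,6\}$, each pairing the four architecture rows with the single top row ($3$, $2$, $1$ respectively) left unused by the corresponding $2\times 2$ block.

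Next I would expand every $D_{ab}$ along its lone architecture-free row, collapsing each to $4\times 4$ minors of the parameter block
\[
B=\bigl(r_j,\;x_j,\;y_j,\;r_jx_j,\;r_jy_j\bigr)_{j=2,3,4,5},
\]
whose columns I abbreviate $\mathsf r,\mathsf x,\mathsf y,\mathsf{rx},\mathsf{ry}$ and for which $[\,c_1c_2c_3c_4\,]$ denotes the minor on the four listed columns. Bookkeeping the cofactor signs gives $D_{12}=-u_1[\mathsf r\mathsf x\mathsf y\,\mathsf{ry}]+u_2[\mathsf r\mathsf x\mathsf y\,\mathsf{rx}]$, the orientation-free minor $D_{13}=-u_4[\mathsf r\mathsf y\,\mathsf{rx}\,\mathsf{ry}]+u_5[\mathsf r\mathsf x\,\mathsf{rx}\,\mathsf{ry}]$, and
\[
D_{23}=[\mathsf x\mathsf y\,\mathsf{rx}\,\mathsf{ry}]-u_1[\mathsf r\mathsf y\,\mathsf{rx}\,\mathsf{ry}]+u_2[\mathsf r\mathsf x\,\mathsf{rx}\,\mathsf{ry}]-u_4[\mathsf r\mathsf x\mathsf y\,\mathsf{ry}]+u_5[\mathsf r\mathsf x\mathsf y\,\mathsf{rx}].
\]

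The decisive step is to impose the LO-property, i.e.\ that $\det(S)$ be of degree one in the orientation coordinates $(u_1,u_2,u_3)$. In the expression above the only orientation-quadratic contributions are the term $-u_3^2D_{13}$ (carrying $u_3^2$) and the $u_1,u_2$-part of $u_3u_6\,D_{23}$ (carrying $u_3u_1,u_3u_2$), and both are governed by the very same pair of brackets. Hence the LO-property holds precisely when
\[
[\mathsf r\mathsf y\,\mathsf{rx}\,\mathsf{ry}]=0\qquad\text{and}\qquad[\mathsf r\mathsf x\,\mathsf{rx}\,\mathsf{ry}]=0,
\]
which I would identify as the algebraic signature of the LO-cases; each of the three geometric normal forms of Fig.\,\ref{fig:LO} realizes exactly these two vanishings, in agreement with their $\Delta$-equivalence.

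Imposing the two conditions annihilates $D_{13}$ and strips $D_{23}$ of its $u_1,u_2$ terms, leaving only the brackets $[\mathsf r\mathsf x\mathsf y\,\mathsf{ry}]$, $[\mathsf r\mathsf x\mathsf y\,\mathsf{rx}]$ and the constant $[\mathsf x\mathsf y\,\mathsf{rx}\,\mathsf{ry}]$. Dividing the singularity polynomial by the last (nonzero) bracket and setting
\[
\alpha:=\frac{[\mathsf r\mathsf x\mathsf y\,\mathsf{ry}]}{[\mathsf x\mathsf y\,\mathsf{rx}\,\mathsf{ry}]},\qquad \beta:=-\frac{[\mathsf r\mathsf x\mathsf y\,\mathsf{rx}]}{[\mathsf x\mathsf y\,\mathsf{rx}\,\mathsf{ry}]},
\]
the surviving terms collect into $u_6\bigl[u_6(\alpha u_1+\beta u_2)-u_3(\alpha u_4+\beta u_5-1)\bigr]$, which is the asserted polynomial; the nondegeneracy $\alpha^2+\beta^2\neq 0$ merely records that the two numerators do not vanish simultaneously, so that a genuine LO-configuration rather than a deeper degeneracy is at hand. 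I expect the main obstacle to be that middle step — carrying the cofactor signs correctly through the three $5\times 5$ expansions and confirming that one and the same pair of brackets kills every orientation-quadratic monomial; once that is granted, the factorization and the reading-off of $\alpha,\beta$ are purely formal.
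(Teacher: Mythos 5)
Your Laplace expansion is correct --- I verified the column-pair expansion, the three $2\times 2$ minors $-u_6^2,\;u_3^2,\;u_3u_6$ with their signs, the bracket forms of $D_{12},D_{13},D_{23}$, and the factorization once the two critical brackets vanish --- and your route is genuinely different from the paper's. The paper does not characterize the LO-property algebraically inside this proof at all: it invokes the $\Delta$-transform equivalence of the three classes of Fig.~\ref{fig:LO} (so that treating a single class suffices), substitutes the explicit architecture relations of the 3rd-LO case, $x_2:=1/\alpha$, $x_i:=(1-\beta y_i)/\alpha$, $r_j=r_k:=0$, into $\det(S)$ of Eq.~\ref{BorrasMatrix}, and factors. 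Your derivation instead recovers, from the sparsity of $S$, the algebraic conditions that the paper imports wholesale from \cite{rasoulzadeh2019linear}, and it produces closed formulas for $\alpha$ and $\beta$ as bracket ratios that the paper's parametrization leaves implicit. That buys self-containedness (and even a converse statement); the paper buys brevity by leaning on the prior classification.

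There is, however, a genuine gap: the parenthetical ``(nonzero)'' attached to $[\mathsf x\mathsf y\,\mathsf{rx}\,\mathsf{ry}]$ is doing real work and is not justified --- in fact it can fail for designs that satisfy both the geometric description of Fig.~\ref{fig:LO} and your two bracket conditions. Take $r_1=r_4=r_5=0$ (so $m_1=m_4=m_5$, a 3rd-LO design in the sense of the figure caption) with $M_1,M_2,M_3$ collinear. Your two critical brackets still vanish (rows $4$ and $5$ of both bracket matrices are proportional to the same coordinate vector --- incidentally, this one-line check is the verification, required by the lemma but only asserted in your write-up, that the geometric classes meet your conditions). Yet one computes
\begin{equation*}
[\mathsf x\mathsf y\,\mathsf{rx}\,\mathsf{ry}]=r_2r_3\,(x_2y_3-y_2x_3)(x_4y_5-y_4x_5)=0,
\end{equation*}
while $B:=[\mathsf r\mathsf x\mathsf y\,\mathsf{rx}]\neq 0$ generically, so that $\det(S)=-B\,u_6\,(u_2u_6-u_3u_5)$: a nonzero polynomial, still linear in the orientation variables, but \emph{not} of the form Eq.~\ref{LO:sing}, because the lone $u_3$-monomial (the ``$-1$'') has disappeared and no admissible pair $(\alpha,\beta)$ exists. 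So $\alpha^2+\beta^2\neq 0$ does not ``merely record'' that your two numerators avoid vanishing simultaneously; the vanishing of the denominator bracket is an independent degeneration that breaks the conclusion. To close the gap you must import the nondegeneracy under which the classes are actually parametrized in \cite{rasoulzadeh2019linear} --- for the 3rd-LO case, that $M_1$ and the base points of the two legs with $r\neq 0$ are not collinear --- and note that it forces your denominator bracket to be nonzero (under the standing normalization $x_2y_3\neq 0$ of Section~\ref{sec:review}, its vanishing would force $A=B=0$ and hence $\det(S)\equiv 0$, i.e.\ an architecturally singular design). This is precisely what the paper's substitution encodes: writing the line through those two base points as $\alpha x+\beta y=1$ is possible exactly when that line avoids $M_1$, so the paper's proof parametrizes only the nondegenerate designs and never meets the bad case.
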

\begin{proof}
It can be shown by a series of $\Delta$-transforms (cf. \cite{borras2010singularity1}), that the singularity loci of all three cases are identical. Now, by substituting the relations between architecture parameters of one of the cases (e.g. 3rd-case identified by pairwise distinct indices $i,j,k \in \{3,4,5\}$) into $\det\left({S}\right)$ (cf. Eq.\,\ref{BorrasMatrix}): 
\begin{equation}
x_{2}:=\frac{1}{\alpha},\ \ \ x_{i}:=\frac{1-\beta y_{i}}{\alpha},\ \ \ r_{j}=r_{k}:=0,
\end{equation}
one obtains Eq.\,\ref{LO:sing}. 
\end{proof}
\begin{lemma}
\label{lem:LP}
The $\Sigma$-variety of the LP-case is the zero set of the following polynomial:
\begin{equation}
\Sigma : u_3\ \left[u_6(\alpha u_1 + \beta u_2 -1) - u_3 (\alpha u_4 + \beta u_5) \right]=0.
\label{LP:sing}
\end{equation} 
\end{lemma}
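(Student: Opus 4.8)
The plan is to imitate the proof of Lemma~\ref{lem:LO}: substitute the architecture constraints that characterise the LP-property directly into $\det(S)$ of Eq.~\ref{BorrasMatrix} and read off the factorisation in Eq.~\ref{LP:sing}. By the LP-characterisation the anchor-point data obey the singular affine relation $r_i=\alpha x_i+\beta y_i$ for $i=1,\dots,5$ together with $x_2=\tfrac1\alpha$; combined with the standing normalisation $M_1=(0,0,0)$, $M_2=(x_2,0,0)$, $M_3=(x_3,y_3,0)$ this is consistent with $r_1=0$ and $r_2=\alpha x_2+\beta y_2=1$. I would therefore insert $r_i=\alpha x_i+\beta y_i$ into the four lower rows of $S$, each of the generic shape $(r_i,\,x_i,\,y_i,\,0,\,r_ix_i,\,r_iy_i,\,0)$, and compute the determinant.

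The computation is driven by two pieces of sparsity. First, the relation $r_i=\alpha x_i+\beta y_i$ makes the first column a linear combination of the second and third in the four lower rows, so the column operation $C_1\mapsto C_1-\alpha C_2-\beta C_3$ annihilates those four entries without changing the determinant, leaving in the first column only $1-\alpha u_1-\beta u_2$ (row $1$) and $-(\alpha u_4+\beta u_5)$ (row $2$). Second, columns $4$ and $7$ carry the pose entries $u_3,u_6$ exclusively in the top three rows and vanish on the four architecture rows. Expanding $\det(S)$ along the now-sparse first column splits it into two $6\times6$ minors weighted by $1-\alpha u_1-\beta u_2$ and $\alpha u_4+\beta u_5$; expanding each of these repeatedly along columns $4$ and $7$ collapses them onto multiples $\pm u_3u_6K$ and $\pm u_3^2K$ of the single $4\times4$ architecture determinant
\begin{equation*}
K:=\det\bigl[\,x_i,\;y_i,\;r_ix_i,\;r_iy_i\,\bigr]_{i=2,3,4,5}.
\end{equation*}
Collecting the two contributions yields $\det(S)=\pm\,u_3\,K\,\bigl[u_6(\alpha u_1+\beta u_2-1)-u_3(\alpha u_4+\beta u_5)\bigr]$, whose zero set is exactly Eq.~\ref{LP:sing}.

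I expect the main obstacles to be two. One is sign- and index-bookkeeping in the iterated cofactor expansion, which the orientation/position duality can cross-check: swapping $(u_1,u_2,u_3)\leftrightarrow(u_4,u_5,u_6)$ carries the LO polynomial of Eq.~\ref{LO:sing} into a scalar multiple of Eq.~\ref{LP:sing}, mirroring that in $S$ this swap permutes the column triples $\{2,3,4\}$ and $\{5,6,7\}$ and interchanges the LO constraint $r_j=r_k=0$ with the LP constraint $r_i=\alpha x_i+\beta y_i$. The second, and more essential, point is justifying that the extracted factor $K$ is nonzero, so that it may be discarded when passing to the zero set; a vanishing $K$ would force $\det(S)\equiv0$, i.e.\ an architecturally singular design, which is excluded here, and I would make this precise using the non-degeneracy hypotheses $x_2y_3\neq0$ and $\alpha^2+\beta^2\neq0$.
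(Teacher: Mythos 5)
Your proposal is correct and follows essentially the same route as the paper: the paper's proof of Lemma~\ref{lem:LP} consists precisely of substituting the singular affine relation $r_i=\alpha x_i+\beta y_i$ into $\det(S)$ and factorizing to obtain Eq.~\ref{LP:sing}. Your column operation $C_1\mapsto C_1-\alpha C_2-\beta C_3$ and the iterated cofactor expansion simply make that factorization explicit (indeed one gets $\det(S)=-u_3\,K\,\bigl[u_6(\alpha u_1+\beta u_2-1)-u_3(\alpha u_4+\beta u_5)\bigr]$), and your attention to the nonvanishing of the architecture factor $K$ is a point the paper leaves implicit.
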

\begin{proof}
From \cite{rasoulzadeh2019linear}, it is known that LP-case is generated whenever there is a \emph{singular affine map} $\kappa : (x_{i}, y_{i})\longrightarrow r_{i}$, where $r_{i}=\alpha x_{i}+\beta y_{i}$. Substituting this relation into $\det\left({S}\right)$ and factorizing, one obtains Eq.\,\ref{LP:sing}. 
\end{proof}
Later on, during the process of singularity-free path optimization, a projection of a point in $\mathbb{R}^6$ onto the variety is needed. Hence, a more detailed understanding of the $\Sigma$-variety is helpful. In the following theorem the properties of $\Sigma$-variety in the \emph{real space} are investigated.\\
\begin{theorem}
\label{theorem:properties}
The $\Sigma$-variety has the following properties: 
\begin{itemize}
\item[a)] $\Sigma$-variety is an algebraic variety formed by the union of a hyperplane $\Sigma_{1}$, and a hyperquadric $\Sigma_{2}$, in $\mathbb{R}^{6}$,
\item[b)] $\Sigma_2 = \Sigma_3 \cupdot M$, where $\Sigma_3$ is the set of singular points of $\Sigma_2$ and $M$ is a smooth manifold,
\item[c)] $\Sigma_{1}\cap\Sigma_{2}$ is a 4-dimensional algebraic variety, consisting of the union of two 4-planes $\mathcal{A}$ and $\mathcal{B}$,
\item[d)] $\Sigma_{1}$ is tangent to $\Sigma_{2}$ at a 3-dimensional smooth manifold $M^{\prime} \subset \Sigma_{1}\cap\Sigma_{2}$,
\item[e)] $\Sigma_3$ is a 2-plane and is contained in $\mathcal{A}\cap\mathcal{B} \subset \Sigma_{1}\cap\Sigma_{2}$,
\\
Finally, having in mind that $\mathbf{V}\left( f_1,\cdots,f_n\right)$ is the set of solutions to the system of polynomial equations $\langle f_1,\cdots,f_n\rangle$, the details are summarized below:
\begin{itemize}
\item{LO-case :}
    \subitem{$\Sigma_{1}=\mathbf{V}(u_6)$},
    \subitem{$\Sigma_{2}=\mathbf{V}\left( u_6(\alpha u_{1} + \beta u_{2}) - u_{3} (\alpha u_4 + \beta u_5 -1)\right)$},
    \subitem{$\Sigma_{3}=\mathbf{V}(\alpha u_{1} + \beta u_{2}, u_{3}, \alpha u_4 + \beta u_5-1, u_6)$}.
  \item{LP-case:}
    \subitem{$\Sigma_{1}=\mathbf{V}(u_{3})$},
    \subitem{$\Sigma_{2}=\mathbf{V}\left( u_6(\alpha u_{1} + \beta u_{2} -1) - u_{3} (\alpha u_4 + \beta u_5)\right)$},
    \subitem{$\Sigma_{3}=\mathbf{V}\left(\alpha u_{1} +\beta u_{2} -1, u_{3}, \alpha u_4+\beta u_5, u_6\right)$}.
\end{itemize}
\end{itemize}  
\end{theorem}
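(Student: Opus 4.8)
The plan is to argue directly from the explicit factorizations in Lemmas~\ref{lem:LO} and~\ref{lem:LP}, handling the two cases in parallel since they are structurally identical (the LP-case is obtained from the LO-case by interchanging the roles of $u_3$ and $u_6$ and relocating the constant term~$1$). Throughout I write $f_1$ for the linear factor and $f_2$ for the quadratic factor, so that $\Sigma_1=\mathbf{V}(f_1)$ and $\Sigma_2=\mathbf{V}(f_2)$. Part~(a) is then immediate: the zero set of a product $f_1 f_2$ is $\mathbf{V}(f_1)\cup\mathbf{V}(f_2)$, with $f_1$ linear (a hyperplane) and $f_2$ of degree two (a hyperquadric).

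For part~(b) I would compute $\nabla f_2$ explicitly. In the LO-case one finds $\partial f_2/\partial u_1=\alpha u_6$, $\partial f_2/\partial u_2=\beta u_6$, $\partial f_2/\partial u_4=-\alpha u_3$, $\partial f_2/\partial u_5=-\beta u_3$, $\partial f_2/\partial u_3=-(\alpha u_4+\beta u_5-1)$ and $\partial f_2/\partial u_6=\alpha u_1+\beta u_2$. The singular locus $\Sigma_3$ of $\Sigma_2$ is the vanishing set of $\nabla f_2$; invoking $\alpha^2+\beta^2\neq 0$, the first four partials force $u_6=0$ and $u_3=0$, while the last two supply the remaining equations, yielding exactly $\Sigma_3=\mathbf{V}(\alpha u_1+\beta u_2,\,u_3,\,\alpha u_4+\beta u_5-1,\,u_6)$. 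Any such point automatically satisfies $f_2=0$, so $\Sigma_3\subset\Sigma_2$; setting $M:=\Sigma_2\setminus\Sigma_3$ identifies $M$ with the smooth part of the quadric and produces the disjoint decomposition $\Sigma_2=\Sigma_3\cupdot M$.

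Part~(c) follows by restricting $f_2$ to $\Sigma_1$: substituting $u_6=0$ collapses the quadric to the reducible form $-u_3(\alpha u_4+\beta u_5-1)$, so $\Sigma_1\cap\Sigma_2$ splits into $\mathcal{A}=\mathbf{V}(u_6,u_3)$ and $\mathcal{B}=\mathbf{V}(u_6,\alpha u_4+\beta u_5-1)$, each cut out by two independent linear forms and hence a $4$-plane. For part~(d) I would impose the tangency condition, namely that the constant normal direction $e_6=(0,\ldots,0,1)$ of $\Sigma_1$ be parallel to $\nabla f_2$: along $u_6=0$ this forces $u_3=0$ and $\alpha u_4+\beta u_5-1=0$, while the surviving component $\alpha u_1+\beta u_2$ must be nonzero so that the point is a smooth point of $\Sigma_2$. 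The tangency locus is therefore $M'=\mathbf{V}(u_6,u_3,\alpha u_4+\beta u_5-1)\setminus\Sigma_3$, a $3$-dimensional smooth manifold lying inside $\mathcal{A}\cap\mathcal{B}$. Finally, part~(e) is read off from the computation in~(b): $\Sigma_3$ is defined by four independent linear equations, hence a $2$-plane, and since $\mathcal{A}\cap\mathcal{B}=\mathbf{V}(u_6,u_3,\alpha u_4+\beta u_5-1)$ differs from $\Sigma_3$ only by dropping the equation $\alpha u_1+\beta u_2=0$, the inclusion $\Sigma_3\subset\mathcal{A}\cap\mathcal{B}\subset\Sigma_1\cap\Sigma_2$ is clear.

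The computations are elementary linear algebra together with a single round of differentiation, so the only genuine subtlety lies in part~(d): one must check that the points where $\nabla f_2\parallel e_6$ are precisely the smooth points of $\Sigma_2$ lying on the $3$-plane $\mathcal{A}\cap\mathcal{B}$, and that excluding $\Sigma_3$ (where $\nabla f_2$ vanishes and the tangent hyperplane is undefined) leaves a genuinely smooth $3$-manifold $M'$ with $\mathcal{A}\cap\mathcal{B}=M'\cupdot\Sigma_3$. Keeping the bookkeeping of the inclusions $\Sigma_3\subset\mathcal{A}\cap\mathcal{B}\subset\Sigma_1\cap\Sigma_2$ and $M'\subset\mathcal{A}\cap\mathcal{B}$ consistent across both the LO- and the LP-case is the main place where care is required.
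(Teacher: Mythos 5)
Your proposal is correct and follows essentially the same route as the paper's proof in Appendix~\ref{appendix:A}: factorization for (a), vanishing of $\nabla f_2$ (using $\alpha^2+\beta^2\neq 0$) to get $\Sigma_3$ in (b), the substitution $u_6=0$ for (c), collinearity of the gradients for the tangency locus in (d), and linear algebra plus the inclusion of ideals for (e). The only difference is presentational: in (b) the paper exhibits the smooth part $M$ explicitly as the union of four graph charts $M_i=\mathrm{graph}(h_i)$ obtained by solving $f_2=0$ for one variable at a time, whereas you take $M:=\Sigma_2\setminus\Sigma_3$ and invoke the regular-point (implicit function theorem) criterion abstractly --- the same idea, which the paper simply carries out by hand.
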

\begin{proof}
Although the proof is not complex, it might be perceived to be distinct from the rest of the paper. Consequently the interested reader can find the proof and related details in Appendix\,\ref{appendix:A}. 
\end{proof}
In Fig.\,\ref{fig:sigma}, an imaginative geometric depiction of $\Sigma$-variety in $\mathbb{R}^6$ is described in such a way that the there is a correspondence between the co-dimension of its properties and of the properties listed in Theorem\,\ref{theorem:properties}.
\begin{figure}[t!] 
\begin{center}   
   \begin{overpic}[height=35mm]{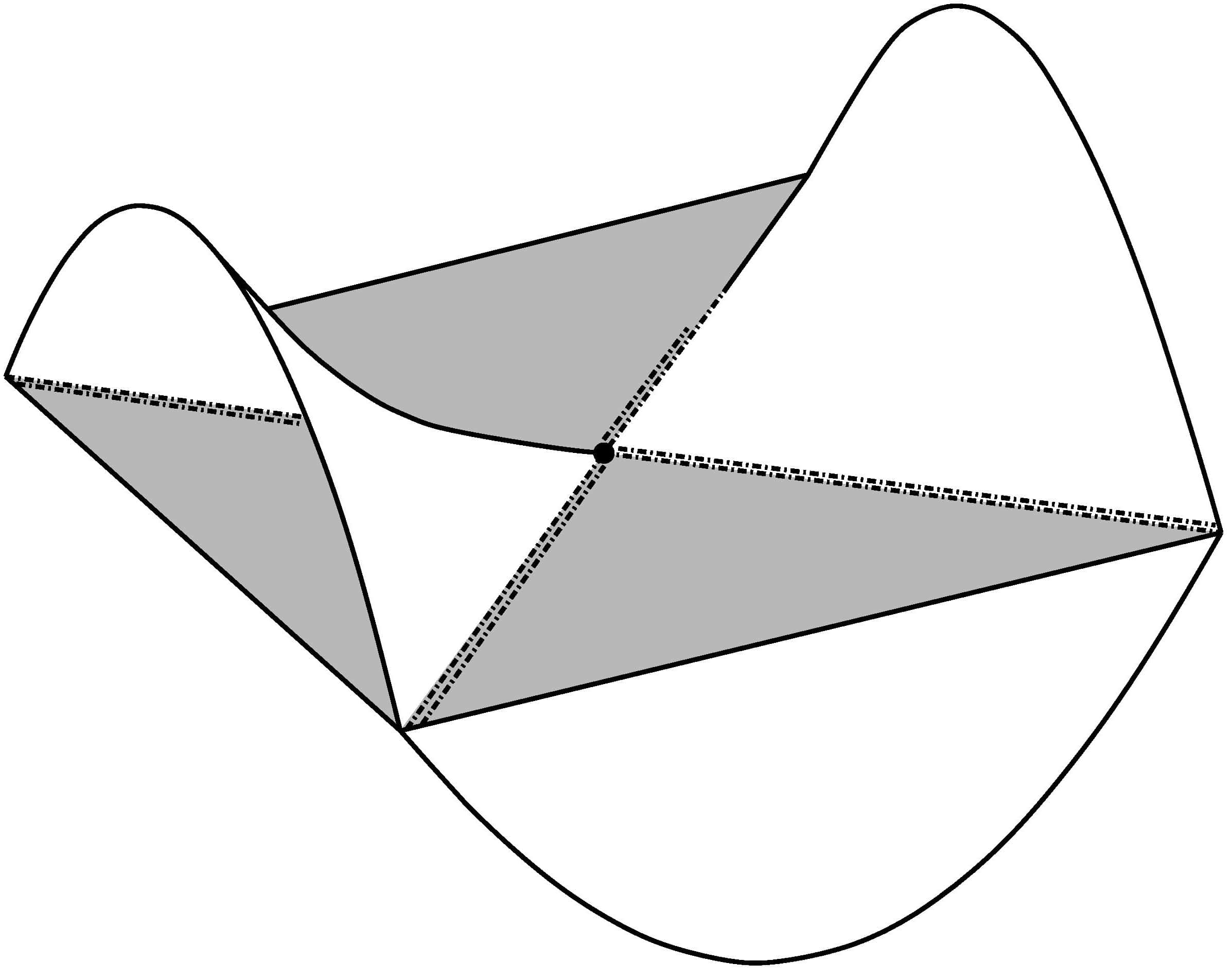}
	\begin{small}
   \put(88,50){\color{black}\line(1,0){30}}
   \put(88,50){\color{black}\line(0,-1){10}}
   \put(49,70){\color{black}\line(0,-1){24}}
   \put(47,72){$M^{\prime}$}
   \put(120,48){$\Sigma_1 \cap \Sigma_2$}
   \put(62,5){$\Sigma_2$}
   \put(-2,34){$\Sigma_1$}
   \put(9,36){\color{black}\line(1,0){10}}
   \end{small}     
   \end{overpic}
	\caption{One way to have an imaginative illustration of the $\Sigma$-variety is to consider a case with a certain similarity in $\mathbb{R}^3$. In order to do so, consider the variety $\mathbf{V}\left( z\,(x^2-y^2-z^2)\right)$ which is in fact the union of a plane $\Sigma_1$ and a hyperbolic paraboloid $\Sigma_2$. In that way, the $\Sigma_1 \cap \Sigma_2$ will be of co-dimension 2 while $\Sigma_1$ and $\Sigma_2$ are tangent at a 0 dimensional subvariety. Note that due to the fact that in this example we are restricted to $\mathbb{R}^3$ it is impossible to show the 2-dimensional singular set on $\Sigma_2$ as it is of co-dimension 4.}
	\label{fig:sigma}
\end{center}
\end{figure} 
%
%
\section{Differential Geometric Setup}
\label{sec:differentialgeometricsetup}
In this section, the \emph{nice motion} is revisited in a more rigorous format. Mathematically, the features of this motion are strongly connected to the concept of \emph{variations of energy} and consequently to the \emph{arc length} and \emph{curvature}. Since these topics are classified as differential geometry, a separate section is dedicated to them (as indeed we are viewing the problem from a different angle).\\
Evidently, to do differential geometric calculations an \emph{inner product} is required. Such an inner product should be in a way that one can derive kinematic information from. It turns out that it is possible to define a metric tensor (and consequently an inner product) in such a way that it implies the metric described in \cite{rasoulzadeh2018rational} (see Eq.\,\ref{distance}). Having access to the inner product it is possible to calculate the \emph{geodesic energy} (or simply just energy as it is more common in differential geometry texts) and \emph{bending energy}.  
\subsection{Metric Tensor}
Here, the goal is to frame-up the settings of differential geometry in order to be able to carry out the required computations on manifolds. The key element is to equip each tangent space (of the manifolds under study) with an \emph{inner product} that varies smoothly on the corresponding manifold. Such an inner product is obtained by computing the \emph{metric tensor} at one point of the manifold.  

\begin{definition}
The``object-oriented metric tensor" relative to the canonical basis of $\mathbb{R}^6$ is defined by the following matrix:    
\begin{equation}
g =\begin{small}\left( \begin {array}{cccccc} R& & &J& & \\ \noalign{\medskip} &R& & 
&J& \\ \noalign{\medskip} & &R& & &J\\ \noalign{\medskip}J& & &1& &
\\ \noalign{\medskip} &J& & &1& \\ \noalign{\medskip} & &J& & &1
\end {array} \right)_{6\times 6},\end{small}
\label{def:tensor}
\end{equation}
where $R=\frac{1}{5}(r_1^2+r_2^2 + r_3^2 +r_4^2 +r_5 ^2)$ and $J=\frac{1}{5}(r_1 + r_2 + r_3 + r_4 + r_5)$.
\label{def:metric:tensor}
\end{definition}
The following remark clarifies the relation between the object-oriented metric tensor and the object-oriented metric:
\begin{remark}
Consider the object-oriented metric $\mathfrak{d}$, Eq.\,\ref{distance}, and the object-oriented metric tensor $g$. For two points of $\mathbb{R}^6$ (two poses of the pentapod), namely, $u = \left( u_{1},u_{2},u_{3},u_{4},u_{5},u_{6}\right)$ and $p = \left( p_{1},p_{2},p_{3},p_{4},p_{5},p_{6}\right)$ one obtains (for more details cf.\,\cite{boothby1986introduction}): 
\begin{equation}
\mathfrak{d}(p,u)^{2}={C}^{T}.g.C,
\label{metric:metric-tensor}
\end{equation}
where ${C}^{T}=(u_1-p_1,u_2-p_2,u_3-p_3,u_4-p_4,u_5-p_5,u_6-p_6)^{T}$.
\label{remark:metric:metric-tensor}
\end{remark}
Note that the right hand side of Eq.\,\ref{metric:metric-tensor} serves as the desired \emph{inner product}:
\begin{gather}
\langle -, -\rangle : \mathbb{R}^{6}\times\mathbb{R}^{6}\longrightarrow \mathbb{R},\\
\left({X},{Y}\right)\longmapsto {X}^{T}.{g}.{Y},
\label{innerproduct}
\end{gather}
Then the pose space of the pentapod equipped with the inner product of Eq.\,\ref{innerproduct} forms the Riemannian manifold $\left( \mathbb{R}^6 , {g}\right)$.    
\subsection{Orthogonal Projection}
One of the fundamental tools in the variational path optimization approach is the \emph{orthogonal projection}.
\begin{definition}
%
Let $M$ be a non-empty subset of the metric space $\mathbb{R}^d$. The orthogonal projection into $M$, is the relation $\pi \subset \mathbb{R}^d \times M$ such that for each $p \in \mathbb{R}^d$ we have  
\begin{equation}
\pi\left( p\right) := \{ q \in M\ :\ \mathfrak{d}\left( p, q \right) = \mathcal{\varrho} \left( p , M \right) \}, 
\end{equation}
where $\mathfrak{d}$ is the metric of $\mathbb{R}^d$ and $\mathcal{\varrho} \left( p , M \right) := \inf_{q \in M} \mathfrak{d}(p , q)$.
\label{def:orthogonalprojection}
\end{definition}
The fact that $\pi$ is defined as a relation rather than a map is due to the fact that it might not be a function \cite{dudek1994nonlinear}.
\subsubsection{Computation of Pedal Points}
\label{section:pedal}
Definition\,\ref{def:orthogonalprojection} gives an intuition toward the nature of the orthogonal projection but does not provide a computational tool. In fact there can be several ways to calculate $\pi$ for a sample point $p\in\mathbb{R}^6$. But for reasons that will be cleared later the \emph{Lagrange multiplier method} is chosen. 
\begin{definition}
Consider the ``Lagrange equation" $L:= f + \lambda_1 \Phi_1 + ... + \lambda_n \Phi_n$, 
where  $f$ is the smooth ``distance function" on a manifold $N$. Moreover a certain subset $S$ of $N$ is given as the zero-set of the polynomials $\Phi_{1},\ldots,\Phi_{n}$ and their corresponding ``Lagrange multipliers" are denoted by $\lambda_1,\ldots,\lambda_n$. Then the solutions of the system of equations
\begin{equation}
  \nabla L = 0 \quad \text{and} \quad
  \Phi_i = 0 \quad \text{for} \quad i = 1, ... ,n
\label{definition:pedal}
\end{equation}
are called ``pedal points", as these points of $S$ to the given configuration ($N = \mathbb{R}^6$) of the linear pentapod cause local extrema of the distance function.
\label{def:pedal}
\end{definition}
Note that for $p\in\mathbb{R}^6$, the set $\pi\left( p \right)$ is a subset of \emph{pedal points}.\\
Previously the authors had announced that for an arbitrary point in $\mathbb{R}^6$, the number of corresponding pedal points on the $\Sigma$-variety is 3 (cf. \cite{rasoulzadeh2019linear2}). In this section, the goal is to obtain closed-form information on the procedure to compute these pedal points.
\begin{theorem}
Consider an arbitrary point $p$ in $\mathbb{R}^6$. Then with respect to the object-oriented metric, Eq.\,\ref{metric:metric-tensor}, the corresponding number of pedal points on the $\Sigma$-variety is up to 3 and the pedal point coordinates can be obtained in a closed form. 
\label{theorem:pedals}
\end{theorem}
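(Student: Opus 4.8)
The plan is to use the decomposition $\Sigma = \Sigma_1 \cup \Sigma_2$ established in Theorem~\ref{theorem:properties} and to locate the pedal points on the hyperplane $\Sigma_1$ and on the quadric $\Sigma_2$ separately; the points of $\Sigma_1 \cap \Sigma_2$ (which are exactly the singular points of $\Sigma$) are treated as a degenerate subcase. First I would note that for any non-degenerate architecture one has $R > J^2$ (it is essentially the variance of the $r_i$), so the metric tensor $g$ of Definition~\ref{def:metric:tensor} is positive definite and the squared distance $f(u) = C^T g\, C$ from the fixed point $p$ is strictly convex. On the hyperplane $\Sigma_1$ (given by $u_6 = 0$ in the LO-case and $u_3 = 0$ in the LP-case) the restriction of $f$ is therefore a strictly convex quadratic with a unique minimiser: the Lagrange system $\nabla f + \lambda_1 \nabla \Phi_1 = 0$, $\Phi_1 = 0$ collapses to a single linear solve and returns the $g$-orthogonal projection of $p$ in closed form. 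This contributes exactly one pedal point.

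For the quadric $\Sigma_2$, write $\Phi_2(u) = u^T A u + b^T u + c$ with $A$ symmetric. The stationarity condition $\nabla f + \lambda\, \nabla \Phi_2 = 0$ becomes the linear system $(g + \lambda A)\,u = g\,p - \tfrac{\lambda}{2}\,b$, so for generic $\lambda$ one has $u = u(\lambda) = (g + \lambda A)^{-1}\bigl(g p - \tfrac{\lambda}{2} b\bigr)$, a rational vector-valued function of $\lambda$. Reinserting $u(\lambda)$ into $\Phi_2 = 0$ and clearing the common denominator yields a univariate polynomial in $\lambda$ whose roots label the admissible multipliers, each root giving a pedal point by back-substitution.

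The heart of the argument is to show that this multiplier polynomial is quadratic. Its degree is a priori controlled by $\det(g + \lambda A)$, and since the quadratic part $A$ of $\Phi_2$ has rank $4$ this determinant is of degree $4$ in $\lambda$. The decisive structural observation is that $A$ has signature $(2,2)$: after the linear substitutions $P = \alpha u_1 + \beta u_2$ and $Q = \alpha u_4 + \beta u_5$ the quadric acquires the hyperbolic shape $u_6 P - u_3 Q$ modulo a linear term, i.e.\ the bilinear form $XY - ZW$. Moreover the block pattern of $g$ pairs $u_i$ with $u_{i+3}$ using identical coefficients, so the metric couples $P$ with $Q$ and $u_3$ with $u_6$. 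In these adapted coordinates the stationarity equations split into two $2 \times 2$ blocks, and the cancellation intrinsic to the hyperbolic form reduces the anticipated quartic to a quadratic — precisely as in the elementary model where minimising the $g$-distance to $XY = ZW$ produces a degree-two foot-of-normal equation. Hence $\Sigma_2$ supplies at most two pedal points, both obtainable in closed form via the quadratic formula.

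Adding the single pedal point on $\Sigma_1$ to the at most two on $\Sigma_2$ gives at most three pedal points on $\Sigma$, all expressible in closed form, which is the assertion; the phrase ``up to'' absorbs coincidences and non-real roots. I expect the main obstacle to be precisely the degree collapse of the previous paragraph: one must make the adapted change of coordinates explicit and verify that the determinantal denominator $\det(g + \lambda A)$ and the numerator obtained from $\Phi_2(u(\lambda))$ share a common factor of degree two, so that the genuine degree in $\lambda$ is $2$ rather than $4$.
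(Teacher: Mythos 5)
Your proposal follows the same route as the paper: split $\Sigma$ into the hyperplane $\Sigma_1$ and the quadric $\Sigma_2$ via Theorem~\ref{theorem:properties}, take the unique $g$-orthogonal projection onto $\Sigma_1$ as the first pedal point (your positivity remark, that $R-J^2$ is the variance of the $r_i$ and hence positive, is correct), and count the pedals on $\Sigma_2$ by the degree in $\lambda$ of the Lagrange eliminant. The paper's proof is exactly this, with the degree settled by actually carrying out the elimination: the six equations $\partial L/\partial u_i=0$ are linear in the pose variables, one solves them for $u(\lambda)$ and substitutes into $\partial L/\partial\lambda$, and the resulting univariate polynomial is quadratic.

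The one genuine gap is the step you explicitly defer: proving that the eliminant has degree $2$ rather than $4$. This is not a cosmetic detail; it is the entire content of ``at most two pedals on $\Sigma_2$'', and without it your argument only yields up to five pedal points in total. The good news is that your structural intuition can be completed, although the mechanism is not the numerator/denominator common factor you predict. Work in the adapted coordinates $P=\alpha u_1+\beta u_2$, $P^{\perp}=-\beta u_1+\alpha u_2$, $Q=\alpha u_4+\beta u_5$, $Q^{\perp}=-\beta u_4+\alpha u_5$, and translate so that the quadric becomes the homogeneous form $u_6P-u_3\tilde{Q}$ with $\tilde{Q}=Q-1$; the metric is translation invariant, and since the same linear substitution is applied to $(u_1,u_2)$ and to $(u_4,u_5)$, the block pairing of $g$ survives. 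Because $\Phi_2$ does not involve $P^{\perp},Q^{\perp}$, stationarity forces these two coordinates to coincide with those of $p$, and what remains is a $4\times 4$ linear system in $(P,\tilde{Q},u_3,u_6)$ of the form $(2g+\lambda A)u=2gp$, whose matrix has diagonal blocks $G$ (the $2\times 2$ metric block) and off-diagonal blocks $\pm\lambda K$ with $K$ skew-symmetric. Complexifying block-diagonalizes this matrix into $G\pm i\lambda K$, whence its determinant is $q(\lambda)^2$ for a quadratic $q$, its adjugate is divisible by $q$, the solution $u(\lambda)$ has entries of the form (degree $\le 1$ in $\lambda$)$/q(\lambda)$, and therefore $\Phi_2(u(\lambda))$ equals (degree $\le 2$ in $\lambda$)$/q(\lambda)^2$: the eliminant is genuinely quadratic, not quartic. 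With that verified, your count $1+2=3$ and the closed-form claim follow exactly as you state.
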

\begin{proof}
In Theorem \ref{theorem:properties}, it is shown that the $\Sigma$-variety decomposes into the hyperplane $\Sigma_1$ and the hyperquadric $\Sigma_2$. Trivially, one of the pedal points is located on $\Sigma_1$, namely, the \emph{closest} point (with respect to the distance function) to the arbitrary point $p$. Now, knowing this fact it is possible to focus on $\Sigma_{2}$ for retrieving the pedal points information.
Consider the following \emph{Lagrange equation}
\begin{equation}
L:= \mathfrak{d}^2\left( p, u\right) + \lambda . f
\label{eq:hyperquadricpedal}
\end{equation} 
where $\lambda$ is the \emph{Lagrange multiplier}, $f$ is the singularity polynomial of $\Sigma_2$ and $\mathfrak{d}$ is the \emph{object-oriented metric} describing the distance between the point ${p}$ and a sample point ${u}$ on $\Sigma_2$ with symbolic coordinates $(u_1,u_2,u_3,u_4,u_5,u_6)$. Then, computing 
\begin{equation}
I :=
\langle
 \frac{\partial L}{\partial u_1}, \frac{\partial L}{\partial u_2},\ldots , \frac{\partial L}{\partial u_6}, \frac{\partial L}{\partial \lambda}
 \rangle
\label{ideal}
\end{equation}
results in a system of polynomial equations, where ${\partial L}/{\partial u_1},\ldots,{\partial L}/{\partial u_6}$ are linear in the pose variables. These pose variables depend on $\lambda$. By solving them for $\lambda$ and substituting them in ${\partial L}/{\partial \lambda}$ one obtains a univariable polynomial quadratic in $\lambda$. Since this polynomial is of degree $2$ it shows that for the arbitrary point ${p}$ there will be at most $2$ pedal points on $\Sigma_2$. Finally, the pedal point coordinates are obtained through solving the quadratic polynomial for $\lambda$ and back-substituting its value.
\end{proof}
\subsubsection{Distance to $\Sigma$-variety \& Modified Orthogonal Projection}
\label{gradient:lines}
Obtaining the closed form pedal point coordinates, enables one to compute the vector field of normals to $\Sigma$-variety at these points. Assuming a pose point $p\in\mathbb{R}^6$, the \emph{natural} way of increasing distance to the $\Sigma$-variety is to move on the line spanned by $p$ and the closest pedal (from now on called \emph{gradient lines}). However, this natural primary idea needs to be modified as the following cases may occur:
\vspace*{2mm}
\begin{itemize}
\item[$\bullet$] \textbf{Pedal points on} $\Sigma_3$:\\
Despite the fact that, based on Definition\,\ref{def:orthogonalprojection}, there is no restriction for the closest point to any specific subset of the $\Sigma$-variety, if such a point is located on $\Sigma_3$, Definition\,\ref{def:pedal} does not reveal it. The reason behind it is that the Lagrange multiplier technique nourishes from the tangency of the \emph{distance function} and the \emph{constraint} at the pedals. In another word, such minimizer is found whenever the corresponding gradients of the distance function and the constraint are linearly dependent. On the other hand, at singular points of the constraint the gradient vanishes which results the incapability of the Lagrange multipliers in identifying pedals on $\Sigma_3$. One might try to overcome this hardship through redefining the pedal points as the solutions to the system of polynomials obtained by calculating the derivatives of the \emph{distance function} with respect to parameters of $\Sigma_2$. In such a method one obtains the pedals on $\Sigma_3$ at the cost of expensive symbolic computation of non-linear systems. Though this technique is not reasonable, resolving this issue is not hard as one can find the closest point on $\Sigma_3$ separately. Hence, from now on we loosely call this point a pedal point as well (for details of $\Sigma_3$ cf. Appendix\,\ref{appendix:A}).\\
\item[$\bullet$] \textbf{Pose points on cut locus}:\\
By Theorem\,\ref{theorem:pedals} and \emph{previous case}, we know that $p$ can be located on up to 3 gradient lines plus a line connecting $p$ to the pedal point on $\Sigma_3$. A natural question would be which line to go along with to increase the distance to $\Sigma$. One idea would be to consider taxi along the the lines connecting pose point to the \emph{closest} pedal. But if the pose point is located on the \emph{cut locus} of $\Sigma$ then more than one closest point already exist (for definition and properties of cut locus see \cite{carmo1992riemannian}).
On the other hand, as pose point updating is discrete, with respect to a step-size, it may create problems such as overshooting over the cut locus. Such overshooting happens due to a ``jump" of the corresponding closest pedal. In order to resolve both issues at once and create a smooth update of the pose point, a \emph{weight} factor is introduced in such a way that it considers the effect of the pedal points in dependence of their distances to the pose $p$.  
\end{itemize}
\vspace*{2mm}
Due to the reasons mentioned above, we modify Definition\,\ref{def:orthogonalprojection} into the following shape:
\begin{definition}
The orthogonal projection with respect to object-oriented metric is a relation $\pi\subset\Sigma\times\mathbb{R}^6$, such that for each $p\in\mathbb{R}^6$ we have
\begin{equation}
\pi\left( p\right) := \{ \textit{closest point on}\ \Sigma_1 \} \cup \mathbf{V}\left( I \right) \cup \{ \textit{closest point on}\ \Sigma_3 \}, 
\label{eq:modifiedorthogonal}
\end{equation}
where $I$ is Eq.\,\ref{ideal}.
\label{def:modifiedorthogonal}
\end{definition}
Note that in a generic case, $\pi\left( p\right)$ is finite and yields four pedal points (one on $\Sigma_1$, two on $\Sigma_2$ and one on $\Sigma_3$).
\begin{remark}
Note that, it is not immediately known whether the non-real complex pedal points or double pedal points exist or not. Calculating Eq.\,\ref{ideal} for the case of a \emph{parabola} as an obstacle shows that the pedal points are not always distinct real points. 
\label{remark:pedals}
\end{remark}
\begin{definition}
Assume $p \in \mathbb{R}^6$ and $\pi$ to be an orthogonal projection relation mentioned above. Additionally, assume $m_p$ to be the number of pedal points on $\Sigma$ with respect to the point $p$. Now, the goal is to find a preferred direction such that if point $p$ travels along with, it can increase its distance to the $\Sigma$-variety. Assuming $u$ to be the update of $p$, the following function fulfils the goal:   
\begin{equation}
\mathcal{D}\left(p, u\right) := -\sum_{i=1}^{m_p} w_i \langle\,\frac{p - {\pi_{i}\left( p \right)}}{\| p - {\pi_{i}\left( p \right)}\|} , u-p\,\rangle.
\label{cost:1}
\end{equation}
where the $w_i := H / d\left(p, q_i\right)$ are the ``positive weights" and $H$ given by
\begin{equation}
H:= \frac{\prod_{i=1}^{m_p} \mathfrak{d}\left(p, q_{i}\right)}{\sum_{j=1}^{m_p}\prod_{k\neq j} \mathfrak{d}\left(p, q_{k}\right)}
\label{eq:H}
\end{equation}  
\label{def:weight} 
\end{definition}
is the ``regulation factor" implying $\sum_{i=1}^{m_p}w_i = 1$.\\
Eq.\,\ref{cost:1} refers to a weighted projection of vector $\overrightarrow{pu}$ on the three gradient lines plus the line connecting $p$ to the closest pedal on $\Sigma_3$. As it can be inferred from Eq.\,\ref{eq:H}, these weights are disproportional to the corresponding distance to pedals and their sum is equal to one. In another word, if $p$ is far away from a pedal then the projection of movement direction, $\overrightarrow{pu}$, on that gradient is a smaller vector (going away from that pedal is less important). On the contrary, if $p$ is close to a pedal then the projection of $\overrightarrow{pu}$ would be mainly along that gradient (it will repel the point $p$ with more efficiency). In Fig.\,\ref{fig:threepics}-left an example regarding such weighted factors is depicted.\\
\begin{figure}[t!] 
\begin{center}
   \begin{overpic}[height=60mm]{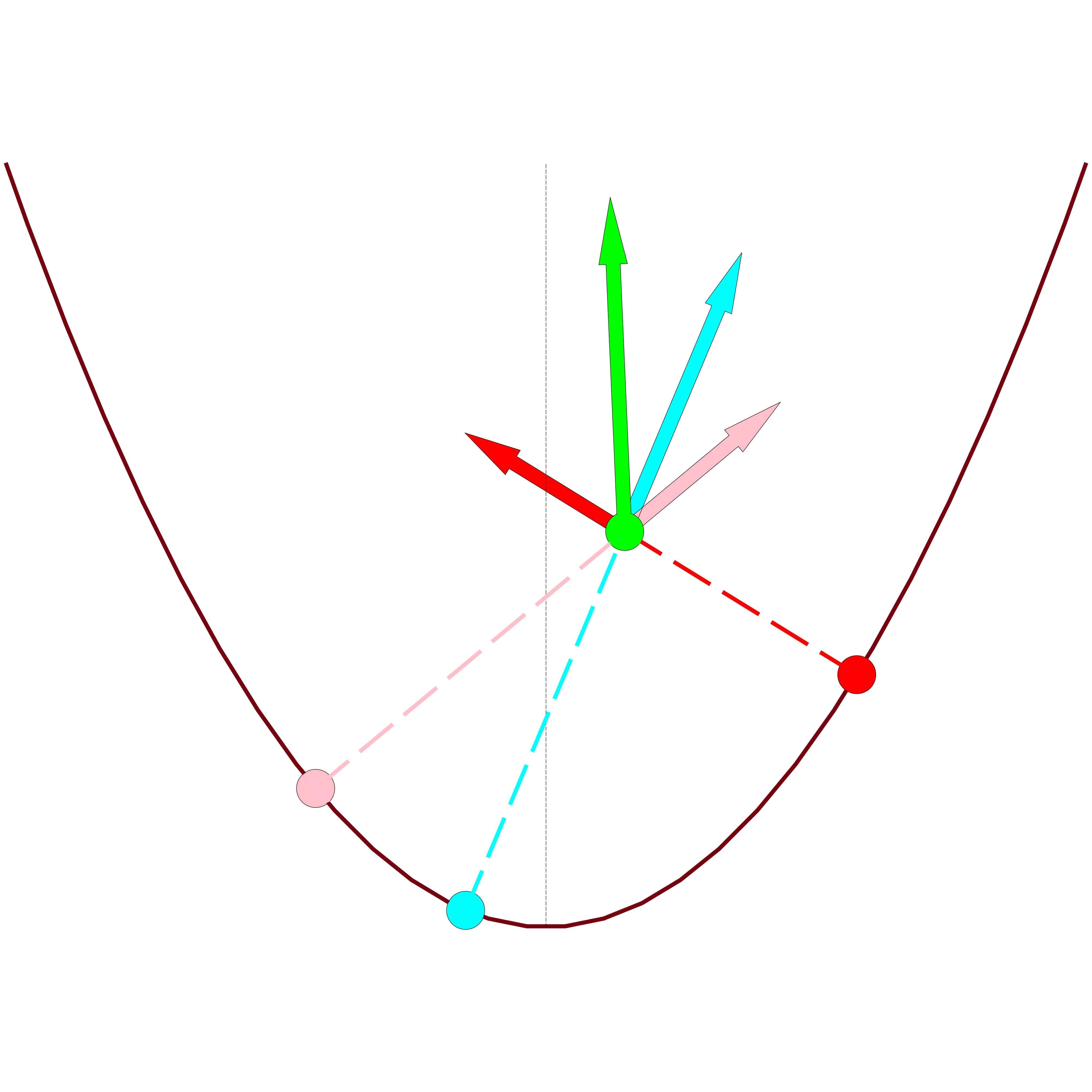}
   \begin{small}
   \put(55.1,71){\color{black}\line(-1,0){7}}
   \put(56,46){$p$}
   \put(80,31){$q_1$}
   \put(42,9){$q_3$}
   \put(17,26){$q_2$}
   \put(41,70){$\overrightarrow{up}$}
   \end{small}     
   \end{overpic}
   \hspace*{7mm}
   \includegraphics[trim=0mm 0mm 0mm 0mm,clip,height=60mm]{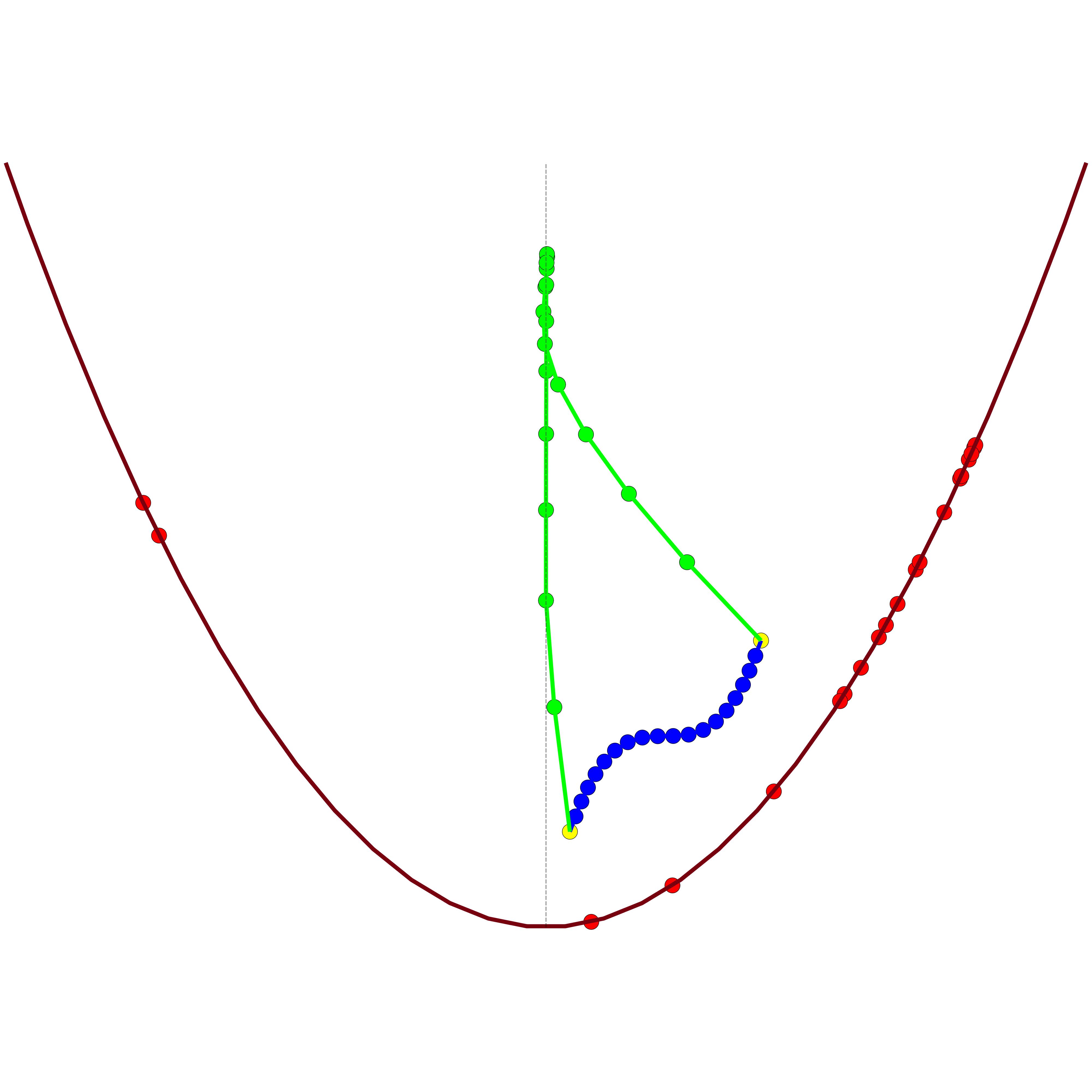}
   \caption{Left: The green vector shows the optimized direction for the vector $u-p$. The projection on the 1st, 2nd and 3rd gradient lines are indicated by red, pink and cyan respectively. Note that, one can check the correctness of the optimized move as the green direction refers to moving closer to the medial axis and increasing distance to the parabola eventually. 
   Right: Depending on the geometrical structure of the obstacle, geodesic and bending weight factors, the algorithm may not have a final result. The initial path (blue) variates (green) along the parabola's medial axis till infinity under zero geodesic and bending weight inputs.}
\label{fig:threepics}
\end{center}
\end{figure} 
\subsection{Variations of Energy}
\begin{definition}
Let $c:\left[ a, b\right]\longrightarrow N$ be piecewise smooth curve on a Riemannian manifold $N$. A ``variation" of $c$ is a continuous mapping $f:\left(-\epsilon ,\epsilon\right) \times \left[a,b\right] \longrightarrow N$ such that\\ 
\textit{a)} $f(0,t) = c(t)$, $t \in \left[ a, b\right]$,\\
\textit{b)} there exists a subdivision of $\left[ a,b\right]$ by points $a= t_1 < ... < t_{n-1} < t_n = b$ in a way that the restriction of $f$ to each $\left(-\epsilon ,\epsilon\right) \times \left[t_i,t_{i+1}\right]$ is differentiable \cite{carmo1992riemannian}. 
\label{def:variation}
\end{definition}
A variation is said to be \emph{proper} if $f\left(s,a\right) = c\left(a\right)$ and $f\left(s,b\right) = c\left(b\right)$ for all $s\in\left(-\epsilon, \epsilon\right)$ \cite{carmo1992riemannian}.
Considering a smooth variation (smooth $f$), the \emph{geodesic energy} $E$ and \emph{bending energy} $B$ of the variation $f$ for $s\in\left(-\epsilon,\epsilon\right)$ are defined as follows:
\begin{equation}
E \left( s\right) = \int_{a}^{b} \| \frac{\partial\,f}{\partial\,t} \left( s,t\right)\| ^ 2\ dt,\,\,\,\,\,\,\,\,\,\, B \left( s \right) = \int^{b}_{a} \parallel \frac{\partial^2\,f}{\partial\,t^2} \left( s,t\right) \parallel ^ 2 \ dt.
\label{eq:continuousenergies}
\end{equation} 
Later it will be shown that an \emph{update} of the \emph{initial curve} is discretely done and depends on a predefined \emph{step size} (also known as \emph{learning rate}). Hence, a modification of Eq.\,\ref{eq:continuousenergies} into our discrete setting is necessary.
\subsubsection{Geodesic Energy \& Bending Energy}
\label{sec:geodesicbending}
For the sake of simplicity of notations, we restrict ourselves to the case of the piecewise smooth curve in $\mathbb{R}^6$.
\begin{definition}  
Assume that $c :\left[ a, b \right] \longrightarrow \mathbb{R}^6$ is a ``piecewise smooth" curve with a finite partition, $a= t_1 < ... < t_{n-1} < t_n = b$, then the ``discrete geodesic energy" $E$ and ``discrete bending energy" $B$ of the curve $c$ are as follows (cf. \cite{pottmann2009computational}):
\begin{equation}
E \left( c \right) = \sum_{i = 2}^{n} \| c \left( t_i \right) - c \left( t_{i-1} \right)\| ^ 2,\,\,\,\,\,\,\,\,\,\, B \left( c \right) = \sum_{i = 2}^{n-1} \|  \left( c \left(t_{i+1}\right) - c\left(t_{i}\right) \right) - \left( c \left(t_i\right) - c\left(t_{i-1}\right) \right) \| ^ 2.
\label{energy:discrete}
\end{equation}
while ``discrete length" and ``discrete total curvature" are respectively given by (cf.\,\cite{carmo1992riemannian, milnor1950total}):
\begin{equation}
L \left( c \right) = \sum_{i = 2}^{n} \| c \left( t_i \right) - c \left( t_{i-1} \right)\| ,\,\,\,\,\,\,\,\,\,\, \tau \left( c \right) = \sum_{i = 2}^{n-1} \|  \left( c \left(t_{i+1}\right) - c\left(t_{i}\right) \right) - \left( c \left(t_i\right) - c\left(t_{i-1}\right) \right) \| .
\end{equation}
\label{def:geodesic:bending}
\end{definition}
Finally, it is noteworthy to emphasize that the variations under study in this paper are \emph{proper} as we require the motions to remain between a fixed start- and end-pose.
\subsection{Cost Function}
\label{sec:costfunction}
In order to find the \emph{nice motion} the idea is to use \emph{gradient descent} which is a first-order iterative optimization algorithm to find the local minimum of a \emph{cost function}. Though in many similar applications, especially in the context of machine learning, the step size (learning rate) is considered a constant small number, here it will not be accurate enough and hence a more intelligent step should be taken for that purpose. Additionally, the function whose descent per iterations is monitored is slightly different with the original cost function and will be introduced under the name of \emph{objective function}.\\
\begin{definition}
Assume that an initial singularity-free curve between start-pose $a^{\prime}$ and end-pose $b^{\prime}$ is given, while it is discretized into a ``piecewise smooth" curve with the set of $n$ breakpoints $\{ a^{\prime} = p^{1}, p^{2}, ..., p^{n} = b^{\prime}\}$ in $\mathbb{R}^6$. Assuming $u^j$ to be the update of $p^j$, the cost function, subject to optimization, is defined as follows:
\begin{equation}
\mathcal{C} \left( p, u \right) :=
\frac{\lambda\,(n-1)}{2\,L^{\prime}}\,E\left( u\right) + \frac{\eta\,(n-2)}{2\,\tau^{\prime}}\,B\left( u\right) - \frac{1}{(n-2)}\sum_{j=2}^{n-1} \mathcal{D}\left(p^j, u^j\right),
\label{eq:costfunction}
\end{equation}
where $\mathcal{D}$ is the function of Eq.\,\ref{cost:1}, $\lambda$ and $\eta$ are real numbers called ``geodesic weight" and ``bending weight", respectively and $L^{\prime}$ and $\tau^{\prime}$ are the discrete length and discrete total curvature obtained from previous iteration (at the first iteration they are substituted by the corresponding values of the initial curve). Finally, we have $u^T := \left( (u^2)^T,...,(u^{n-1})^T\right)$ and $p^T := \left( (p^2)^T,...,(p^{n-1})^T\right)$.
\label{def:costfunction}
\end{definition}
Geodesic and bending weights increase/decrease the geodesic energy and bending energy by blocking/unblocking their increase when substituted by high/low values.\\
\begin{remark}
In the absence of the coefficients $1/L^{\prime}$ and $1/\tau^{\prime}$ in Eq.\,\ref{eq:costfunction}, the geodesic energy and bending energy terms which are the sum of the \emph{squared values} would heavily outgrow the distance term containing the projection on the gradient lines which is single valued (cf. Fig.\,\ref{fig:areal}). This fact forces the cost function to be heavily dependant on the number of chosen breakpoints. Consequently, the existence of these coefficients are necessary.
\end{remark}
\begin{remark} 
It is noteworthy that the coefficients of the geodesic and bending energy terms in Eq.\,\ref{eq:costfunction} are multiplied by a factor involving the number of break points while the projection term's coefficient plays the role of a mean value for the number of breakpoints. These coefficients are designed in this way to reduce the effect of the number of breakpoints on the shape of the optimized curve. This is important as later it is shown that the number of break points may vary at different iterations of the optimization algorithm.
\end{remark}
The cost function, $\mathcal{C}$, is a \emph{quadratic} polynomial in $6\,(n-2)$ variables. Optimizing Eq.\,\ref{eq:costfunction} requires solving the following equation
\begin{equation}
\nabla\,\mathcal{C} = 0,
\label{gradienteq}
\end{equation}
for $u\in\mathbb{R}^{6\,(n-2)}$ which is equivalent to solving a linear system.  
\subsection{Step Size}
\label{sec:stepsize}
The idea is to calculate two step sizes, namely $s_1$ and $s_2$, in such a way that they yield $\pm\,\mathsf{growth}\,\%$ of variations of geodesic and bending energy respectively. If we define the geodesic energy of the updated breakpoints by:
\begin{equation}
P(s_1) := \sum_{i = 2}^{n} \| \left( p^i + s_1\,u^i \right) - \left( p^{i-1} + s_1\,u^{i-1} \right) \| ^ 2, 
\end{equation}
and its bending energy by
\begin{equation}
Q(s_2) := \sum_{j = 2}^{n-1} \| \left( p^{j-1} + s_2\,u^{j-1} \right) + \left( p^{j+1} + s_2\,u^{j+1} \right) - 2\,\left( p^{j} + s_2\,u^{j} \right) \| ^ 2,
\end{equation}
and having the previous geodesic energy $E^{\prime}$ and previous bending energy $B^{\prime}$, then the real roots of the following equations will guarantee $\pm \mathsf{growth}\%$ variation in energies:
\begin{equation}
P(s_1) - (1 \pm\,\frac{\mathsf{growth}}{100})\,E^{\prime} = 0,\ \ \ Q(s_2) - (1 \pm\,\frac{\mathsf{growth}}{100})\,B^{\prime} = 0.
\label{eq:variation}
\end{equation}
Finally, the step size is chosen as the minimum of the positive real roots of Eq.\,\ref{eq:variation} and $1$.
\begin{figure}[t!] 
\begin{center}   
   \begin{overpic}[height=50mm]{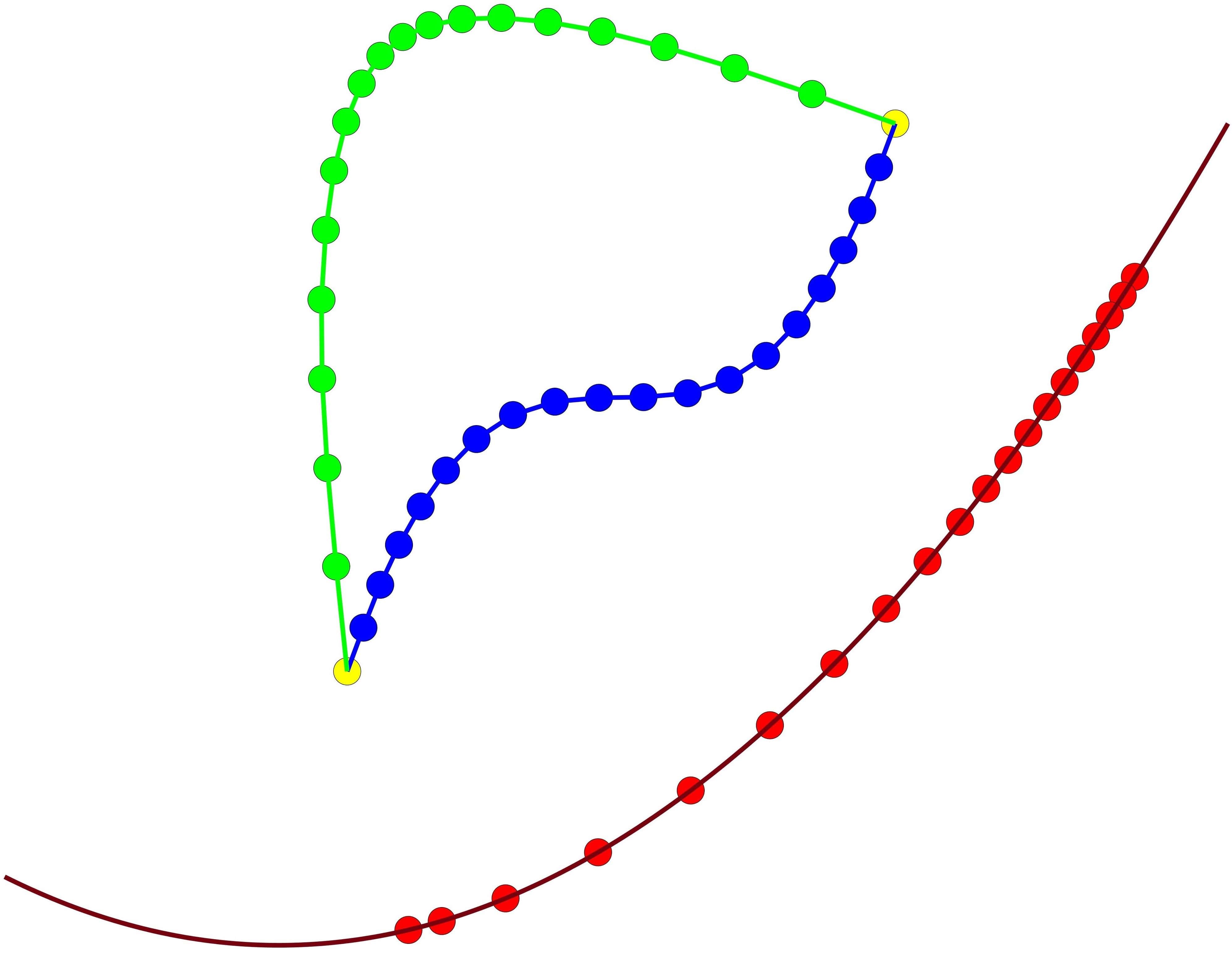}
	\begin{small}
   \put(102,67){$\Sigma$}
   \put(20,22){$a^{\prime}$}
   \put(75,70){$b^{\prime}$}
   \end{small}     
   \end{overpic}
   \hspace*{10mm}
   \begin{overpic}[height=50mm]{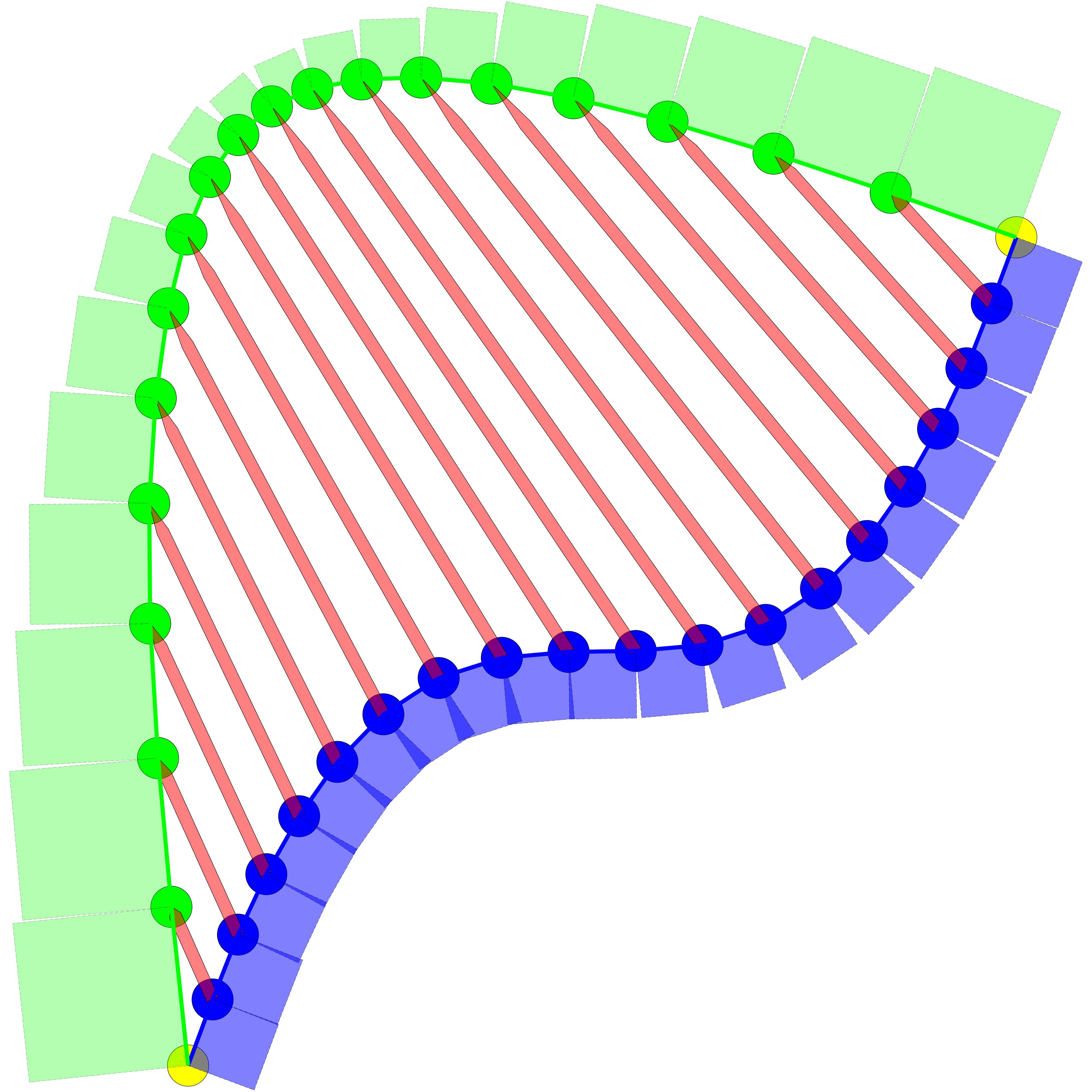}
	\begin{small}
	\end{small}     
   \end{overpic}
	\caption{\scriptsize Left: Illustration of the pull effect of geodesic energy on the breakpoints. The pattern of the breakpoints preserves a regular shape after variation. 
Right: A geometrical visualization of the errors. The blue boxes describe the calculated value for the geodesic energy before variation, while the green ones depict this energy after the variation. The red arrows connect a breakpoint to its corresponding update. One way to realize the error's occurrence would be through assigning physical units to the calculations, namely $cm$. In that way the the value computed for the geodesic energy will be of physical dimension $cm^2$ while the distance is $cm$.
Both figures are created for the case of the variation of an initial curve $t\mapsto \left(\,t + 1.25, t^3 + 2\right)$ where $t\in\left[\,-1,1\right]$ with respect to a parabola and with $\lambda =\eta = 10$ as weight factors.}
\label{fig:areal}
\end{center}
\end{figure} 
\subsection{Objective Function}
Now, in order to monitor the descent of the cost function with respect to iteration we propose a slightly different function called \emph{objective function}.
\begin{definition}
Assume $u$ to be the vector obtained by solving the linear system (cf. Section\,\ref{sec:costfunction}) and $\pi$ the orthogonal projection into the $\Sigma$-variety. Then the ``objective function" for monitoring the gradient descent is
\begin{equation} 
\mathcal{C}^{\prime}\left( u, p\right) := \frac{\lambda\,\left(n-1\right)}{2\,L}\,E\left({u}\right) + \frac{\eta\,\left(n-2\right)}{2\,\tau} \,B\left({u}\right) - \frac{1}{\left( n-2\right)}\,\sum_{j=2}^{n-1}\,\mathfrak{d}\left(\,u^j,\,\pi\left(\,p^{j}\,\right)\right).
\label{alt:1}
\end{equation}
\label{def:alt:1}
\end{definition}
Note that, as we have the updated curve breakpoint coordinates it is possible to compute length and total curvature now. Hence, the terms $1/L$ and $1/\tau$ replace the terms $1/L^{\prime}$ and $1/\tau^{\prime}$ from Eq.\,\ref{eq:costfunction} respectively. Additionally, since the goal is to increase the distance to the $\Sigma$-variety, the corresponding terms are replaced by the sum of distances to corresponding closest pedals.
\subsection{Remarks on the Curve's Energies}
\label{section:deviation}
Let $c: \left(a,b\right)\longrightarrow\mathbb{R}^n$ be a curve parametrized by \emph{arc length} $s$. Since the tangent vector $c^{\prime}\left( s\right)$ has unit length, the norm $\|c^{\prime\prime}\|$ of the second derivative measures the rate of change of the angle which neighboring tangents make with the tangent at $s$. $\|c^{\prime\prime}\|$ gives, therefore, a measure of how rapidly the curve pulls away from the tangent line at $s$, in the neighborhood of $s$ (namely, the curvature $\kappa\left( s\right)$)\cite{do2016differential}. Consequently, in the discrete case, if $\left\lbrace a^{\prime}=p_{1}, p_{2}, ..., p_{n}=b^{\prime}\right\rbrace$ is a sequence of breakpoints defining a piecewise smooth curve, it is natural to consider the angle $\theta_{i}$ between vectors $p_{i+1}-p_{i}$ and $p_{i}-p_{i-1}$ as the \emph{curvature} at the breakpoint $p_{i}$ and the sum $\sum^{n}_{i=1}\theta_{i}$ as the \emph{total curvature} \cite{milnor1950total}. In fact in this way one can define the bending energy by $\sum^{n}_{i=1}\theta^2_{i}$, but Definition\,\ref{def:geodesic:bending} is slightly different. Due to the fact that the measure of a central angle and the arc it intercepts are equal in value, in the case that the curve is parametrized by arc length the two definitions are equivalent. However, such values obtained may differ if the curve under consideration is not arc length parametrized (i.e. consider a line not possessing arc length parametrization, then based on Definition\,\ref{def:geodesic:bending} the bending energy is not zero). But such a deviation creates a computational advantage as otherwise a denominator containing the norms of every two adjacent edges would appear in the bending energy term $B$ of the cost function (Eq.\,\ref{def:costfunction}). This denominator then diminishes bending energy term being a quadratic polynomial and consequently denying the advantage of possessing a linear system after derivation. Finally, the geodesic energy term contributes significantly to resolving this issue as it creates a \emph{pull} effect which preserves a regular distribution of breakpoints across the curve after variation (see Fig.\,\ref{fig:areal}-right). 
\subsection{Orthogonal projection into the configuration space}
\label{sec:projection:cylinder}
In the process of updating breakpoints, by obtaining the preferred direction and step size (cf. Eq.\,\ref{eq:variation}), one may end up with a curve not necessarily belonging to the cylinder $\Gamma$. Having in mind that $\Gamma$ is the set of all performable motions for the pentapod, it is important to find an update restricted to it. In order to resolve it, one immediate idea is to keep the update of each breakpoint $p$ restricted to its corresponding tangent space $T_p\left(\Gamma\right)$ to the cylinder $\Gamma$. In the next step, the updated curve is orthogonally projected into the cylinder.
\subsubsection{Projection on Cylinder's Tangents}
For a breakpoint $p$ we are interested in its update $u$ belonging to $T_{p}(\Gamma)$. The following lemma creates the first step in building a systematic update of $\left\lbrace p^2, \ldots,p^{n-1}\right\rbrace$ on $T\,\Gamma$.   
\begin{lemma}
Let $f$ be a smooth function on a Riemannian manifold $N$. Let $M$ be a submanifold of $N$. Then the gradient of the map $f$ restricted to $M$ at a point $p\in M$, $\left(\nabla f|_M\right)_p$, is the orthogonal projection of $\left(\nabla f\right)_p$ onto $T_p\left( M\right)$.	
\label{lemma:oneil}
\end{lemma}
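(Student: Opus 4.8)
The plan is to work directly from the defining variational property of the gradient together with the orthogonal decomposition of the ambient tangent space. Recall that on a Riemannian manifold $(N,\langle\cdot,\cdot\rangle)$ the gradient is characterized by $\langle (\nabla f)_p, X\rangle = df_p(X)$ for every $X\in T_p(N)$ (in our application the inner product is the one induced by $g$, cf.\ Remark~\ref{remark:metric:metric-tensor}). Since $M\subset N$ is a submanifold, the ambient inner product restricts to $T_p(M)$ and equips $M$ with its induced Riemannian metric; with respect to it the restricted gradient is characterized by $\langle (\nabla f|_M)_p, X\rangle = d(f|_M)_p(X)$ for every $X\in T_p(M)$. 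The whole argument reduces to comparing these two characterizations on the subspace $T_p(M)$.

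First I would use the orthogonal decomposition $T_p(N)=T_p(M)\oplus T_p(M)^{\perp}$ to split the ambient gradient as $(\nabla f)_p=(\nabla f)_p^{\top}+(\nabla f)_p^{\perp}$, where $(\nabla f)_p^{\top}\in T_p(M)$ is precisely its orthogonal projection onto $T_p(M)$. The claim to be established is then the identity $(\nabla f|_M)_p=(\nabla f)_p^{\top}$.

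The key step is the identity $d(f|_M)_p(X)=df_p(X)$ for every $X\in T_p(M)$. I would verify it by representing $X$ through a curve $\gamma$ lying in $M$ with $\gamma(0)=p$ and $\gamma^{\prime}(0)=X$; because $\gamma$ never leaves $M$ one has $(f|_M)\circ\gamma=f\circ\gamma$, and differentiating at $t=0$ yields $X(f|_M)=X(f)$. This is the only place where the submanifold structure is genuinely used, and it constitutes the main conceptual point; everything else is linear algebra inside the inner-product space $T_p(N)$.

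Combining the pieces, for every $X\in T_p(M)$ I would chain the defining equalities,
\begin{equation}
\langle (\nabla f|_M)_p,\,X\rangle=d(f|_M)_p(X)=df_p(X)=\langle (\nabla f)_p,\,X\rangle=\langle (\nabla f)_p^{\top},\,X\rangle,
\end{equation}
where the final equality uses $\langle (\nabla f)_p^{\perp},X\rangle=0$ for $X\in T_p(M)$. Since both $(\nabla f|_M)_p$ and $(\nabla f)_p^{\top}$ lie in $T_p(M)$ and the induced inner product is non-degenerate there, agreement of the inner products against all $X\in T_p(M)$ forces $(\nabla f|_M)_p=(\nabla f)_p^{\top}$, i.e.\ the restricted gradient equals the orthogonal projection of $(\nabla f)_p$ onto $T_p(M)$, as claimed. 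I do not anticipate a serious obstacle: the argument is a short chain of defining identities, and the only thing requiring care is to ensure the metric on $M$ is taken to be the restriction of the ambient one, so that both gradients are measured with the same inner product.
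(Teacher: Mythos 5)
Your proposal is correct and follows essentially the same route as the paper's proof: both rest on the defining property of the gradient via the metric, the orthogonal splitting of $\left(\nabla f\right)_p$ into tangential and normal parts, the identity $X\left(f|_M\right) = X\left(f\right)$ for $X$ tangent to $M$, and the same chain of equalities followed by non-degeneracy of the induced metric on $T_p\left(M\right)$. If anything, you are slightly more careful than the paper, which asserts $X\left(f|_M\right)=X\left(f\right)$ without the curve argument and leaves the final passage from equality of inner products to equality of vectors implicit.
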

\begin{proof}
If $g$ is the \emph{Riemannian metric} on $N$ then $g|_{M}\left(\nabla f|_M, X\right) = X\left(f|_M\right)$ for all $X \in \mathfrak{X}\left( M\right)$ denoting the set of all smooth vector fields on $M$ (cf. \cite{o1983semi}). Now, considering the decomposition at the point $p\in M$, $\left(\nabla\,f\right)_p = v_{o} + v_{t}$ where $v_t \in T_{p}\left(M\right)$ and $v_o \in T_{p}\left(M\right)^{\perp}$ we have \footnote{In the following equation the subscript $p$ is dropped intentionally.}
\begin{eqnarray*}
g|_{M}\left(\nabla f|_M, X\right) = X\left(f|_M\right) = X\left(f\right) = g\left(\nabla\,f, X\right) = g\left( v_o + v_t, X \right) =\\
g\left( v_o, X\right) + g\left( v_t, X\right) = g\left( v_t, X\right).
\end{eqnarray*} 
which gives
\begin{equation}
g\left( \nabla\,f|_M, -\right) = g\left(v_t, -\right).
\end{equation}
\end{proof}
Each breakpoint of the piecewise smooth curve is located in the ambient space $\mathbb{R}^6$. Define 
\begin{equation}
{\hat{p}} = \left(  \begin{array}{cccc}\,
\left({p}^{\ 2}_{\ 6\times1}\,\right)^{T},\, 
\left({p}^{\ 3}_{\ 6\times1}\,\right)^{T},\,
\cdots ,\,
\left({p}^{\ n-2}_{\ 6\times1}\,\right)^{T}\, 
\end{array}\right)^{T}_{6 (n-2) \times 1},
\label{p:hat}
\end{equation}
where ${p}^{\ i}_{\ 6\times1}$ denotes the 6-dimensional coordinate vector of the $i$-th breakpoint of the piecewise smooth curve. In this way the point $\hat{p}\in\mathbb{R}^{\,6(n-2)}$ stands for the piecewise smooth curve. Defining the updated breakpoint $u$ in exactly the same fashion we have
\begin{equation}
{\hat{u}} = \left(  \begin{array}{cccc}\,
\left({u}^{\ 2}_{\ 6\times1}\,\right)^{T},\, 
\left({u}^{\ 3}_{\ 6\times1}\,\right)^{T},\,
\cdots ,\,
\left({u}^{\ n-1}_{\ 6\times1}\,\right)^{T}\, 
\end{array}\right)^{T}_{6 (n-2) \times 1}.
\label{u:hat}
\end{equation}
Now, using the above terminology one trivially finds that the vector $\hat{u} - \hat{p}$ is in fact the $\left(\nabla\mathcal{C}\right)_{\hat{p}}$ on the level sets of the cost function in $\mathbb{R}^{\,6(n-2)}$. 
\begin{figure}[t!] 
\begin{center}   
  \begin{overpic}[height=51 mm]{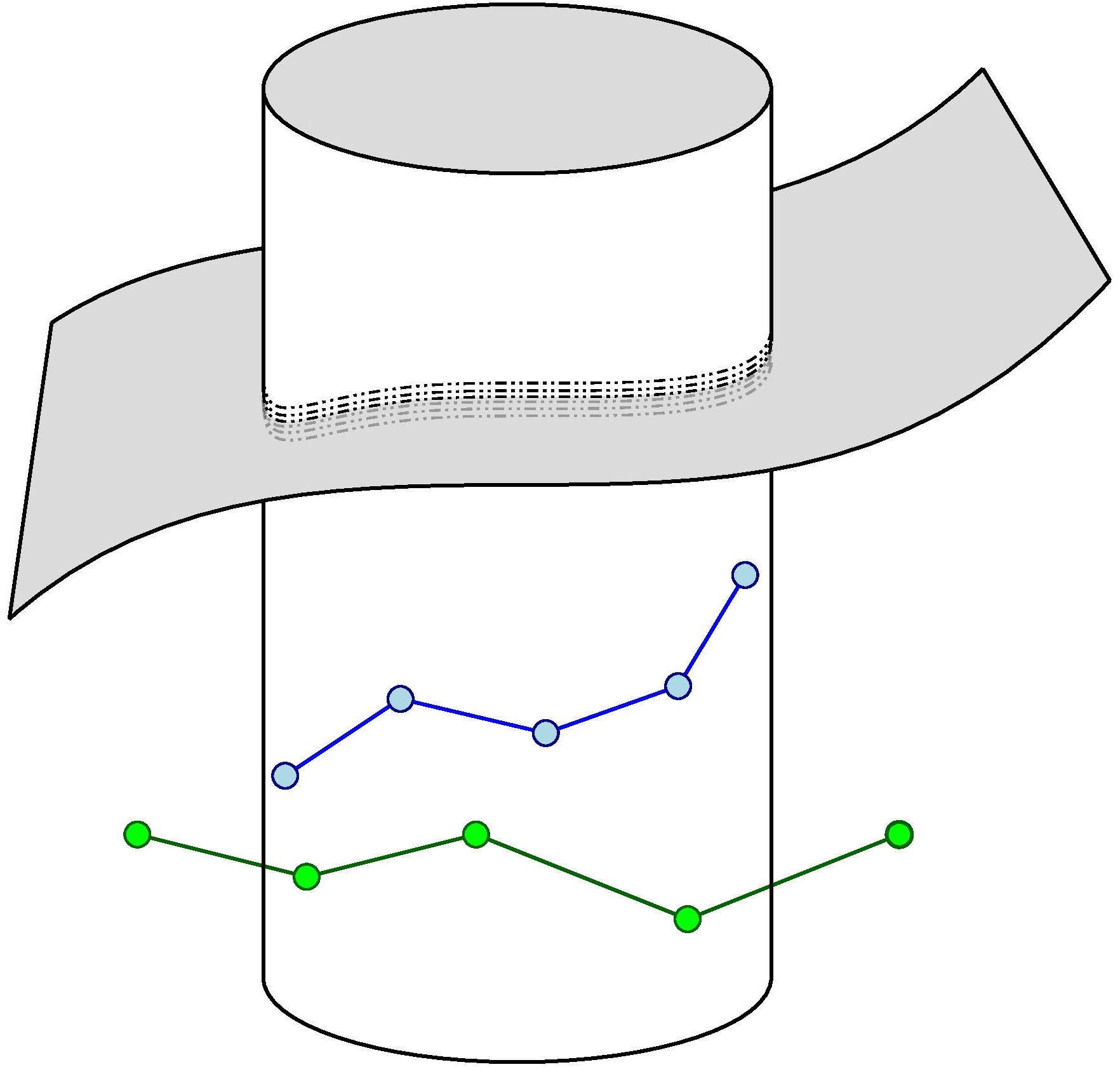}
	\begin{scriptsize}
	\put(11.73,13){\color{black}\line(0,1){5}}
	\put(19,25.5){\color{black}\line(-1,0){3}}
	\put(16,25.5){\color{black}\line(0,1){5}}
	\put(15,34){$p^j$}
	\put(10.5,8.5){$u^j$}
	\put(3,93){$\mathbb{R}^6$}
	\put(26,77){$\Gamma$}
	\put(84,80){$\Sigma$}
	\end{scriptsize}     
  \end{overpic}
   \hspace*{0mm}
  \begin{overpic}[height=51 mm]{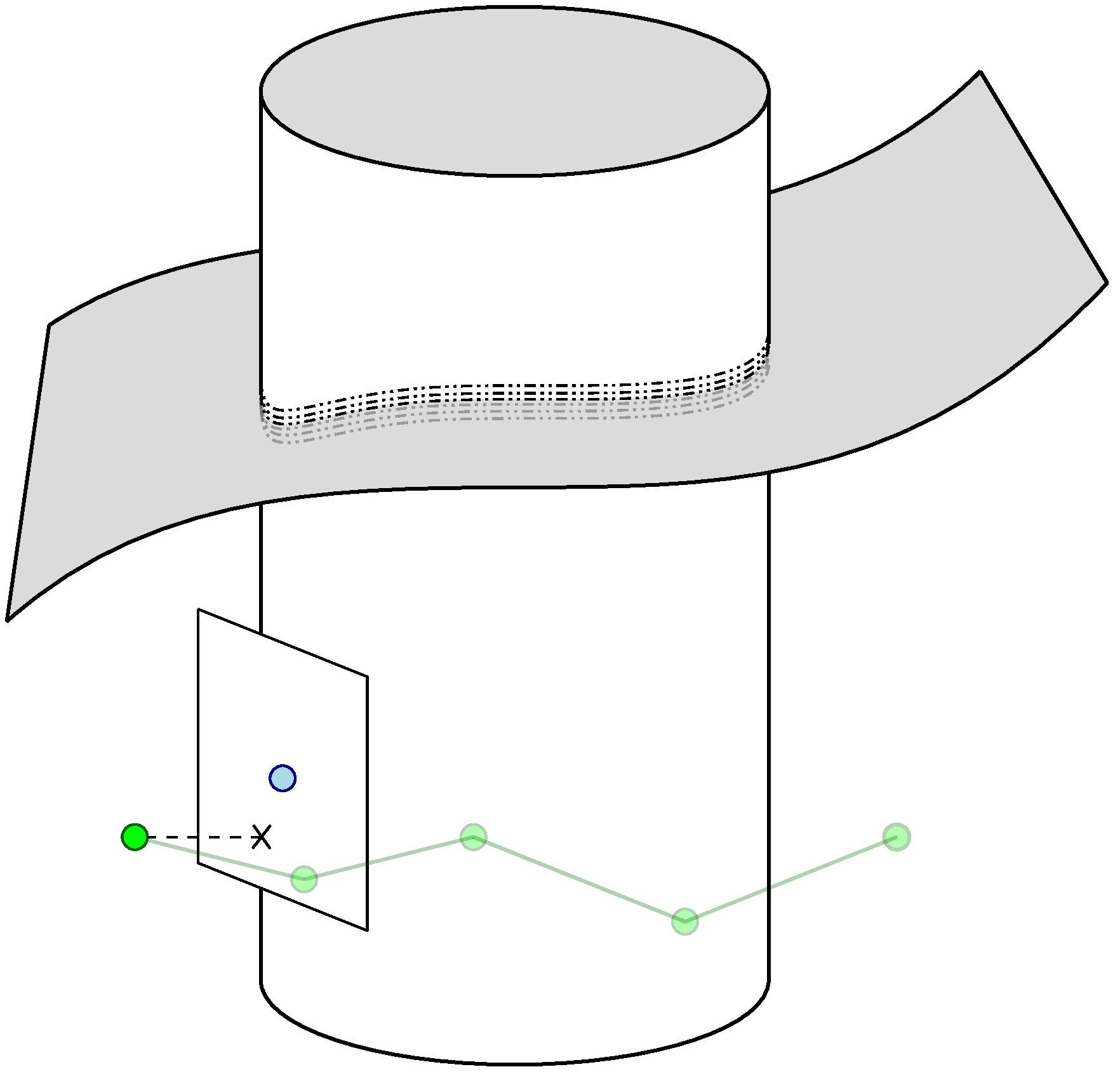}
   \begin{scriptsize}
    \put(23,37){\color{black}\line(1,0){15}}
    \put(3,93){$\mathbb{R}^6$}
	\put(26,77){$\Gamma$}
	\put(84,80){$\Sigma$}
	\put(40,37){$T_{p^j}\left(\Gamma\right)$}
	\end{scriptsize}   
  \end{overpic}
   \hspace*{0mm}
  \begin{overpic}[height=51 mm]{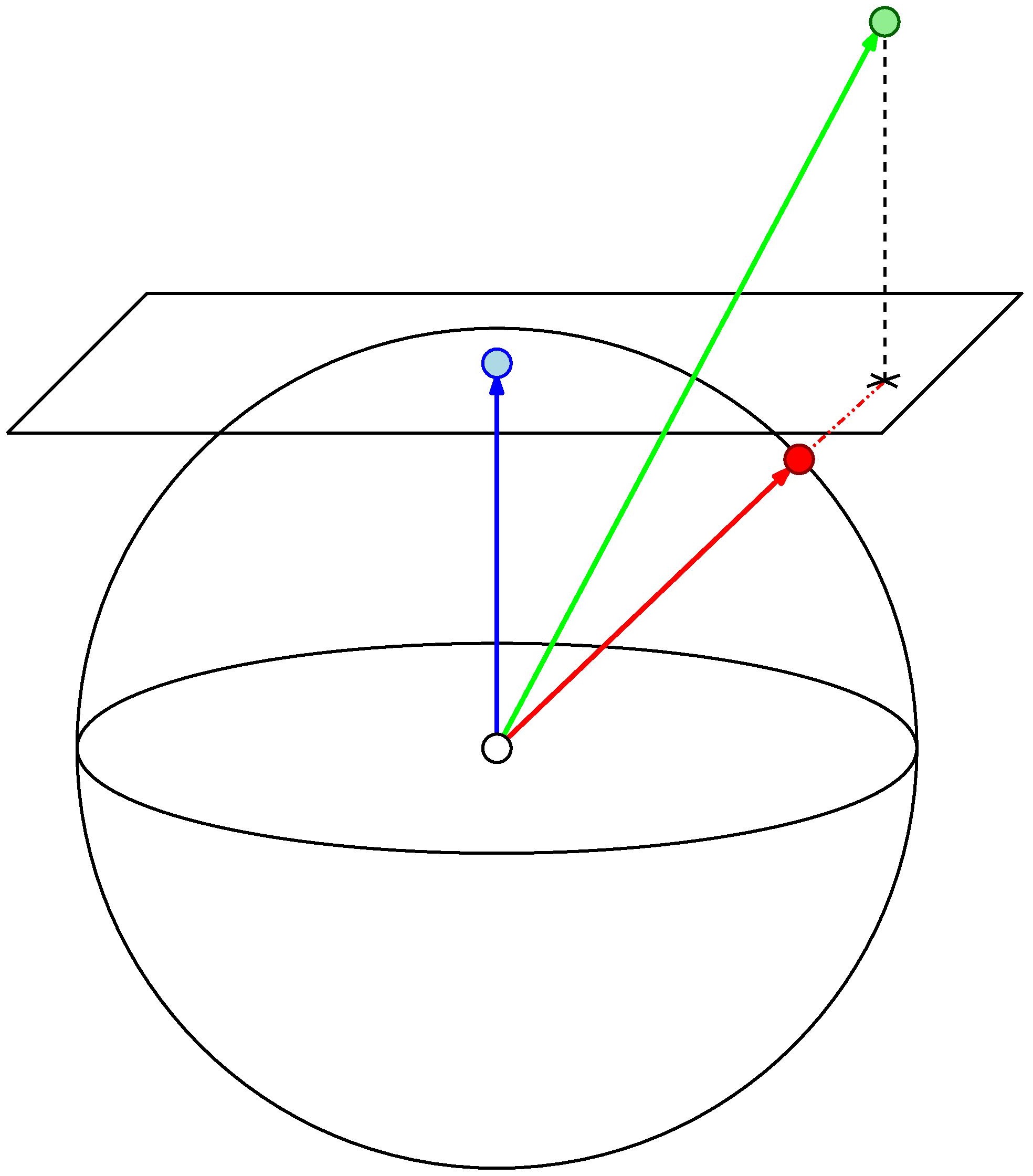}
	\begin{scriptsize}
    \put(34,84){\color{black}\line(0,-1){11}}
    \put(3,95){$\mathbb{R}^3$}
    \put(18,91){$T_{\left(p_1^j,p_2^j,p_3^j\right)}\left(S^2\right)$}
	\end{scriptsize}     
  \end{overpic}
	\caption{Imaginative illustration of the ``generalized cylinder" $\Gamma\subset\mathbb{R}^6$. Left: The blue curve indicates the initial path while the green one stands as the updated curve (not necessarily belonging to $\Gamma$). The red dots are in fact the possible imaginative pedal points on the singularity surface $\Sigma$. Middle: The projection of the updated curve into the the tangent space. Right: A ``realistic" interpretation of such projections in the orientation space. The blue vector shows the original orientation of the pose $p^j$ and the green vector depicts the orientation after update. The cross, $\mathsf{x}$, stands as the back-projection into the ``tangent to sphere at $p^j$", while the red vector describes the final accepted motion after back-projection from the tangent to sphere. }
	\label{fig:tangent:project}
\end{center}
\end{figure}   
\begin{lemma}  
Assume $\hat{p} \in \Gamma^{n-2} \subset \mathbb{R}^{6(n-2)}$. 
Then
\begin{equation}
\left(\nabla\mathcal{C}|_{\Gamma^{\,(n-2)}}\right)_{\hat{p}} =
\left(  \begin{array}{cccc}\,
\left({\mathcal{U}}^{\ 2}_{\ 6\times1}\,\right)^{T},\, 
\left({\mathcal{U}}^{\ 3}_{\ 6\times1}\,\right)^{T},\,
\cdots ,\,
\left({\mathcal{U}}^{\ n-1}_{\ 6\times1}\,\right)^{T}\, 
\end{array}\right)^{T}_{6 (n-2) \times 1} - {\hat{p}},
\label{eq:proj:nabla}
\end{equation}  
where $\forall i,\,\,2\leq i\leq n-1$ we have
\begin{equation}
\left({\mathcal{U}}^{\ i}_{\ 6\times1}\right)^{T} = \left( \mathrm{Pr}_{(p_1^i,p_2^i,p_3^i)}\left( u_1^i,u_2^i,u_3^i\right),u_4^i,u_5^i,u_6^i\right)^T,
\end{equation}
where $\mathrm{Pr}_{\left( p_1^i,p_2^i,p_3^i\right)}:\mathbb{R}^3\longrightarrow T_{\left( p_1^i,p_2^i,p_3^i\right)}\left( S^2\right)$ is the orthogonal projection onto the sphere's tangent plane at $\left( p_1^i,p_2^i,p_3^i\right)$.
\label{lemma:projection}
\end{lemma}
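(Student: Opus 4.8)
The plan is to obtain the formula directly from Lemma~\ref{lemma:oneil}, which identifies the gradient of $\mathcal{C}$ restricted to the submanifold $\Gamma^{\,(n-2)}\subset\mathbb{R}^{6(n-2)}$ with the orthogonal projection of the ambient gradient onto $T_{\hat p}\!\left(\Gamma^{\,(n-2)}\right)$. Since the paragraph preceding the lemma already identifies the ambient gradient $\left(\nabla\mathcal{C}\right)_{\hat p}$ with the vector $\hat u-\hat p$, the entire statement collapses to computing a single orthogonal projection and checking that, block by block, it fixes the position coordinates and sends the orientation coordinates to their $\mathrm{Pr}$-image.

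First I would make the product structure of the constraint manifold explicit. The generalized cylinder $\Gamma$ is the zero set of $u_1^2+u_2^2+u_3^2-1$, a single equation involving only the orientation coordinates, so $\Gamma\cong S^2\times\mathbb{R}^3$ and $\Gamma^{\,(n-2)}$ is a product of $n-2$ copies of it. Hence the defining equations $\phi_i:=(u_1^i)^2+(u_2^i)^2+(u_3^i)^2-1$, one per interior breakpoint, are independent and each involves only the orientation slots $(u_1^i,u_2^i,u_3^i)$ of its own block. I would then record that the normal space of $\Gamma^{\,(n-2)}$ at $\hat p$ is spanned by the mutually orthogonal gradients $\nabla\phi_i$, each supported on a single block and proportional there to $(p_1^i,p_2^i,p_3^i)$; correspondingly $T_{\hat p}\!\left(\Gamma^{\,(n-2)}\right)$ splits as a direct sum over blocks, and inside each block as $T_{(p_1^i,p_2^i,p_3^i)}(S^2)\oplus\mathbb{R}^3$.

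Next I would perform the projection blockwise. Because the normals live only in the orientation slots and are pairwise orthogonal, the projection of $\hat u-\hat p$ onto $T_{\hat p}\!\left(\Gamma^{\,(n-2)}\right)$ is block-diagonal: on the $i$-th block it leaves the position differences $u_4^i-p_4^i,\,u_5^i-p_5^i,\,u_6^i-p_6^i$ untouched and removes from the orientation difference $(u_1^i,u_2^i,u_3^i)-(p_1^i,p_2^i,p_3^i)$ its component along $(p_1^i,p_2^i,p_3^i)$. Using the unit-norm condition $\|(p_1^i,p_2^i,p_3^i)\|=1$, valid since $p^i\in\Gamma$, this tangential orientation part is exactly $\mathrm{Pr}_{(p_1^i,p_2^i,p_3^i)}(u_1^i,u_2^i,u_3^i)-(p_1^i,p_2^i,p_3^i)$, so the $i$-th block of the projection is $\mathcal{U}^i-p^i$. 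Stacking the blocks reproduces the right-hand side $\mathcal{U}-\hat p$ of the asserted identity.

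The step I expect to need the most care is the choice of ambient metric. The formula keeps the position coordinates fixed, and this is precisely the effect of the \emph{Euclidean} normal $(p_1^i,p_2^i,p_3^i,0,0,0)$; I would therefore invoke Lemma~\ref{lemma:oneil} with the Euclidean structure on $\mathbb{R}^{6(n-2)}$, consistent with $\mathrm{Pr}$ being the Euclidean projection. With the object-oriented tensor $g$ instead, the normal acquires nonzero position entries and the position coordinates would no longer be preserved, so this choice is essential. The only remaining bookkeeping is the rewriting of the linear tangential projection of $u^i-p^i$ as the $\mathrm{Pr}$-form in the statement, which again is a direct consequence of $\|(p_1^i,p_2^i,p_3^i)\|=1$.
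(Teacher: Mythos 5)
Your proposal is correct and takes essentially the same route as the paper's proof: invoke Lemma~\ref{lemma:oneil} to identify $\left(\nabla\mathcal{C}|_{\Gamma^{\,(n-2)}}\right)_{\hat{p}}$ with the orthogonal projection of $\hat{u}-\hat{p}$ onto $T_{\hat{p}}\left(\Gamma^{n-2}\right)$, and then exploit the product splitting $T_{\hat{p}}\left(\Gamma^{n-2}\right)\cong T_{(p_1^2,p_2^2,p_3^2)}\left(S^2\right)\times\cdots\times T_{(p_1^{n-1},p_2^{n-1},p_3^{n-1})}\left(S^2\right)\times\mathbb{R}^{3(n-2)}$ to reduce the projection, block by block, to the sphere-tangent projection in the orientation slots while the position slots are fixed. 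Your final observation---that the projection must be the \emph{Euclidean} one, since the $g$-orthogonal normal to $\Gamma$ acquires nonzero position entries whenever $J\neq 0$---is a point the paper leaves implicit (it applies Lemma~\ref{lemma:oneil}, stated for the Riemannian metric, but then computes with the Euclidean projection $\mathrm{Pr}$), so your write-up is in fact slightly more careful than the original on this step.
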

\begin{proof}
$\Gamma$ is a regular submanifold of $\mathbb{R}^6$. This implies $\Gamma^{n-2}$ being a regular submanifold of $\mathbb{R}^{\,6(n-2)}$. Now, using Lemma\,\ref{lemma:oneil} we have $\left(\nabla\mathcal{C}|_{\Gamma^{\,(n-2)}}\right)_{\hat{p}} = \mathrm{Pr}_{\hat{p}}\left(\nabla\mathcal{C}\right)_{\hat{p}} = \mathrm{Pr}_{\hat{p}}\left(\hat{u}-\hat{p}\right)$ where $\mathrm{Pr}_{\hat{p}}:\mathbb{R}^{\,6(n-2)}\longrightarrow T_{\hat{p}}\left(\Gamma^{n-2}\right)$ is the orthogonal projection into the tangent of $\Gamma^{n-2}$ at $\hat{p}$. This results in
\begin{equation}
\left(  \begin{array}{cccc}\,
\left({\mathrm{Pr}_{p^2}\left( u^2 \right)}\,\right)^{T},\, 
\left({\mathrm{Pr}_{p^3}\left( u^3 \right)}\,\right)^{T},\,
\cdots ,\,
\left({\mathrm{Pr}_{p^{n-1}}\left( u^{n-1} \right)}\,\right)^{T}\, 
\end{array}\right)^{T}_{6 (n-2) \times 1} - {\hat{p}},
\label{eq:proj:nabla:2}
\end{equation}
where $\mathrm{Pr}_{p^i}:\mathbb{R}^6\longrightarrow T_{p^i}\left(\Gamma\right)$ is the orthogonal projection onto the tangent space at $p^i\in\Gamma$. Now having in mind that:
\begin{eqnarray}
T_{\hat{p}}\left(\Gamma^{n-2}\right)\cong
T_{p^2}\left(\Gamma\right)\times T_{p^3}\left(\Gamma\right)\times \ldots T_{p^{n-1}}\left(\Gamma\right)\cong\\
T_{(p_1^2,\,p_2^2,\,p_3^2)}\left(S^2\right)\times T_{(p_1^3,\,p_2^3,\,p_3^3)}\left(S^2\right)\times \ldots T_{(p_1^{n-1},\,p_2^{n-1},\,p_3^{n-1})}\left(S^2\right)\times\mathbb{R}^{3(n-2)},
\label{tangent:cylinder} 
\end{eqnarray}
one deduces that orthogonal projections of Eq.\,\ref{eq:proj:nabla:2} are merely the orthogonal projection on the sphere and hence Eq.\,\ref{eq:proj:nabla} is fulfilled (Fig.\,\ref{fig:tangent:project} illustrates the steps of the proof). 
\end{proof}
\subsubsection{Projection from Cylinder's Tangents to the Cylinder}
In the final step we desire the projection of the updated piecewise smooth curve from the cylinder's tangents to the cylinder. Such projection is easily done through normalization of the orientation variables, namely by the following map (cf Fig.\,\ref{fig:tangent:project}-right):
\begin{equation*}
\left( u_1, u_2, u_3, u_4, u_5, u_6 \right) \longmapsto \left( \frac{u_1}{\| u_1^2+u_2^2+u_3^2\|}, \frac{u_2}{\| u_1^2+u_2^2+u_3^2\|}, \frac{u_3}{\| u_1^2+u_2^2+u_3^2\|}, u_4, u_5, u_6 \right).
\end{equation*}
\begin{figure}[t!] 
\begin{center}   
  \begin{overpic}[height=30mm]{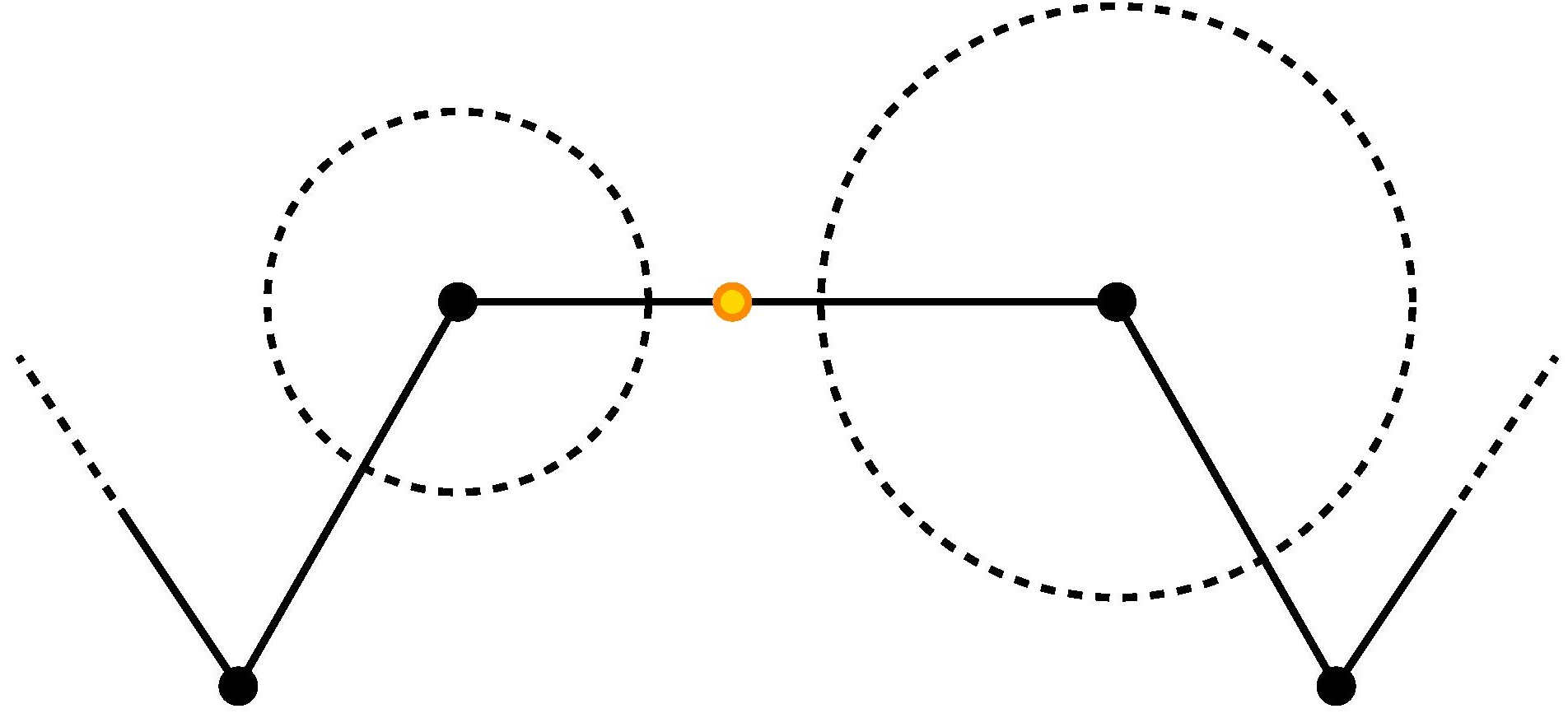}
  \begin{small}
  \put(28,31){$p^i$}
  \put(70,31){$p^{i+1}$}
  \put(1,38){$N_{r^i}\left(p^i\right)$}
  \put(40,0){$N_{r^{i+1}}\left(p^{i+1}\right)$}
  \end{small}     
  \end{overpic}
  \hspace*{5mm}
  \begin{overpic}[height=30mm]{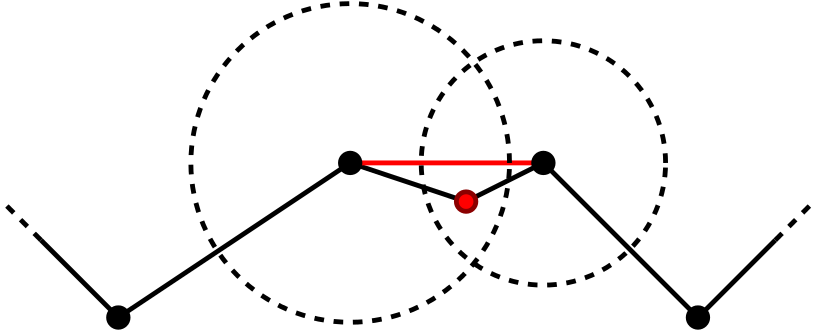}
  \begin{small}
  \put(40,25){$p^{i-1}$}
  \put(65,25){$p^{i+1}$}
  \put(55.5,-2){$p^i$}
  \put(0,35){$N_{r^{i-1}}\left(p^{i-1}\right)$}
  \put(77,35){$N_{r^{i+1}}\left(p^{i+1}\right)$}
  \put(57,13){\color{black}\line(0,-1){11}}
  \end{small}     
  \end{overpic}
	\caption{Imaginative illustration of minimal singularity-free cover of the curve. Left: If a segment is not fully covered by singularity-free balls then a breakpoint will be added in the mid of the uncovered part. Right: If two adjacent singularity-free balls are covering a breakpoint that breakpoint is removable. Note that such a removal should be in such a way that it does not alter the cover for adjacent to the neighboring breakpoints.}
	\label{fig:exclusion:inclusion}
\end{center}
\end{figure}   
\subsection{Finite Singularity-free Cover}
\label{sec:finite:cover}
The obtained variation of the initial path is compact in $\mathbb{R}^6$ and hence has a finite open cover. This leads one to the idea of a finite singularity-free cover. Such a cover is consisted of \emph{singularity-free balls} which are extensively discussed in \cite{rasoulzadeh2019linear2}. In this context we approach the concepts of \emph{inclusion} and \emph{exclusion} of breakpoints. In the following discussions, $r^i$ stands for the distance to the closest pedal point with respect to breakpoint $p^i$, guaranteeing the existence of the singularity-free ball $N_{r^i}\left(p^i\right)$.
\subsubsection{Inclusion of Breakpoints}  
For each two consecutive breakpoints, namely $p^i$ and $p^{i-1}$, the algorithm checks whether singularity-free balls $N_{r^i}\left(p^i\right)$ and $N_{r^{i-1}}\left(p^{i-1}\right)$ cover the segment in between. If the segment is not fully covered the algorithm implements a new breakpoint in exactly the midway of uncovered part of the segment as depicted in Fig.\,\ref{fig:exclusion:inclusion}-left.
\subsubsection{Exclusion of Breakpoints}
While the previous step assures one of singularity freeness of the curve, it is adequate not to have excessive included breakpoints. The initial idea is to exclude a breakpoint $p^i$ if it is covered by the adjacent balls $N_{r^{i-1}}\left(p^{i-1}\right)$ and $N_{r^{i+1}}\left(p^{i+1}\right)$ (cf.\ Fig.\ref{fig:exclusion:inclusion}-right). However, one should be mindful of the fact that the breakpoint $p^i$ itself creates the ball $N_{r^i}\left(p^i\right)$ which along with $N_{r^{i+2}}\left(p^{i+2}\right)$ play the role of the cover for $p^{i+1}$. Deleting $p^{i}$ then may cause the removal of its corresponding ball and hence a blown possible cover for $p^{i+1}$. In order to avoid such an undesirable situation the algorithm first labels the breakpoints which are doubly covered by adjacent balls (a point $p^i$ is doubly covered if it belongs to  $N_{r^{i-1}}\left(p^{i-1} \right) \cap N_{r^{i+1}}\left(p^{i+1}\right)$, see Fig.\,\ref{fig:exclusion:inclusion}-right). Then the algorithm groups such breakpoints into \emph{packs} based on the number of consequent occurrence of double covers. Calling the number of breakpoints in each pack the \emph{size} of the pack, the algorithm takes the following strategy:
\vspace*{1mm}
\begin{itemize}
\item[$\bullet$] Packs of size 1 are deleted,
\item[$\bullet$] In the packs of size bigger than 1 the breakpoints with odd numbers are removed. 
\end{itemize}
\vspace*{1mm}
Finally, the algorithm repeats these steps till no pack can be found \footnote{In practice, due to computational reasons the algorithm is set to keep 6 number of breakpoints at least.}. Naturally, in the above process the start- and end-pose are entirely neglected. 
%
%
%
\begin{figure}[t!]
\includegraphics[height = 55 mm, width = 0.622\columnwidth]{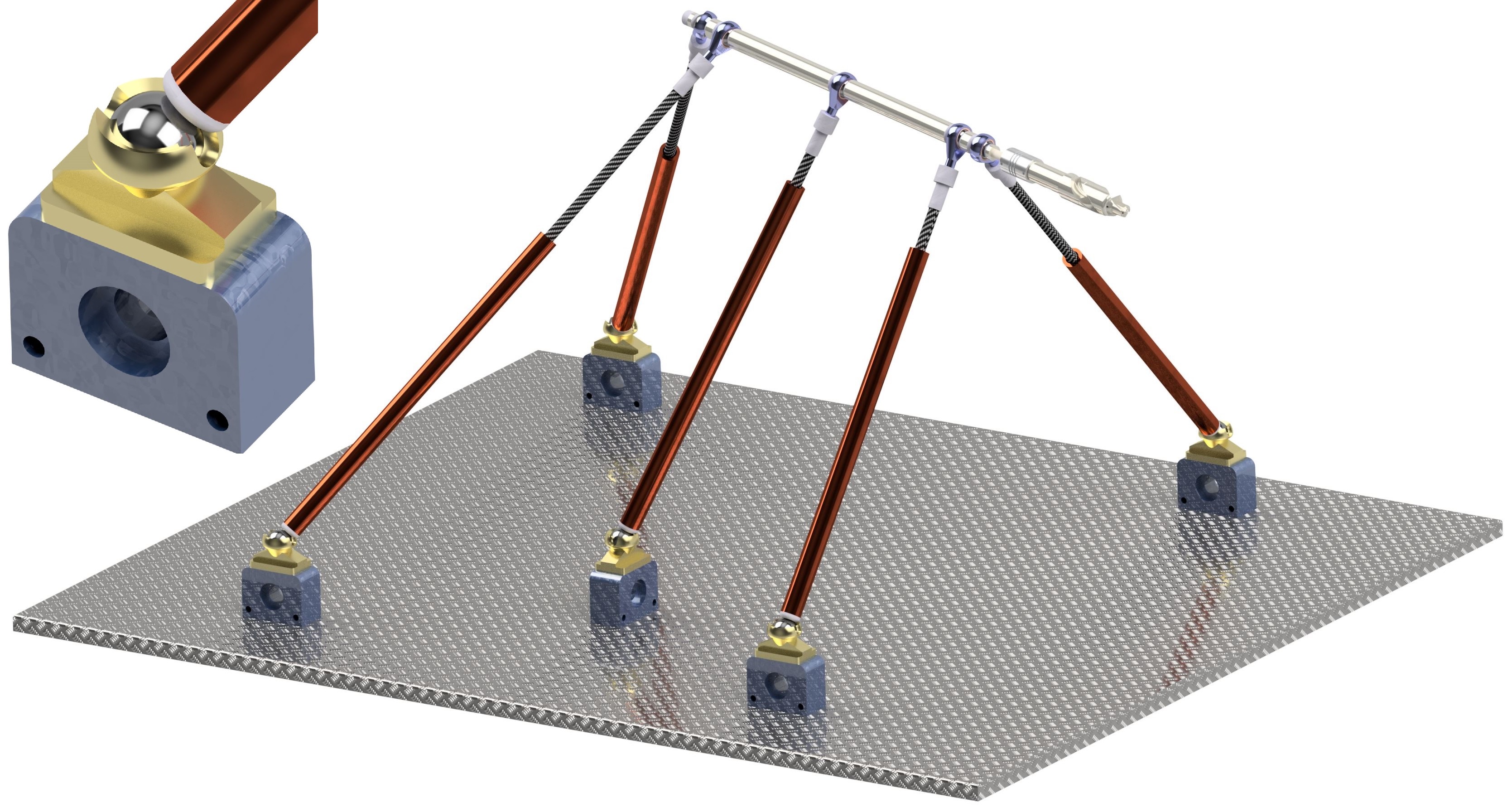}
\hspace*{5mm}
\begin{overpic}[height = 55 mm]{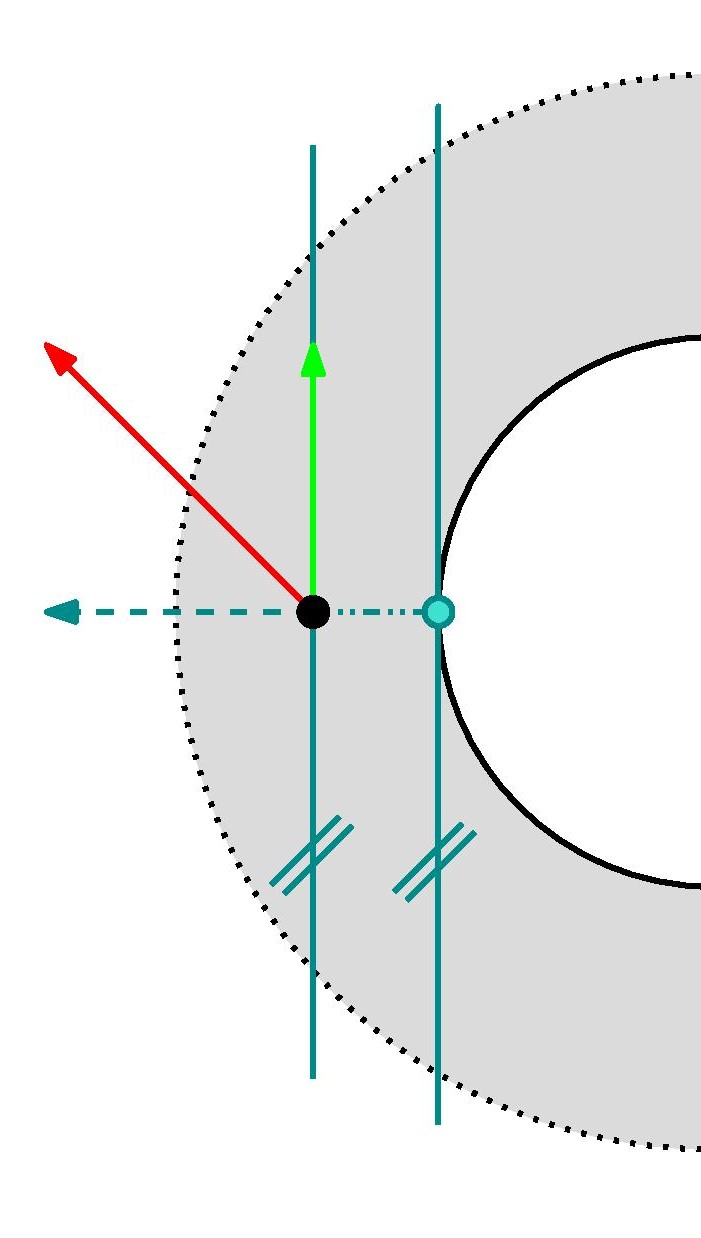}
  \begin{small}
  \put(17,41.5){$p^k$}
  \put(40,48.5){$\mathfrak{q}^k$}
  \put(-10,48){$N_{p^k}$}
  \put(-32,11){\footnotesize \it parallel tangent space}
  \put(-18,2){\footnotesize \it tangent to $Q$-variety}
  \put(-15,78){$(u^k - p^k)$}
  \put(70,67.5){$\hat{T}_{p^k}$}
  \put(70,58){\footnotesize \it Q-variety}
  \put(70,80){\footnotesize \it safe zone}
  \put(50,81.5){\color{black}\line(1,0){17}}
  \put(27,69){\color{black}\line(1,0){40}}
  \put(40,60){\color{black}\line(1,0){27}}
  \end{small}     
\end{overpic}     
\label{fig:design}
\caption{Left: Illustration of a simple pentapod with the LP-property in addition to a magnified view of the base spherical joints. Base joint limit is modelled by a cone with an axis orthogonal to the base plane. Right: Imaginative illustration of the projection on a tangent of a $Q$-variety.}
\end{figure}
\section{Joint Analysis} 
\label{sec:joint}
From the computational kinematics point of view, feasibility of optimized motion by the joint and linkages is of utmost importance. These physical limits restrict the end-effector movements and are of two types, namely, \emph{spherical joint limits} and \emph{prismatic joint limits}. It turns out that dealing with these restrictions can be translated into the language of variational path optimization. In fact, as it will be described in the coming subsections, it is possible to think of these restrictions as hypersurfaces in $\mathbb{R}^6$. One last remark before plunging into the details, would be regarding the design of the pentapod. In the theorems to come within this section, it is not necessary to restrict ourselves to \emph{simple pentapods} since they are valid for \emph{general linear pentapods} in which the base anchors belong to $\mathbb{R}^3$. Hence, the findings in this section are more general and can be considered for a more general framework as well.
\vspace*{3mm}
\subsection{Prismatic Joint Limits} In the position space a point on the prismatic extendible part can be located within two spheres with radii $\rho_{min}$ and $\rho_{max}$. Considering this setting in pose space $\mathbb{R}^6$ results the following theorem:
\begin{theorem}
Assume a ``general linear pentapod" with minimum possible prismatic length $\rho_{min} > 0$ and maximum $\rho_{max}$, then the set of all feasible poses with $\rho_{min}\leq \rho\leq \rho_{max}$ forms a ``smooth hyperquadric" in $\mathbb{R}^6$.  
\label{prismatic:limit}
\end{theorem}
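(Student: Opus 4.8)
The plan is to express the squared length of a single prismatic leg as a polynomial in the pose coordinates $u=(u_1,\dots,u_6)\in\mathbb{R}^6$, to verify that this polynomial has total degree two (so each of its level sets is a hyperquadric), and then to establish smoothness by the regular value theorem. First I would write the $i$-th platform anchor point as $m_i=\overrightarrow{p}+r_i\overrightarrow{i}$ with $\overrightarrow{i}=(u_1,u_2,u_3)$ and $\overrightarrow{p}=(u_4,u_5,u_6)$, and the corresponding base anchor as $M_i=(x_i,y_i,0)\in\mathbb{R}^3$, exactly as in the introduction. The prismatic length of that leg is $\rho=\|m_i-M_i\|$, so the poses realizing a prescribed length $\rho$ form the locus $\{u\in\mathbb{R}^6:F(u)=\rho^2\}$ with $F(u):=\|m_i-M_i\|^2$.

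Next I would expand
\begin{equation*}
F(u)=(u_4+r_iu_1-x_i)^2+(u_5+r_iu_2-y_i)^2+(u_6+r_iu_3)^2 ,
\end{equation*}
which is manifestly a quadratic polynomial, hence $\{F=\rho^2\}$ is a hyperquadric in $\mathbb{R}^6$. To see its shape, I would introduce the affine change of coordinates $v_k:=u_{3+k}+r_iu_k-c_k$ for $k=1,2,3$, where $(c_1,c_2,c_3)=(x_i,y_i,0)$, completed to an affine isomorphism of $\mathbb{R}^6$ by retaining $u_1,u_2,u_3$. In these coordinates $F=v_1^2+v_2^2+v_3^2$, so the level set becomes the generalized cylinder $\{v_1^2+v_2^2+v_3^2=\rho^2\}\cong S^2_\rho\times\mathbb{R}^3$, a $2$-sphere of radius $\rho$ times a $3$-dimensional fibre. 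This already displays the locus as a (degenerate) hyperquadric whose associated quadratic form has rank $3$.

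For smoothness I would apply the regular value theorem to $F$. A direct computation gives $\nabla F=2\,(r_iv_1,\,r_iv_2,\,r_iv_3,\,v_1,\,v_2,\,v_3)$, which vanishes exactly when $v_1=v_2=v_3=0$, i.e.\ precisely when $F=0$ (the leg degenerates to $m_i=M_i$). Therefore every value $\rho^2>0$ is a regular value of $F$, and $\{F=\rho^2\}$ is a smooth hypersurface of $\mathbb{R}^6$; equivalently $S^2_\rho$ is smooth for $\rho>0$. Since the hypothesis $\rho_{min}>0$ keeps every admissible $\rho\in[\rho_{min},\rho_{max}]$ bounded away from $0$, both bounding quadrics $\{F=\rho_{min}^2\}$ and $\{F=\rho_{max}^2\}$ are smooth, and the feasible region $\{\rho_{min}^2\le F\le\rho_{max}^2\}$ is the shell foliated by the smooth hyperquadrics $\{F=\rho^2\}$, as claimed.

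The one delicate point, and the step I expect to require the most care, is that the quadratic form of $F$ is \emph{degenerate} (rank $3$), so smoothness cannot be inferred from nondegeneracy of the form; the singular locus of the homogeneous quadric is exactly the $3$-plane $v_1=v_2=v_3=0$, and it is the strict positivity $\rho_{min}>0$ — equivalently the fact that this apex corresponds to $\rho=0$ — that removes all singular points from the feasible range. I would also make sure the whole argument uses only $m_i=\overrightarrow{p}+r_i\overrightarrow{i}$ and $M_i\in\mathbb{R}^3$, without invoking any architectural simplification of simple pentapods, so that the conclusion holds for \emph{general linear pentapods} as asserted. The coordinate change and the gradient computation are routine; the substance of the proof is recognizing the rank-$3$ degeneracy and isolating the excluded apex $\rho=0$.
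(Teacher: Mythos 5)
Your proposal is correct and takes essentially the same approach as the paper: write the squared leg length as a quadratic polynomial $f_p$ in the pose variables $(u_1,\dots,u_6)$, observe that its zero set is a hyperquadric, and note that $\nabla f_p$ vanishes on the level set only when $\rho=0$, so the hypothesis $\rho_{min}>0$ guarantees smoothness — your extra steps (the affine change of coordinates, the rank-$3$ analysis, the explicit $S^2_\rho\times\mathbb{R}^3$ cylinder structure) are elaborations of the same argument. One cosmetic point: the paper keeps the base anchor as $(x_i,y_i,z_i)$ so that the claim covers general (possibly non-planar) linear pentapods, whereas your expansion sets $z_i=0$; as you note yourself, the computation is unchanged with the $z_i$ term restored.
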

\begin{proof}
Writing down the equations of the sphere with the radius $\rho_{min}\leq\rho\leq\rho_{max}$ around a base spherical joint with coordinates $(x_i,y_i,z_i)^{T}$ gives
\begin{equation}
f_p:=\left( r_{{i}}\,u_{{1}}+u_{{4}}-x_{{i}} \right) ^{2}+ \left( r_{{i}}\,u_{
{2}}+u_{{5}}-y_{{i}} \right) ^{2}+ \left( r_{{i}}\,u_{{3}}+u_{{6}}-z_{{i
}} \right) ^{2}-{\rho ^{2}}
\label{prismatic:limit}
\end{equation}
once again by considering $f_p\in\mathbb{C}\left[u_1,u_2,u_3,u_4,u_5,u_6\right]$, Eq.\,\ref{prismatic:limit} describes a quadric hypersurface in pose space $\mathbb{R}^6$. Calculating $\nabla f_{p} = 0$, one observes that it vanishes on $\mathbf{V}\left(f_{p}\right)$ iff $\rho = 0$. Hence $\mathbf{V}\left(f_{p}\right)$ is a smooth algebraic variety.
\end{proof}
\subsection{Base Spherical Joint Limits} 
\begin{theorem}
Assume a ``planar" pentapod with similar base spherical joints as depicted in Fig.\,\ref{fig:design}-left, then the set of all poses resulting in the maximum freedom of movement of the $i$-th leg with respect to the $i$-th base joint limit forms a ``hyperquadric" in $\mathbb{R}^6$. 
\label{theorem:basecones}
\end{theorem}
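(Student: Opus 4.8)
The plan is to mirror the strategy of the prismatic case (Theorem~\ref{prismatic:limit}): exhibit a single polynomial whose zero set is the claimed locus, and read off that it has total degree two in the pose variables $u_1,\dots,u_6$. First I would fix the geometry of the $i$-th leg. Since the base is planar we have $M_i=(x_i,y_i,0)$, while the $i$-th platform anchor point is $m_i=\overrightarrow{p}+r_i\overrightarrow{i}=(u_4+r_iu_1,\,u_5+r_iu_2,\,u_6+r_iu_3)$. Hence the carrier direction of the $i$-th leg is
\begin{equation*}
\overrightarrow{v}_i := m_i-M_i = \bigl(u_4+r_iu_1-x_i,\ u_5+r_iu_2-y_i,\ u_6+r_iu_3\bigr).
\end{equation*}
Because the base joints are similar and each limit cone has its axis orthogonal to the base plane, every cone axis is the common direction $(0,0,1)$.

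The central step is to make ``maximum freedom of movement'' precise. The $i$-th leg is farthest from its base-joint limit exactly when it is centred on the cone, i.e.\ when $\overrightarrow{v}_i$ is parallel to the cone axis $(0,0,1)$; there it can tilt equally in every azimuthal direction before meeting the cone boundary. Equivalently, the component of $\overrightarrow{v}_i$ orthogonal to the axis, namely the first two coordinates of $\overrightarrow{v}_i$, must vanish. I would encode this pair of linear conditions as a single real quadratic by taking the squared length of that planar component, so that the maximum-freedom locus is
\begin{equation*}
\mathbf{V}(f_b),\qquad f_b:=(u_4+r_iu_1-x_i)^2+(u_5+r_iu_2-y_i)^2 .
\end{equation*}
Over the reals the two summands are forced to vanish simultaneously, so $\mathbf{V}(f_b)$ is exactly the set of poses whose $i$-th leg points along the cone axis. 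Regarding $f_b\in\mathbb{C}[u_1,\dots,u_6]$ (just as in Theorem~\ref{prismatic:limit}), the polynomial $f_b$ has total degree two, and therefore its zero set is a hyperquadric in the pose space $\mathbb{R}^6$, which is the assertion.

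Unlike the prismatic case I would not claim smoothness but flag the opposite. A direct computation gives $\nabla f_b = 2\,(r_i A,\,r_i B,\,0,\,A,\,B,\,0)$ with $A:=u_4+r_iu_1-x_i$ and $B:=u_5+r_iu_2-y_i$, which vanishes exactly when $A=B=0$, i.e.\ on the entire real locus of $f_b$. Thus the quadric is degenerate, its real points forming the codimension-two $4$-plane $\mathbf{V}(u_4+r_iu_1-x_i,\,u_5+r_iu_2-y_i)$; this is consistent with the statement, which asks only for a hyperquadric and not a smooth one.

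The only real obstacle here is conceptual rather than computational: one must justify that bundling the two linear alignment conditions into the single quadratic $f_b$ is the correct reading of ``forms a hyperquadric.'' The point is that the relevant object is the complex algebraic hypersurface cut out by $f_b$, whose real trace is the physically meaningful maximum-freedom set, matching the paper's convention of passing to $\mathbb{C}[u_1,\dots,u_6]$ before counting degrees. Once this identification is accepted, the remainder of the argument is merely the degree count above.
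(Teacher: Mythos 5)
Your degree-count mechanics are fine, but you have proved a different statement from the one the paper intends, and the mismatch is substantive. In the paper's proof, ``maximum freedom of movement'' refers to the widest motion the base joint permits: the $i$-th leg may move inside a cone of revolution with vertical axis, vertex at $M_i=(x_i,y_i,0)$ and apex angle $\theta_i$, and the locus in question is the set of poses at which the platform anchor $m_i=(u_4+r_iu_1,\,u_5+r_iu_2,\,u_6+r_iu_3)$ lies on this limiting cone. Writing down the implicit equation of that cone gives
\begin{equation*}
f_{bc} = \left(u_6 + r_i u_3\right)^2 - \cot^2\!\left(\tfrac{\theta_i}{2}\right)\left[\left(u_4 + r_i u_1 - x_i\right)^2 + \left(u_5 + r_i u_2 - y_i\right)^2\right],
\end{equation*}
a genuine quadratic hypersurface in $\mathbb{R}^6$ (cf.\ Eq.\,\ref{base:cone}); the paper further observes that $\nabla f_{bc}$ vanishes on $\mathbf{V}\left(f_{bc}\right)$ only when $m_i$ collapses onto $M_i$, which is unreachable because $\rho_{min}>0$, so the reachable part of this quadric is smooth. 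Your reading --- the leg centred on the cone axis --- yields instead the degenerate quadric $\mathbf{V}\bigl((u_4+r_iu_1-x_i)^2+(u_5+r_iu_2-y_i)^2\bigr)$, whose real points form a codimension-two $4$-plane.

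That yours is the wrong reading is visible from how Theorem\,\ref{theorem:basecones} is used afterwards. The paper immediately declares the resulting hyperquadrics to be the $Q$-varieties of Section\,\ref{sec:joint}: when a breakpoint $p^k$ is at ``its maximum angular limit with respect to base cones'', its update is replaced by the projection $\hat{T}_{p^k}$ onto the tangent space of $Q$, using the normal $N_{p^k}=\left(p^k-\mathfrak{q}^{k}\right)/\|p^k-\mathfrak{q}^{k}\|$, the breach test $\langle u^k-p^k,\ p^k-\mathfrak{q}^k\rangle<0$, and an $\epsilon$-safe zone around $Q$ (Definition\,\ref{def:jointanalysis}). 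This machinery requires $Q$ to be the joint-limit hypersurface: a codimension-one constraint with a well-defined tangent hyperplane along which the end-effector can slide. Your axis-alignment locus is the configuration \emph{farthest} from the joint limit, imposes no constraint on the motion, has real codimension two, and --- as you yourself computed --- is singular at every real point, so $N_{p^k}$ and the tangent-hyperplane projection are not even defined on it; projecting updates onto it would be meaningless for joint-limit handling. The fix is interpretive rather than computational: read ``maximum freedom of movement'' as the maximal angular excursion the joint allows (the boundary of the motion cone), derive $f_{bc}$ as above, and then your degree-two count together with the gradient computation completes the proof exactly in parallel with Theorem\,\ref{prismatic:limit}.
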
 
\begin{proof}
The base spherical joints located on the plane allow motions inside a cone of revolution with vertical axis. The implicit equation of such a cone with \emph{apix angle} $\theta$ and base anchor coordinates $(x_i,y_i,0)^T$ is:
\begin{equation}
f_{bc} = \left({u_6 + r_i u_3}\right)^{2} - \cot ^2 \left( \frac{\theta_{{i}}}{2} \right) \left[ \left( u_4 + r_i u_1 - x_{i} \right) ^{2} + \left( u_5 + r_i u_2 - y_{{i}} \right) ^{2}\right].
\label{base:cone}
\end{equation}
One may think of Eq.\,\ref{base:cone} as a polynomial in $\mathbb{C}\left[u_1,u_2,u_3,u_4,u_5,u_6\right]$. Hence, the $i$-th base cone with an apix limit of $\theta_i$ forms a hyperquadric in $\mathbb{R}^6$. Calculating $\nabla f_{bc} = 0$, one observes that it vanishes on $\mathbf{V}\left(f_{bc}\right)$ iff the platform anchor point collapse with its corresponding base anchor point. It is noteworthy that due to the fact that $\rho_{min} \neq 0$ such a collapse is unreachable.
\end{proof}
Now, the idea is to implement new terms into the cost function in such a way that if a pentapod's leg is at
\vspace*{1mm}
\begin{itemize}
\item[$\bullet$] its maximum/minimum extent,
\item[$\bullet$] its maximum angular limit with respect to base cones,
\end{itemize}
\vspace*{1mm}
the algorithm allows a slide of the end-effector at the maximum/minimum extent or angular limit. Geometrically, this means that if a breakpoint $p^k$ is located on one of the hyperquadrics (from now on called $Q$-variety), mentioned in Theorems\,\ref{prismatic:limit}/\ref{theorem:basecones}, in such a way that if the update of $p^k$ is going out of the Q-variety then the algorithm permits an update of it on $T_{p^k}\left( Q\right)$. Naturally, due to practical reasons, the algorithm must consider such breakpoint already located on $Q$ if the breakpoint is closer than a certain positive real value $\epsilon$ to it (cf. Fig.\,\ref{fig:design}-right). Hence in such a terminology the projection on the tangent will happen only if $\langle u^k - p^k, p^k - \mathfrak{q}^k \rangle < 0$, where $\mathfrak{q}^k$ is the corresponding closest pedal to $p^k$ on Q-variety.
\begin{definition} 
Assume a breakpoint $p^k$ of a piecewise smooth path in $\mathbb{R}^6$ and a $Q$-variety, as mentioned in Theorems\,\ref{prismatic:limit}/\ref{theorem:basecones}, such that $\mathfrak{d}\left(p^k, Q\right) <\epsilon$ where $\epsilon\in\mathbb{R}^{+}$. If $u^{k}$ is the corresponding point to $p^k$ after variation and if $\mathfrak{q}^{k}$ is the orthogonal projection of $p^k$ on $Q$ then define
\begin{equation}
\hat{T}_{p^k} := (u^k - p^k) - \langle (u^k - p^k) , N_{p^k}\rangle\,N_{p^k},
\label{eq:tangenttoQ}
\end{equation}
where $N_{p^k}:= \left(p^k-\mathfrak{q}^{k}\right)/\|p^k-\mathfrak{q}^{k}\|$. Then the $k$-th breakpoint's update should be replaced by $\hat{T}_{p^k}$. 
\label{def:jointanalysis}
\end{definition} 
\begin{remark}
\label{remark:intersection11}
In the variational path optimization process it often happens that a breakpoint $p^k$ breaches the $\epsilon$ vicinity of more than one $Q$-variety (e.g. by almost reaching the maximum extent of more than one leg). In such a situation the vector $(u^k - p^k)$ should be projected on the intersection of the corresponding tangent spaces. 
\end{remark}
\section{Variational Path Optimization Algorithm's Manual}
\label{sec:algorithmdetails}
This section represents a short manual of using the variational path optimization algorithm. The manual is accompanied by an algorithm's flowchart given in Appendix\,\ref{appendix:C}.\\
\subsection{Input}
\label{sec:input}
The algorithm will ask for two main sets of inputs namely, \emph{design parameters} and \emph{optimization parameters}.
\subsubsection{Design parameters} These variables define the architectural structure of the pentapod and hence its singularity variety. These variables of $\mathbf{r}$, $\mathbf{X}$ and $\mathbf{Y}$ stand as the defining parameters of the base and platform anchor points while the parameters $\alpha$ and $\beta$ describe either the pattern of platform anchor points (in the LP-case) or the co-linearity of certain number of base anchor points (in LO-cases). In a more general terminology, the design parameters inform the algorithm on building up the geometric obstacle (i.e. $\Sigma$-variety).
\subsubsection{Optimization parameters} These variables globally control the variations of the initial curve. These parameters are read as follows:
\vspace*{1mm}
\begin{itemize}
   \item[$\bullet$] $\mathsf{n}$:  the number of breakpoints required for the piecewise smooth curve,
   \item[$\bullet$] $\lambda$:     the geodesic weight, 
   \item[$\bullet$] $\eta$:        the bending weight,
   \item[$\bullet$] $\mathsf{growth}$:  is a percentage value indicating the maximum and minimum permitted variation of the curve at each iteration. Experimentally, it is fixed to be $\pm5\%$.
\end{itemize}  
\subsection{Process I}
\label{process:1}
The main task of \emph{process I} is to create a data collection of the inputs. In fact, these are the calculations that the algorithm needs to do to have an initial understanding of the situation of the initial path and the singularity variety (obstacle). This process is done only \emph{once} per run. During this process the manipulator will not perform any action and hence the \emph{clock} will not \emph{tic}.\\
\begin{itemize}
\item[\textbf{1.}] Retrieving the data on the given singularity-free initial path within the joint limits.\\
\item[\textbf{2, 3.}] The algorithm evaluates the situation of the initial curve with respect to the $\Sigma$-variety (obstacle) by obtaining the corresponding pedal points on the $\Sigma$-variety (obstacle). This act is done in the fashion of the following consecutive steps:
\vspace{1mm} 
\begin{itemize}
\item[$\bullet$] finding the real pedal points (cf. Section\,\ref{section:pedal}), 
\item[$\bullet$] finding distances corresponding to the real pedal points,
\item[$\bullet$] sorting the real pedal points with respect to their distance in such a way that the closest pedal point is labelled as the first.
\end{itemize}
\vspace*{1mm}
\item[\textbf{4.}] The algorithm calculates the minimal singularity-free cover by including/excluding breakpoints (\emph{upon user's request}).
\item[\textbf{5.}] At this point, the algorithm runs a check on joint rate limits. Additionally, the safe zone breaches are flagged (\emph{upon user's request}).
\item[\textbf{6.}] The objective function is evaluated. In the coming iterations one must expect the monotonic descent of the objective function per iteration till convergence at a possible local minimum.
\subsection{Decision I}
\label{decision:1}
Though \emph{Decision I} could be written as a \emph{while loop}, due to imposing more control over the outcomes by the user (i.e. user can call a result before the final convergence) it is presented as a \emph{for loop} (cf.\ Appendix\,\ref{appendix:C}). Additionally, by the start of \textit{Decision I}, the clock \emph{ticks} to measure the \emph{elapsed time}.   
\subsection{Process II}
\label{process:2}
The core of the algorithm is located within the \emph{Process II}.
\vspace*{3mm}
\begin{itemize}
\item[\textbf{1.}] By solving the linear system, the preferred direction $u-p$ is obtained. 
\vspace*{1mm}
\item[\textbf{2.}] The algorithm monitors whether the safe zone of $Q$-varieties is breached or not. If the answer is positive then the update of breakpoints responsible for the breach is done in the fashion of Eq.\,\ref{eq:tangenttoQ} (cf. Remark\,\ref{remark:intersection11}).
\vspace*{1mm}
\item[\textbf{3, 4, 5.}] Computing the step size as explained in Section\,\ref{sec:stepsize}. 
\vspace*{1mm}
\item[\textbf{6.}] Having the step size, the curve's new pose is updated as follows:
\begin{footnotesize}
\begin{lstlisting}[backgroundcolor = \color{lightgray},
                   language = Matlab,
                   xleftmargin = 0cm,
                   framexleftmargin = 1em]
up(2:n-1,:) = p(2:n-1,:) + (step_size) * (u(2:n-1,:));   
\end{lstlisting}
\end{footnotesize}
\vspace*{1mm}
\item[\textbf{7, 8.}] Back-projection into $\Gamma$ as explained in Section\,\ref{sec:projection:cylinder}. 
\vspace*{1mm}
\item[\textbf{9.}] As we project the result into $\Gamma$ there is no guarantee for the minimal singularity-free cover to remain intact. Hence, based on the two algorithms described in Section\,\ref{sec:finite:cover}, the inclusion and exclusion should be redone.
\vspace*{1mm}
\item[\textbf{10.}] The cost function is calculated for the current iteration label $i+1$.
\end{itemize}
\vspace*{3mm}
\end{itemize}
\subsection{Decision II}
\label{decision:2}
The new cost function, $\mathsf{cost\_ function (i+1)}$ is compared with the previous one, namely $\mathsf{cost\_function (i)}$. Since by the gradient descent it is expected of the cost function per iteration to decrease, in \emph{Decision II} a \emph{while loop} is commenced. As long as the cost function is not descending and the value of step size is not \emph{zero} (this can be changed by user to a small number such as $10^{-6}$ for being real-time preservation) the algorithm executes the \emph{while loop}.
\subsection{Process III}
\label{Process:3}
At each iteration of the \emph{while loop} the algorithm halves the step size and reruns \emph{Process II}. This gives the opportunity for the preferred direction to find the proper direction to a local minimum.
\subsection{Output}
\label{output}
The coordinates of the \emph{final curve}, the plot of the optimized motion and the \emph{objective function per iteration diagram} are printed.
\section{Results \& Discussions}
\label{sec:results}
Finally, the results are demonstrated in the form of the following two example. 
\subsection{Example}
\label{sec:example1}
\begin{figure}[t!] 
\begin{center}   
\includegraphics[clip,height = 54 mm]{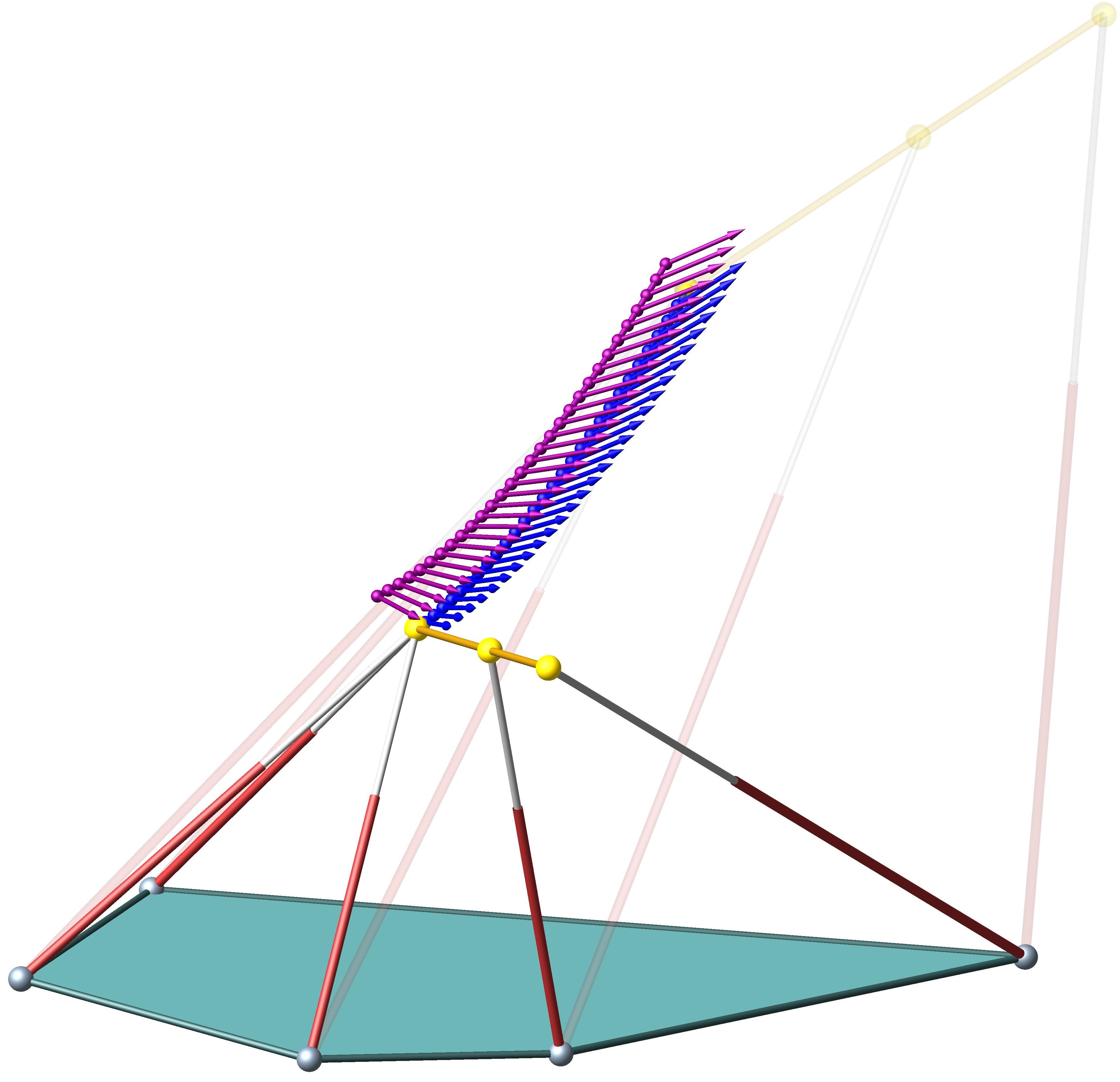}
\hspace*{10 mm}
\includegraphics[height = 54 mm]{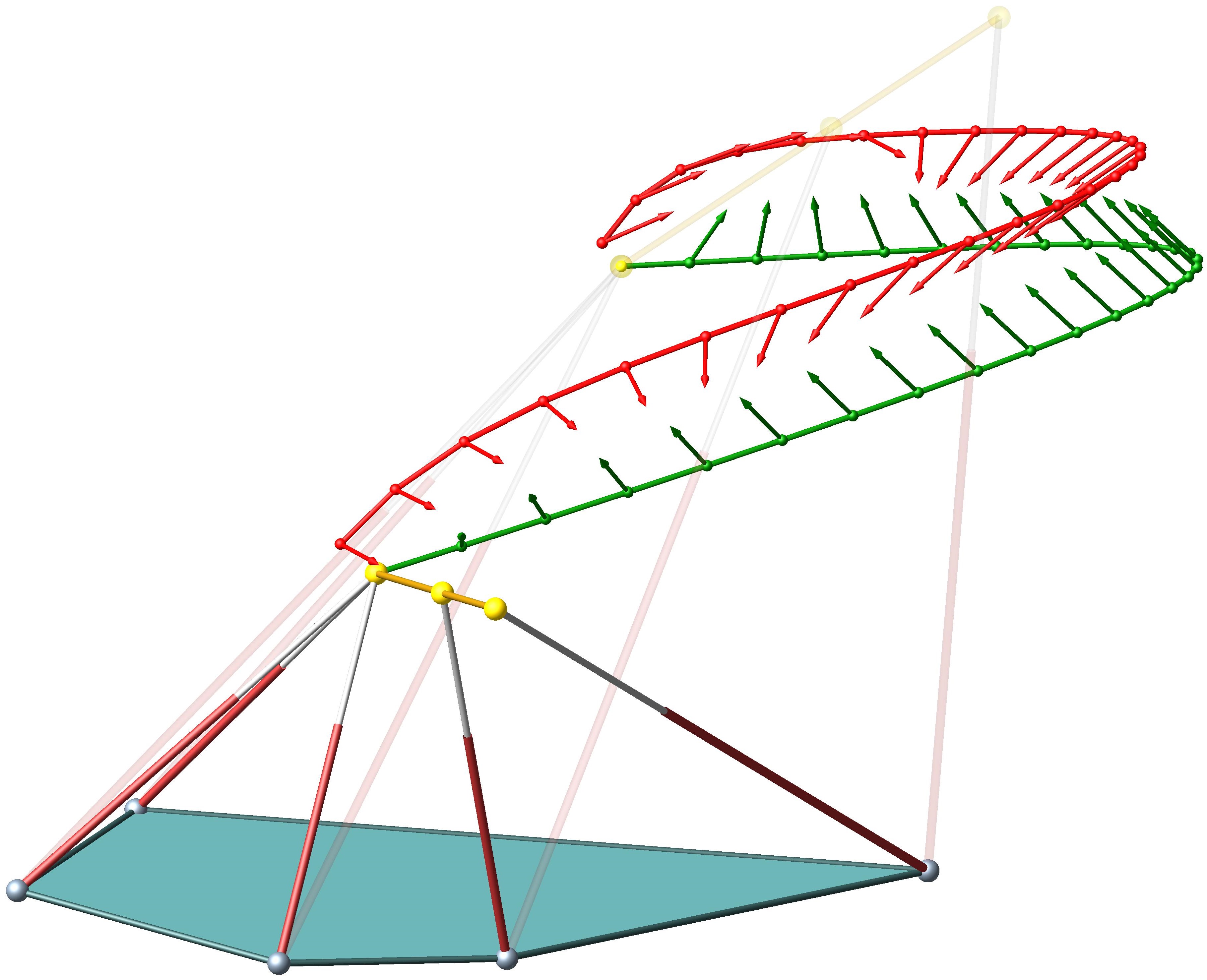}
\vspace*{1 mm}
\begin{overpic}[height = 54 mm]{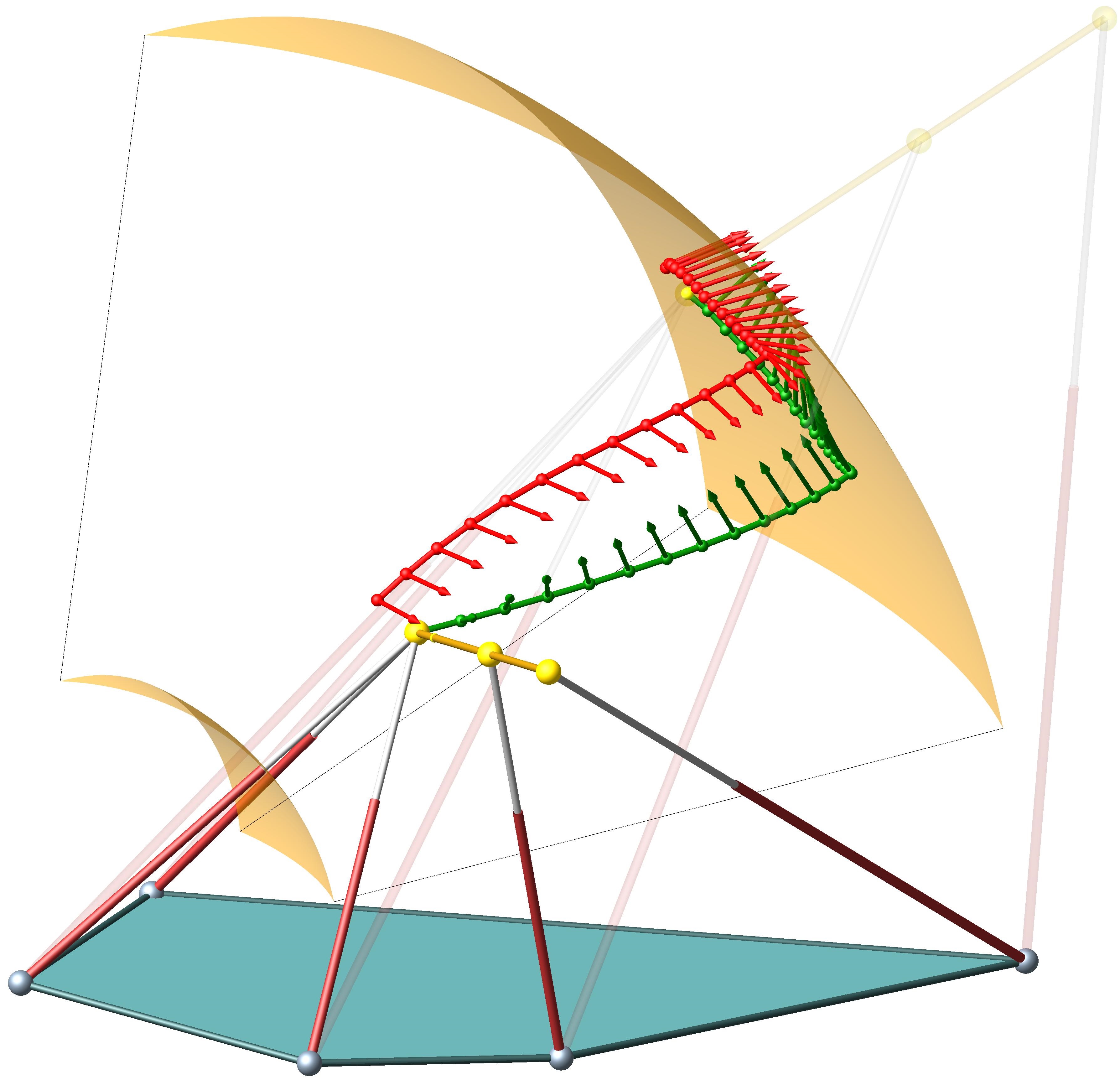}
\begin{small}
\put(-9,20){\footnotesize \it 1st leg}
\end{small}     
\end{overpic}
\hspace*{10 mm}
\begin{overpic}[height = 54 mm]{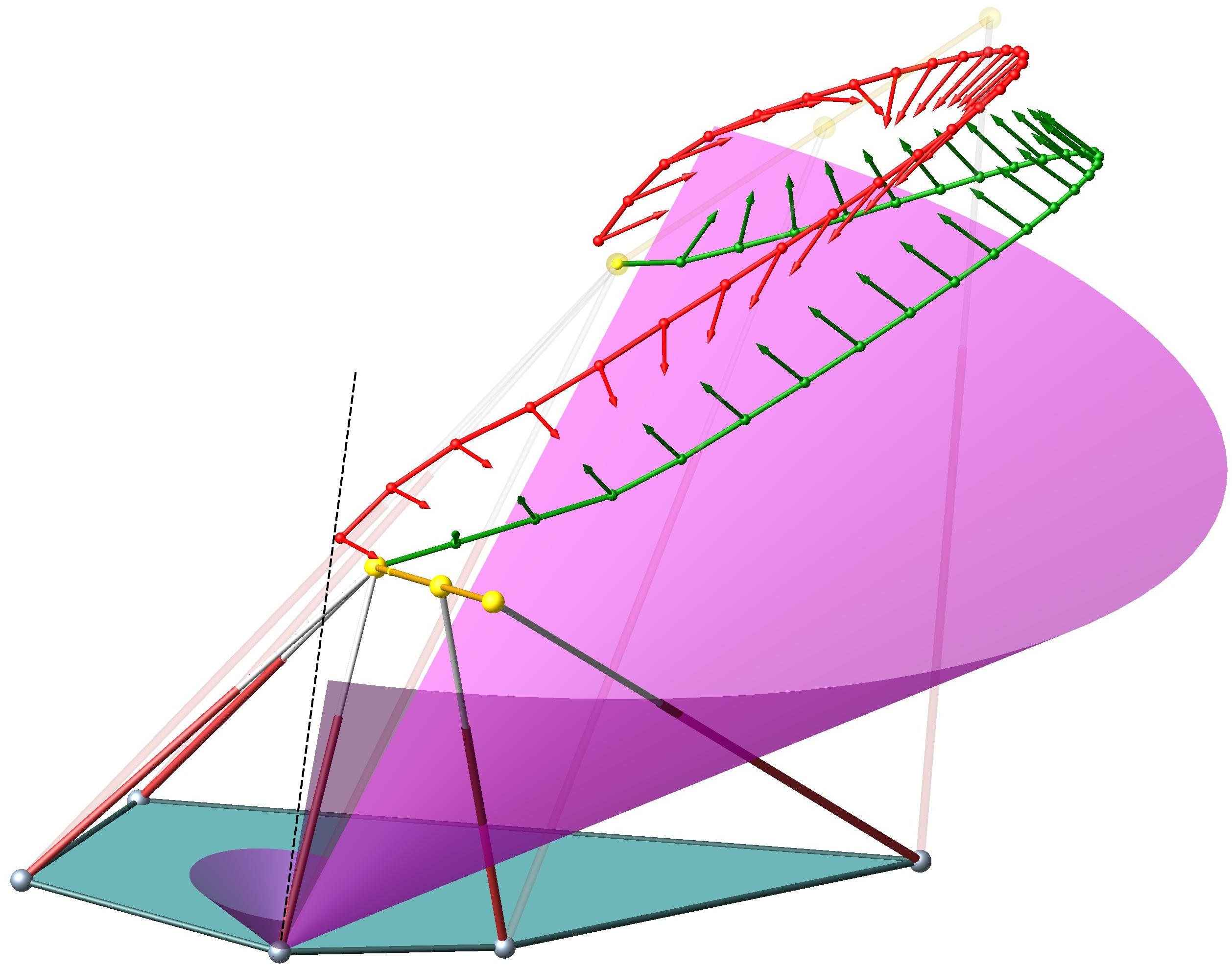}
\begin{small}
%
\put(80,40){\footnotesize \it 2nd cone}
\end{small}     
\end{overpic}
\caption{ Illustration of the paths in $\mathbb{R}^6$ and their corresponding pedals as vector fields along paths in $\mathbb{R}^3$.
Top-left: Initial path (blue) and its corresponding pedals (purple),
Top-right: Final path (green) and its corresponding pedals (red) (\color{blue}blue\color{black}\ objective curve),
Bottom-left: Modification of the final path under 1st prismatic leg's safe-zone breach (\color{cyan}cyan\color{black}\ objective curve), two golden sphere's portions depict the minimum contraction and maximum extension of the first leg,
Bottom-right: Modification of final path under 2nd base cone's safe-zone breach (\color{magenta}magenta\color{black}\ objective curve).
}
\label{fig:result1}
\end{center}
\end{figure}  
%
%
Consider a \emph{simple pentapod} of the 3rd-LO type (cf. Fig.\,\ref{fig:LO}-right) with the following architecture matrix (cf. Eq.\,\ref{BorrasMatrix}):
\begin{equation}
\left( \begin {array}{cccc} 
r_2 &  x_2 &  y_2  & z_2 \\ 
r_3 &  x_3 &  y_3  & z_3 \\ 
r_4 &  x_4 &  y_4  & z_4 \\ 
r_5 &  x_5 &  y_5  & z_5  
\end {array} \right)
=\left( \begin {array}{cccc} 
0 &  5 &  0    & 0 \\ 
0 &  0 &  5    & 0 \\ 
5 &  8 &  3    & 0 \\ 
9 & 12 &  12   & 0  
\end {array} \right).
\label{Arch:num:2}
\end{equation}
Additionally, the second base cone's apex angle (resembling the physical limits of base spherical joints) is set at $108^{\circ}$ while the range of the first leg's prismatic joint varies in the closed interval $\left[ 5.1, 16 \right]$.
The initial singularity-free path is given by the curve
\begin{gather}
\alpha : \left[ 2, 5 \right] \longrightarrow \Gamma \subset \mathbb{R}^6,\\
x \longmapsto\left( \sin\left(\theta\right)\cos\left(\phi\right), \sin\left(\theta\right)\sin\left(\phi\right), \cos\left(\theta\right) , \frac{x + 10}{3}, \frac{x^2 + 10}{3}, \frac{x^3}{30} + 5.333\right),
\end{gather}
where
\begin{equation}
\left(\begin{array}{c}
\theta \\
\phi
\end{array}\right)
=
\frac{5 - x}{3}\,
\left(\begin{array}{c}
0.4\,\pi \\
6.8\,\pi
\end{array}\right)
+
\frac{x - 2}{3}\,
\left(\begin{array}{c}
0.25\,\pi \\
2\,\pi
\end{array}\right).
\end{equation}
The illustrated cases are computed for $30$ breakpoints with the geodesic weight $\lambda = 0.001$ and the bending weight $\eta = 0.05$. Furthermore, the curve's growth is given by $\mathsf{growth} = 5\%$ and safe zone's vicinity is given by $\epsilon = 0.4$ (cf. Definition\,\ref{def:jointanalysis}).\\
Finally, the development of the motion curve are depicted in Fig.\,\ref{fig:result1} and the geometric details of the final optimized motion is given in Table\,\ref{table:results} while Fig.\,\ref{fig:graf}-left shows the descent of the related objective functions per iteration. 
%
\begin{figure}[h!] 
\begin{center}   
  \begin{overpic}[height=50mm, width = 75 mm]{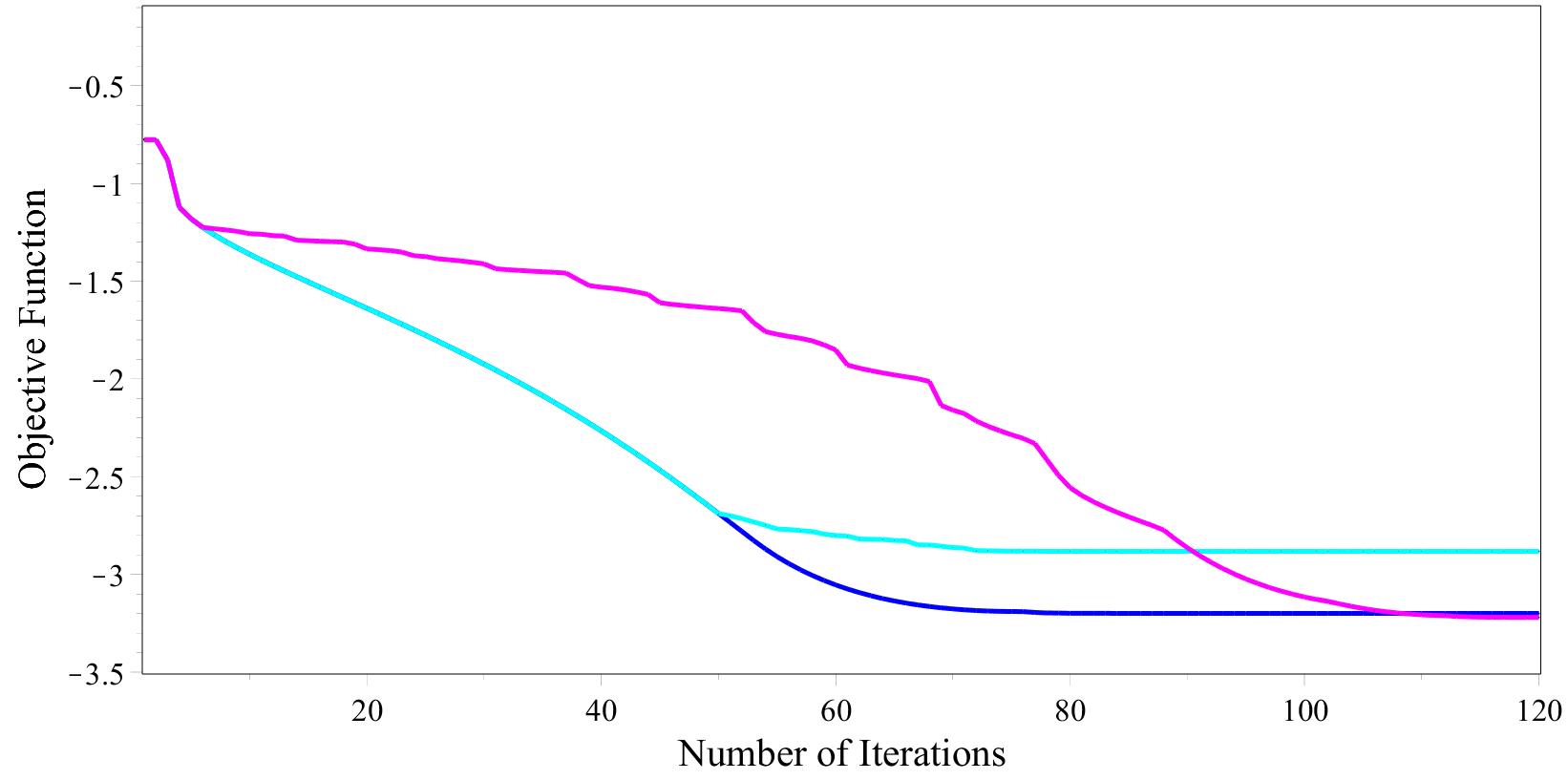}
  \begin{small}
  %
  \end{small}     
  \end{overpic}
  \hspace*{0mm}
  \begin{overpic}[height=50mm, width = 75 mm]{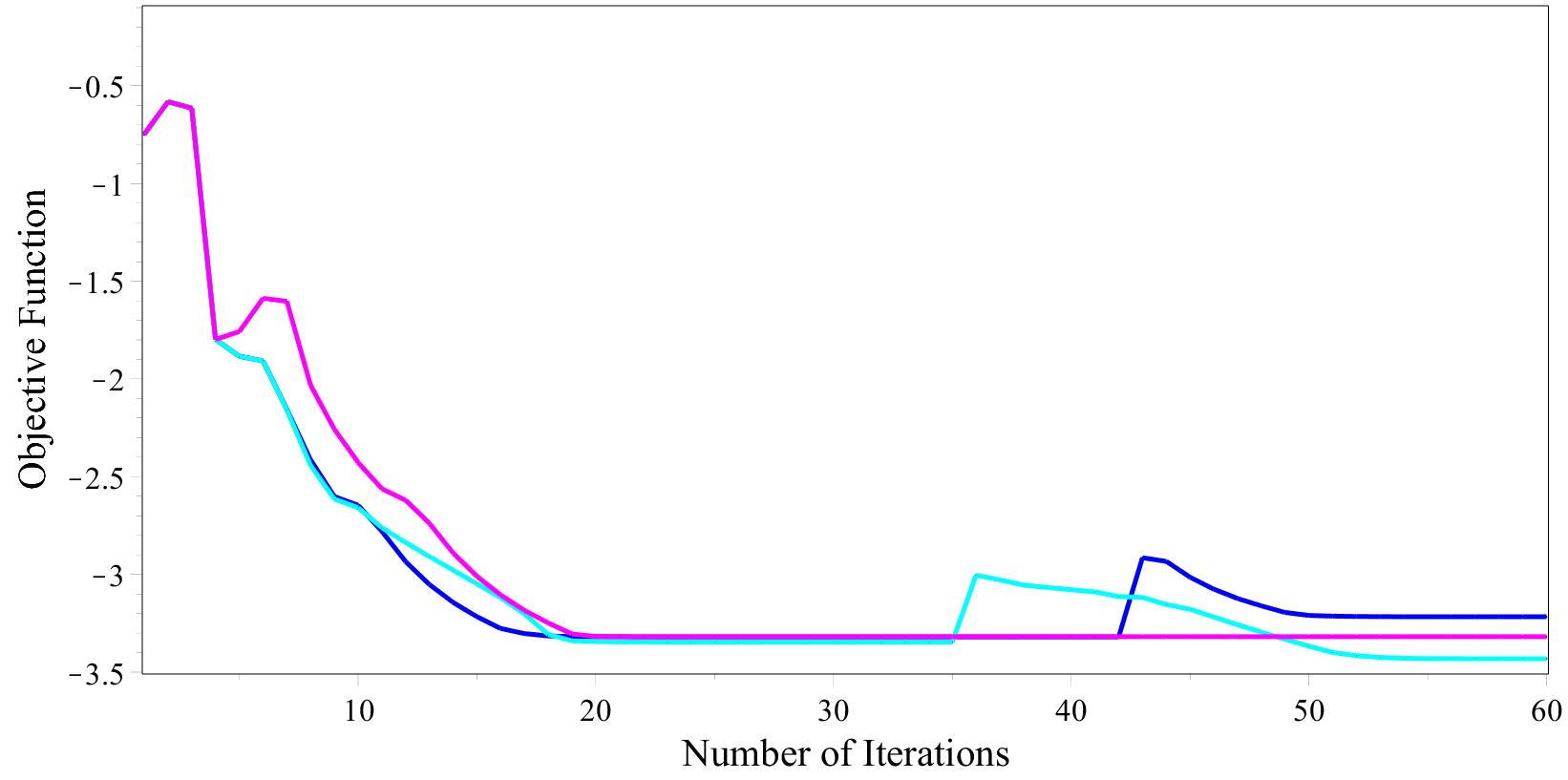}
  \begin{small}
  %
  \end{small}     
  \end{overpic}
  \caption{Illustration of the objective curves corresponding to motions in Fig.\,\ref{fig:result1} (left) and Fig.\,\ref{fig:ex} (right). Note that the sharp changes of the objective functions are due to the inclusion/exclusion of breakpoints (right). }
	\label{fig:graf}
\end{center}
\end{figure}   
\begin{table}[h!]
\centering
\begin{small}
{
 \begin{tabular}{||c |c| c| c| c||} 
 \hline 
                     & Top left  & Top right   & Bottom left   & Bottom right   \\ [0.5ex]
 \hline\hline
 Length              & 11.1822   & 26.5303     & 14.3375       & 23.8307        \\ [0.5ex]
 \hline
 Total curvature     & 0.2379    & 5.3926      & 1.9030        & 4.8903         \\ [0.5ex]
 \hline
 Elapsed time        & -         & 1.6314 s    & 3.9546 s      & 6.7991 s     \\ [0.5ex]
 \hline
\end{tabular}} \\[1ex]
\end{small}
\vspace{0.5	mm}
\caption{Length and total curvature are computed for the final curve in $\mathbb{R}^6$ with respect to ``object oriented metric" while the elapsed time gives simply the amount of the passed time for the algorithm to find the final curve.}
\label{table:results}
\end{table}
\subsection{Example}
Preserving the architectural and optimization parameters mentioned in Example\,\ref{sec:example1} and allowing the minimal singularity-free cover to take place, the result will be as depicted in Fig.\,\ref{fig:ex}. It is noteworthy that the final number of breakpoints will be 6. Finally, the geometric details of the motion is found in Table\,\ref{table:results:ex} and the related objective function per iteration descent in Fig.\,\ref{fig:graf}-right. The algorithm used to produce these results is a \textsf{Matlab} first implementation and can be subject to further improvements, especially through recoding in \textsf{C++}.
\begin{figure}[t] 
\begin{center}   
  \begin{overpic}[height=45mm]{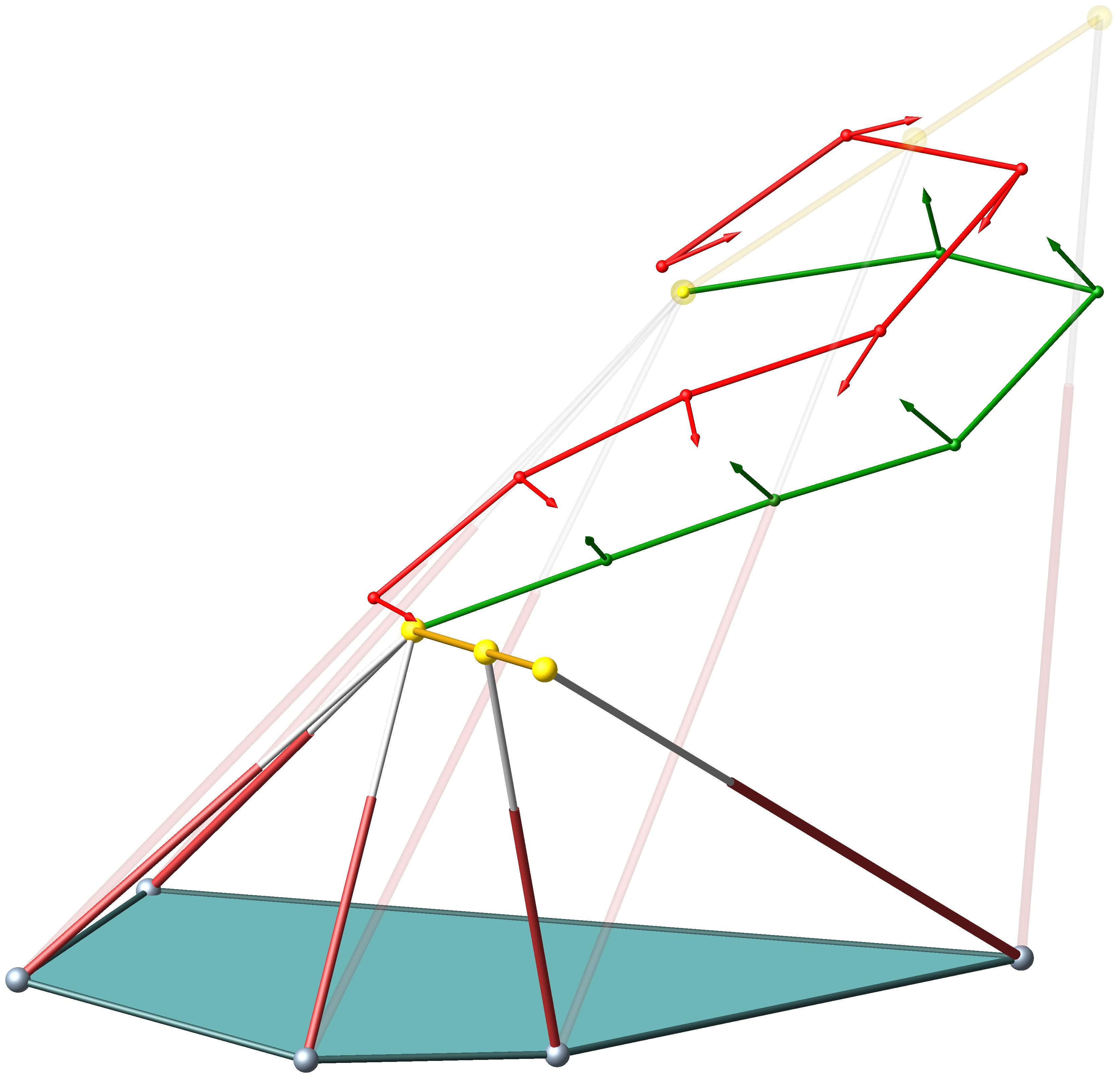}
  \end{overpic}
  \hspace*{0mm}
  \begin{overpic}[height=45mm]{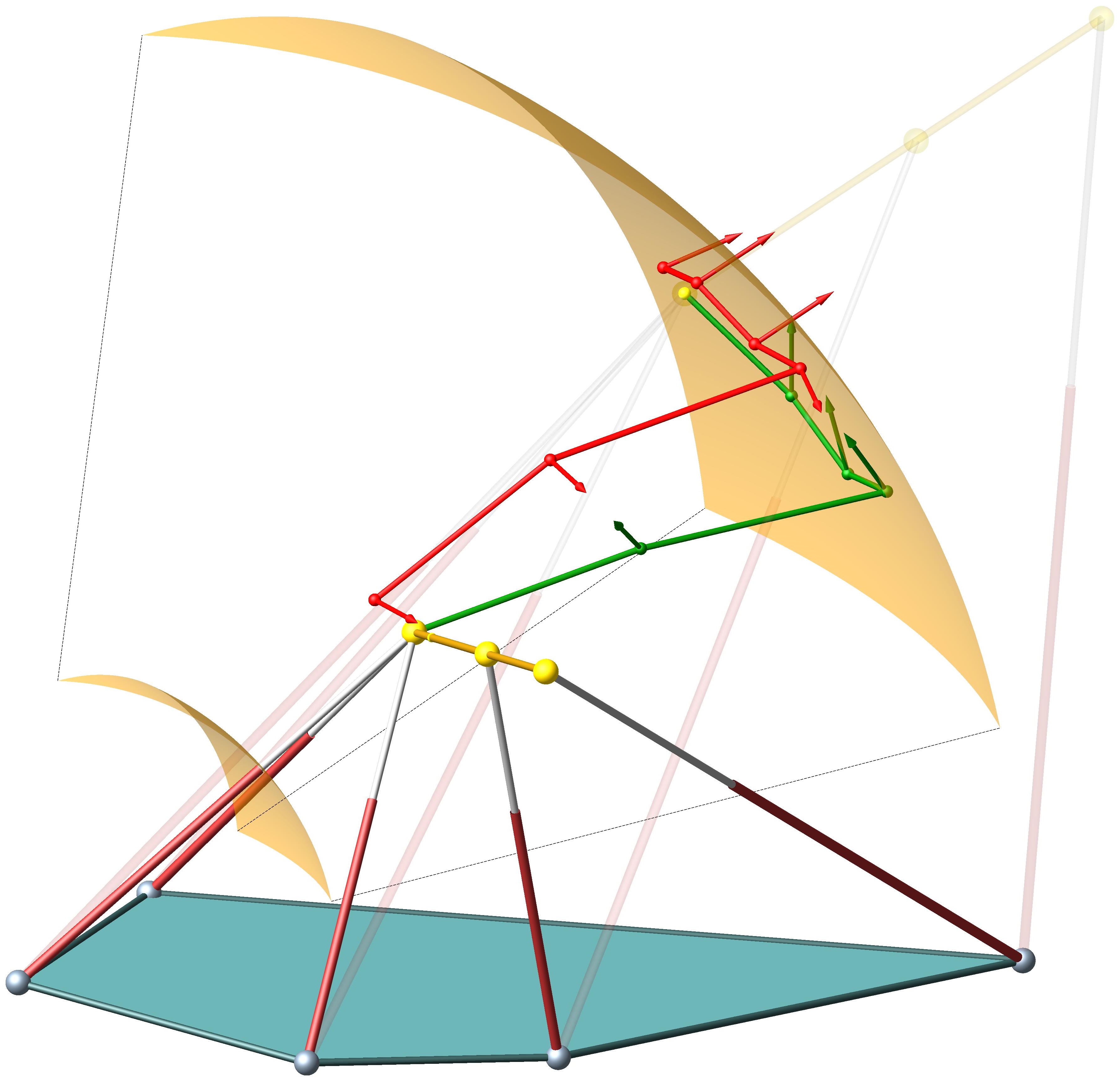}
  \end{overpic}
    \hspace*{0mm}
  \begin{overpic}[height=45mm]{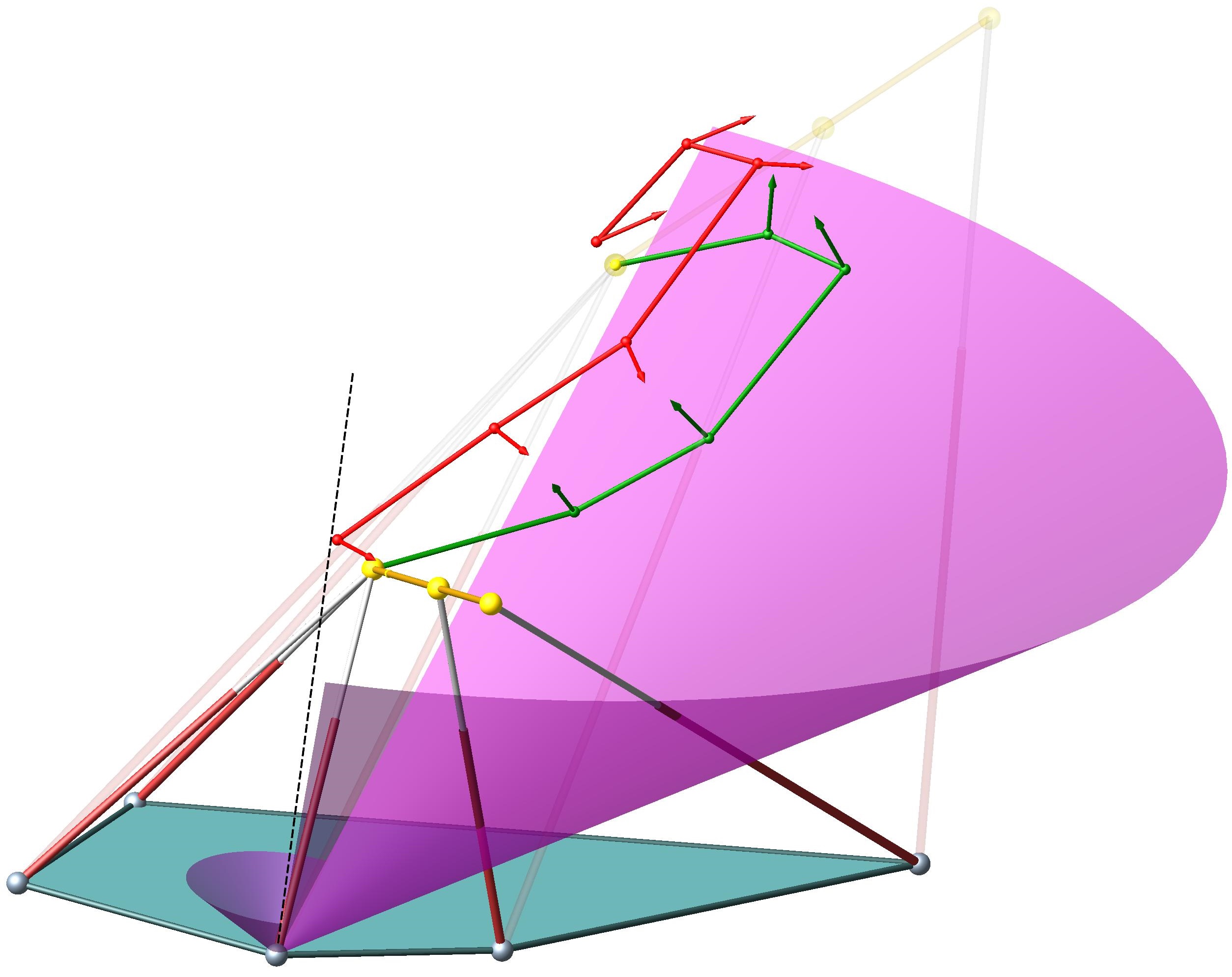}     
  \end{overpic}
  \caption{Illustration of the vector field of motions with finite singularity-free cover (cf. Section\,\ref{sec:finite:cover}). Once again the corresponding objective functions are \color{blue}blue\color{black}\ (left), \color{cyan}cyan\color{black}\ (middle) and \color{magenta}magenta\color{black}\ (right).}
	\label{fig:ex}
\end{center}
\end{figure}   
\begin{table}[h!]
\centering
\begin{small}
{
 \begin{tabular}{||c |c| c| c||} 
 \hline 
                               & Left         & Middle        & Right       \\ [0.5ex]
 \hline\hline
 Length                        & 20.6106      & 15.7346       & 16.8231     \\ [0.5ex]
 \hline
 Total curvature               & 12.9031      & 10.5039       & 11.7041     \\ [0.5ex]
 \hline
 Elapsed time                  & 2.3237 s     & 3.3512 s      & 3.7472 s    \\ [0.5ex]
 \hline
 Final number of breakpoints   & 7            & 6             & 6           \\ [0.5ex]
 \hline
\end{tabular}} \\[1ex]
\end{small}
\vspace{0.5	mm}
\caption{Length and total curvature are computed for the final curve in $\mathbb{R}^6$ with respect to ``object oriented metric", the elapsed time gives the amount of the passed time for the algorithm to find the final curve. Final number of breakpoints indicate the number of remaining breakpoints by the end of optimization process.}
\label{table:results:ex}
\end{table}
\subsection{Conclusion \& Future Research}
We demonstrated that simple pentapods imply a reduction in the number of pedal points in addition to their closed-form coordinates (cf. Section\,\ref{section:pedal}). We studied the geometric properties of simple pentapod's loci (cf. Section\,\ref{sec:algebraicgeometricsetup}).\\
Since the main purpose of the paper was to optimize a given singularity-free path between two fixed poses, we setup a cost function involving energy terms and distance to the $\Sigma$-variety (cf. Section\,\ref{sec:differentialgeometricsetup}). Obtaining the local minimum of the cost function by imploring the gradient descent method resulted in an optimized motion with the following properties: 
\vspace*{1mm}
\begin{itemize}
\item[$\bullet$] increased distance to the $\Sigma$-variety,
\item[$\bullet$] smoothness,
\end{itemize}
\vspace*{1mm}
while considering the extension/contraction limits of prismatic joints and angular limits of the base spherical joints.
Finally, the details of the algorithm along with a flowchart are given in Section\,\ref{sec:algorithmdetails} and Appendix\,\ref{appendix:C}.\\
Though the problem of feasible motions with respect to physical limits of the base spherical joints and prismatic joints yields rather simple surfaces, namely, hyperquadrics in $\mathbb{R}^6$, the problem of \emph{leg collision avoidance} and angular limits of \emph{platform spherical joints} implore a more sophisticated situation as they yield hyperquartics in $\mathbb{R}^6$. Consequently, obtaining pedals on such varieties gets more complex as even the Gr\"{o}bner basis approach does not yield a solution in the general case. One idea to ameliorate this problem is by resorting to a numerical method such as \emph{homotopy continuation method} performed by \textsf{Bertini} \cite{bates2013numerically} which will be subject to a future research. In addition, we plan to investigate the \emph{initial singularity-free path planning} problem in more detail which can be based on the special structure of the $\Sigma$-variety (cf. Theorem\,\ref{theorem:properties}).
\appendix      
\section{Proof of Theorem. \ref{theorem:properties}}
\label{appendix:A}
Since proving the theorem for one of the LP/LO cases can easily be repeated for the other case, without loss of generality, we prove the theorem for the ``LO-case":
\begin{itemize}
\item[a)] Using Lemma\,\ref{lem:LO} and basic properties of algebraic varieties (cf. \cite{cox1992ideals}) results in:
\begin{equation}
\Sigma =
\mathbf{V}(u_6)\,\cup\,\mathbf{V}(\,u_6\,(\alpha\,u_1\,+\,\beta\,u_2)\,-\,u_3\,(\,\alpha\,u_4\,+\,\beta u_5\,-\,1\,)\,) = \Sigma_1 \cup \Sigma_2.
\end{equation}
\item[b)] The set of all singular points of $\Sigma_{2}$ are the solutions of 
$
\langle \frac{\partial f}{\partial u_1} , \frac{\partial f}{\partial u_2}, \frac{\partial f}{\partial u_3}, \frac{\partial f}{\partial u_4}, \frac{\partial f}{\partial u_5}, \frac{\partial f}{\partial u_6}, f \rangle
$
where $f\in\mathbb{C}[u_1,u_2,u_3,u_4,u_5,u_6]$. If $f$ is Eq.\,\ref{LO:sing} then 
\begin{equation}
\Sigma_3 = \mathbf{V}\left(\,u_6,\,1-(\alpha u_4 + \beta u_5),\,u_3,\,\alpha\,u_1+\beta\,u_2\,\right).
\label{eq:singulareqs}
\end{equation}
Considering the following evidently smooth maps 
\begin{gather}
h_1: \mathbb{R}^5\setminus\mathbf{V}\left(\,\alpha\,u_1+\beta\,u_2\,\right)\longrightarrow \mathbb{R}^6,\\
\left(u_1,u_2,u_3,u_4,u_5\right)\longmapsto\left(u_1,u_2,u_3,u_4,u_5,\frac{u_3(\alpha u_4 + \beta u_5 - 1)}{\alpha\,u_1+\beta\,u_2}\right),  
\end{gather}
\begin{gather}
h_2: \mathbb{R}^5\setminus\mathbf{V}\left(\,1-(\alpha u_4 + \beta u_5)\,\right)\longrightarrow \mathbb{R}^6,\\
\left(u_1,u_2,u_4,u_5,u_6\right)\longmapsto\left(u_1,u_2,{\frac {u_{{6}} \left( \alpha\,u_{{1}}+\beta\,u_{{2}} \right) }{\alpha
\,u_{{4}}+\beta\,u_{{5}}-1}},u_4,u_5,u_6\right),  
\end{gather}
\begin{gather}
h_3: \mathbb{R}^5\setminus\mathbf{V}\left(\,u_3\,\right)\longrightarrow \mathbb{R}^6,\\
\left(u_1,u_2,u_3,u_5,u_6\right)\longmapsto\left(u_1,u_2,u_3,{\frac {\alpha\,u_{{1}}u_{{6}}+\beta\,u_{{2}}u_{{6}}-\beta\,u_{{3}}u_{
{5}}+u_{{3}}}{u_{{3}}\alpha}},u_5,u_6\right),  
\end{gather}
\begin{gather}
h_4: \mathbb{R}^5\setminus\mathbf{V}\left(\,u_6\,\right)\longrightarrow \mathbb{R}^6,\\
\left(u_2,u_3,u_4,u_5,u_6\right)\longmapsto\left({\frac {\alpha\,u_{{3}}u_{{4}}-\beta\,u_{{2}}u_{{6}}+\beta\,u_{{3}}u_{
{5}}-u_{{3}}}{u_{{6}}\alpha}},u_2,u_3,u_4,u_5,u_6\right),  
\end{gather}
gives 5-dimensional smooth manifolds $M_{i}:=\mathrm{graph}\left( h_{i}\right)\subset\mathbb{R}^6$ whose intersection with $\Sigma_3$ is empty. Moreover, taking an element in $\Sigma_2$ and noting the structure of $h_i$ domains implies one of the following two possibilities:
\vspace*{1mm}
\begin{itemize}
\item[$\bullet$] if it satisfies Eq.\,\ref{eq:singulareqs} then it belongs to $\Sigma_3$,
\item[$\bullet$] otherwise it belongs to at least one of the $M_i$s.
\end{itemize}
\vspace*{1mm}
Hence, by naming $M := \bigcup_{i = 1}^{4}M_{i}$ we get the required result.
\item[c)] Substituting $u_6 = 0$ in the defining equation of the hypersurface gives
\begin{equation}
\Sigma_1 \cap \Sigma_2 = \mathbf{V}\left(u_3\right) \cup \mathbf{V}(\alpha u_4 + \beta u_5 -1) = \mathcal{A} \cup \mathcal{B},
\label{proof:c}
\end{equation}
which upon observing that hypersurface equation and Eq.\,\ref{proof:c} are two constraints for pose variables or through direct computation (i.e. using \texttt{HilbertDimension} command in \textsf{Maple}) reveals that $\Sigma_1 \cap \Sigma_2$ is a union of two 4-dimensional planes.
\item[d)] Assuming $f_1$ and $f_2$ to be the defining polynomials of $\Sigma_1$ and $\Sigma_2$, to obtain the tangency one has to check the points of $\Sigma_1\cap\Sigma_2$ in which $\nabla f_1$ and $\nabla f_2$ are collinear. Doing so yields $M^{\prime}$ with the following parametrization:
\begin{gather}
\mathbb{R}^3\setminus\mathbf{V}\left(\,\alpha\,v_1+\beta\,v_2\,\right)\longrightarrow \mathbb{R}^6,\\
\left(v_1,v_2,v_3\right)\longmapsto\left(v_1, v_2, 0, \frac{1-\beta\,v_3}{\alpha}, v_3,0\right),  
\end{gather}
for the LO-case which trivially yields a 3-dimensional smooth manifold. 
\item[e)] The parametrization of $\Sigma_3$ for LO-case is:\\
\begin{equation}
\left({v_1},{v_2}\right) \longmapsto \left( -\frac{\beta\,v_1}{\alpha}, v_1, 0, \frac{1 -\beta\,v_2}{\alpha}, v_2, 0\right).
\label{LO:sigma3}
\end{equation}
Using the above parametrization (or through direct manipulation of the corresponding implicit equations) one finds that $\Sigma_3$ is a 2-dimensional plane and a subset of $\Sigma_1\cap\Sigma_2$. Additionally, by observing Eq.\,\ref{proof:c}, one deducts $\langle u_3, \left( \alpha\,u_4 + \beta\,u_5 - 1 \right)\rangle$ includes $\langle u_3\,\left( \alpha\,u_4 + \beta\,u_5 - 1 \right)\rangle$ which results in $\Sigma_3 \subset \mathcal{A} \cap \mathcal{B}$.
\end{itemize} 
\section{Flowchart}
\label{appendix:C}
For downloading the implementation in \textsf{Matlab} plus tools for plotting results in \textsf{Maple} visit \url{http://www.geometrie.tuwien.ac.at/rasoulzadeh/}. The implemented algorithm might be subject to minor updates in future releases.\\
The flowchart is modelled according to the current standard ISO 5807.
Finally, due to the fact that the variational path optimization algorithm, in its presented shape here, can be used for different optimization goals involving a path and an obstacle to avoid, the flowchart is presented as general as possible. The items exclusively related to the variational path optimization of the \emph{simple pentapods} are labelled by $\color{red}\left(\ast\right)\color{black}$. By disregarding these lines one would be able to use the same techniques for other optimization goals such as the optimization of a path with respect to a parabola in $\mathbb{R}^2$ (cf. Fig.\,\ref{fig:areal}) or similar goals in different $\mathbb{R}^n$ spaces.
\newpage
\tikzstyle{startstop} = [rectangle, rounded corners, minimum width = 3cm, minimum height = 1cm, text centered, draw = black, fill = red!30, text width = 3cm]
\tikzstyle{io} = [trapezium, trapezium left angle = 70, trapezium right angle = 110, minimum width = 3cm, minimum height = 1cm, text centered, draw = black, fill = blue!30, text width = 7.5cm]
\tikzstyle{process} = [rectangle, minimum width = 3cm, minimum height = 1cm, text centered, draw = black, fill = orange!30, text width = 7cm ]
\tikzstyle{decision} = [diamond, minimum width = 2cm, minimum height = 1cm, text centered, draw = black, fill = green!30, aspect = 2]
\tikzstyle{arrow} = [thick, ->, >=stealth]
%
\vspace*{5 pt}
\begin{tikzpicture}[node distance = 2cm] 
\node(in1)[io, text width = 7cm]{\vbox {\scriptsize
    {\begin{itemize}
        \item[1.] design parameters: $\mathsf{r}$, $\mathsf{X}$, $\mathsf{Y}$, $\mathsf{\alpha}$, $\mathsf{\beta}$ \color{red}$\left(\ast\right)$\color{black},
        \item[2.] optimization parameters: $\mathsf{initial\_ curve}$, $\mathsf{n}$, $\mathsf{\lambda}$, $\mathsf{\eta}$.
    \end{itemize}}}}; 
\node(start)[startstop, left of = in1, xshift = - 4cm]{\footnotesize start};      
\node(pro1)[process, below of = in1, text width = 10cm, yshift = -0.5 cm]{\vbox {\scriptsize
    {\begin{itemize}
        \item[1.] $\color{cyan}\mathsf{p}\color{black} := \mathsf{initial\_ curve}$, 
        \item[2.] compute $\mathsf{real\_ sorted\_ pedals}\,(\color{cyan}\mathsf{p}\color{black})$, 
        \item[3.] compute $\mathsf{all\_ real\_ distances}\,(\color{cyan}\mathsf{p}\color{black}, \mathsf{real\_sorted\_pedals})$,
        \item[4.] do necessary \textsf{inclusion} or \textsf{exclusion},
        \item[5.] \textsf{joint limit} analysis \color{red}$\left(\ast\right)$\color{black}, 
        \item[6.] compute $\mathsf{cost\_function}\,(1)$.
    \end{itemize}}}};
\node(dec1)[decision, below of = pro1, yshift = -0.8 cm]{\footnotesize \textsf{for} $\mathsf{i =1:number\_of\_iterations}$};
\node(pro2a)[process, below of = dec1, yshift = -1.5 cm, text width = 12cm]{\vbox {\scriptsize
    {\begin{itemize}
        \item[1.] solve linear system,
        \item[2.] \textsf{joint limit} analysis \color{red}$\left(\ast\right)$\color{black},
        \item[3.] compute geodesic growth: $\mathsf{geodesic\_ candidate}$, 
        \item[4.] compute bending growth: $\mathsf{bending\_ candidate}$,
        \item[5.] choose step size: $\mathsf{step\_ size = min(geodesic\_ candidate, \mathsf{bending\_ candidate})}$ ,
        \item[6.] update piecewise smooth curve: $\mathsf{up = \color{cyan}\mathsf{p}\color{black}} + \mathsf{step\_size} * \mathsf{u}$,
        \item[7.] projection into cylinder's tangents \color{red}$\left(\ast\right)$\color{black},
        \item[8.] projection into cylinder \color{red}$\left(\ast\right)$\color{black},
        \item[9.] do necessary \textsf{inclusion} or \textsf{exclusion},
        \item[10.] compute $\mathsf{cost\_ function (i + 1)}$.
    \end{itemize}}}};
\node(dec2)[decision, below of = pro2a, yshift = -1.8 cm]{\scriptsize \textsf{while} $\mathsf{cost\_function\,(i+1) > cost\_function\,(i)}$};
\node(pro2b)[process, below of = dec2, yshift = -1.5 cm, text width = 12cm]{\vbox {\scriptsize
    {\begin{itemize}
        \item[1.] $\mathsf{step\_ size = step\_ size / 2}$
        \item[2.] update piecewise smooth curve: $\mathsf{up = \color{cyan}\mathsf{p}\color{black}} + \mathsf{step\_size} * \mathsf{u}$,
        \item[3.] projection on cylinder's tangents \color{red}$\left(\ast\right)$\color{black},
        \item[4.] projection into cylinder: $\mathsf{projection\_ cylinder (up)}$ \color{red}$\left(\ast\right)$\color{black},
        \item[5.] do necessary \textsf{inclusion} or \textsf{exclusion},
        \item[6.] compute $\mathsf{cost\_ function (i + 1)}$.
    \end{itemize}}}};
\node(out1)[io, below of = pro2b, yshift = -0.2 cm, text width = 2cm]{\footnotesize $\mathsf{final\_ curve := \color{cyan}\mathsf{p}\color{black}}$};
\node(stop)[startstop, right of = out1, yshift = 0 cm, xshift = 3.75cm]{\footnotesize stop};

\draw[thick, arrow](start)--(in1);
\draw[thick, arrow](in1)--(pro1);
\draw[thick, arrow](pro1)--(dec1);
\draw[thick, arrow](dec1)--(pro2a);
\draw[thick, arrow](pro2a)--(dec2);
\draw[thick, arrow](dec2)--(pro2b);
\draw[thick, arrow](pro2b)--(out1);
\draw[thick, arrow](out1)--(stop);

\draw[thick, arrow](dec2)--node[anchor = east, xshift = -0.2cm, yshift = 0.1cm]{$\textsf{True}$}(pro2b);
\draw[thick, ->] (dec2) -- ++(7.5,0) -- ++(0,2.8) -- ++(0,4.5) -> node[xshift = 1 cm, yshift = -7.75 cm, text width=2.5cm]{$\textsf{False}$}(dec1);
\draw[thick, ->] (pro2b) -- ++(-6.5,0) -- ++(0,0) -- ++(0,3.5) -> (dec2);
\draw[thick, ->] (dec1) -- ++(-7.5,0) -- ++(0,-13) -- ++(0,0) -> (out1);

\node (-mu)[left of = in1, xshift = -0.9 cm, yshift = 0.9 cm]{\emph{Input}};
\node (-mu)[left of = pro1, xshift = -2.2 cm, yshift = 1.4 cm]{\emph{Process I}};
\node (-mu)[left of = dec1, xshift = -0.8 cm, yshift = 0.75 cm]{\emph{Decision I}};
\node (-mu)[left of = pro2a, xshift = -3.1 cm, yshift = 2.1 cm]{\emph{Process II}};
\node (-mu)[left of = dec2, xshift = -1.2 cm, yshift = 1.1 cm]{\emph{Decision II}};
\node (-mu)[left of = pro2b, xshift = -3 cm, yshift = 1.5 cm]{\emph{Process III}};
\node (-mu)[left of = out1, xshift = 0.6 cm, yshift = 0.75 cm]{\emph{Output}};
\node (-mu)[left of = dec1, xshift = -1.2 cm, yshift = -0.4 cm]{\textsf{False}};
\node (-mu)[below of = dec1, xshift = -0.8 cm, yshift = 0.6 cm]{\textsf{True}};
\end{tikzpicture}
\newpage
\bibliographystyle{siamplain}
\bibliography{references}
\end{document}